\documentclass[a4paper,11pt]{amsart}

\usepackage[utf8]{inputenc}
\usepackage{mathrsfs}
\usepackage{amsfonts, amssymb, amsmath, amsthm, amsxtra, enumerate}
\usepackage[english]{babel}
\usepackage[active]{srcltx}
\usepackage{multirow}
\usepackage{latexsym}
\usepackage{lscape}
\usepackage{color}
\usepackage{url}
\usepackage[all]{xy}
\usepackage[colorlinks=true,pdfauthor={Ariel Pacetti, Nicolas
Sirolli},pdftitle={Computing ideal classes representatives in
quaternion algebras},pdfkeywords={Quaternion algebras, ideal classes
representatives, Bass orders}]{hyperref}

\newcommand{\diag}{\operatorname{diag}}
\newcommand{\Q}{\mathbb{Q}}
\newcommand{\OO}{\mathcal{O}}

\newcommand{\E}{\mathcal{E}}
\newcommand{\tr}{\operatorname{Tr}}

\newcommand{\pp}{\mathfrak{p}}

\newcommand{\qq}{\mathfrak{q}}
\newcommand{\Kp}{\mathbb{K}_\mathfrak{p}}
\newcommand{\pip}{\pi_{\mathfrak{p}}}
\newcommand{\ida}{\Psi_{\O'}^\O(I)}
\newcommand{\cl}{\mathit{Cl}}

\numberwithin{equation}{section}
\numberwithin{table}{section}
\numberwithin{figure}{section}

\newtheorem*{thm*}{Theorem}
\newtheorem{prop}[equation]{Proposition}
\newtheorem{coro}[equation]{Corollary}

\newtheorem*{conj*}{Conjecture}
\newtheorem{lemma}[equation]{Lemma}
\newtheorem*{thma}{Theorem A}
\newtheorem*{thmb}{Theorem B}

\theoremstyle{remark}
\newtheorem{remark}[equation]{Remark}
\newtheorem{exm}[equation]{Example}
\newtheorem{algo}[equation]{Algorithm}

\theoremstyle{definition}
\newtheorem{defi}[equation]{Definition}

%%% Packages

\usepackage[mathscr]{euscript}
\usepackage{latexsym}

\usepackage[all]{xy}

\usepackage{amssymb}
\usepackage{amsthm}
\usepackage{amsmath}
\usepackage{amsxtra}

%%% Fonts (???)

.tfm
.tfm

%%% Theoremstyles

% \theoremstyle{plain}
% \newtheorem{prop}{Proposition}[section]
% \newtheorem{thm}[prop]{Theorem}
% \newtheorem*{thm*}{Theorem}
% \newtheorem{coro}[prop]{Corollary}
% \newtheorem{scholium}[prop]{Scholium}
% \newtheorem{lemma}[prop]{Lemma}
% \newtheorem{conj}[prop]{Conjecture}
% \newtheorem*{exer}{Exercise}

% 
% 
% \theoremstyle{definition}
% \newtheorem*{defi}{Definition}
% \newtheorem*{notation}{Notation}
% 
% \theoremstyle{remark}
% \newtheorem*{note}{Note}
% \newtheorem*{clm}{Claim}
% \newtheorem*{obs}{Observation}
% \newtheorem*{exm}{Example}
% \newtheorem{remark}[equation]{Remark}
% \newtheorem*{remarks}{Remarks}

%%% Operators

\DeclareMathOperator{\End}{End}

\DeclareMathOperator{\sign}{sign}

%\DeclareMathOperator{\level}{Lev}

%%% Some abreviations (ordenar y explicar, ver cuales no se usan)

% \newcommand{\N}{\mathcal{N}}

\newcommand{\B}{\mathcal B}

\def\ZZ{\mathbb Z}

%\def\HH{\mathbb H}

%%% Some useful macros

\def\<#1>{{\left\langle{#1}\right\rangle}}

%\DeclareMathOperator{\chip}{\chi_p}

%%% General notations :

             % integers
             % integers
                 % localization
\def\Q{{\mathbb Q}}             % rationals
\def\QQ{{\mathbb Q}}             % rationals
                 % localization
             % reals

%%% Sets

%%% Kronecker symbol
             % display
   % text
%\def\kro#1#2{\mathchoice{\dkro{#1}{#2}}{\tkro{#1}{#2}}%
%                        {\tkro{#1}{#2}}{\tkro{#1}{#2}}}
\let\kro\dkro

%%% Alternative fractions

%%% Notations for quaternion algebras :

                     % a quaternion algebra
            % multiplicative group
                 % localization
         % multiplicative group

\def\O{R}           % an order
\def\Op{\O_\pp}                 % localization
\def\Oq{\O_\qq}                 % localization
           % dual, or inverse different
\def\Ol(#1){{\mathop{\O_l}(#1)}}                 % left order
\def\Or(#1){{\mathop{\O_r}(#1)}}                 % right order
\def\Orp(#1){{\mathop{\O_r}(\idp{#1})}}               % localization
\def\Otern(#1){{\mathop{\O^0_r}(\id{a})}}    % ternary lattice

\def\id#1{{\mathfrak{#1}}}      % an ideal
\def\idp#1{{\id{#1}_\id{p}}}         % localization
         % localization
\def\kp{k_{\id{p}}}         % residue field
     % its dual
         % an ideal class
   % dual basis
 % Class of a set

%%% Vectors, maps, quaternions

       % vectors
              % maps
             % quaternions
     % localization
  % conjugate

%%% norm, trace, disc, level

%%% I(O), M(O), etc.

%%% Hecke operators

\def\T_#1(#2){{\mathop{\mathscr T}\nolimits_{#1}(\id{#2})}}
\def\TO(#1)_#2(#3){{\mathop{\mathscr T}\nolimits^{#1}_{#2}(\id{#3})}}

%%% Special points

\def\A_#1(#2){{\mathop{\mathscr A}\nolimits_{#1}(\id{#2})}}
\def\Ax_#1(#2){{\mathop{\widetilde{\mathscr A}}\nolimits_{#1}(\id{#2})}}

%%% Special vectors

%%% Gross formula

\def\localconstant#1(#2){\varepsilon_{#1}({#2})}
\def\sign#1(#2){\epsilon_{#1}({#2})}

%% Modular form

%% Denominator of a fractional ideal

\begin{document}

\title{Computing ideal classes representatives\\in quaternion algebras}

\author{Ariel Pacetti}
\email{apacetti@dm.uba.ar}
\address{Departamento de Matem\'atica, Universidad de Buenos Aires - 
         Pabell\'on I, Ciudad Universitaria (C1428EGA), Buenos Aires,
         Argentina}
\author{Nicol\'as Sirolli}
\email{nsirolli@dm.uba.ar}
\address{Departamento de Matem\'atica, Universidad de Buenos Aires - 
         Pabell\'on I, Ciudad Universitaria (C1428EGA), Buenos Aires,
         Argentina}

\thanks{The first author was partially supported by PIP 2010-2012 GI and
UBACyT X867. The second author was partially supported by a CONICET PhD
Fellowship}

\keywords{Quaternion algebras, ideal classes representatives, Bass orders}
\subjclass[2010]{Primary: 11R52}

\begin{abstract} 
Let $K$ be a totally real number field and let $B$ be a totally definite
quaternion algebra over $K$. In this article, given a set of representatives for
ideal classes for a maximal order in $B$, we show how to construct in an
efficient way a set of representatives of ideal classes for any Bass order in
$B$. The algorithm does not require any knowledge of class numbers, and improves
the equivalence checking process by using a simple calculation with global
units. As an application, we compute ideal classes representatives for an order
of discriminant $30$ in an algebra over the real quadratic field $\Q[\sqrt{5}]$.
\end{abstract}

\maketitle

\renewcommand{\labelenumi}{(\alph{enumi})}
\renewcommand{\labelenumii}{\roman{enumii}.}

\section*{Introduction}

The theory of quaternion algebras over number fields plays a central role in
many computations related to modular forms. For example, orders in totally
definite quaternion algebras over totally real fields can be used to compute 
Hilbert modular forms,  as explained in \cite{Pizer} for classical modular forms
and in \cite{lassina} for Hilbert modular forms over totally real fields of even
degree. These methods require first to find a suitable order in such algebra,
and then compute representatives for the equivalence classes of its left ideals.
The purpose of this article is to compute both things in an efficient way, and
in a rather general setting, which includes Eichler orders and many others - for
example, the orders used in \cite{tornaria} to compute half-integral weight
modular forms in Shimura correspondence with modular forms of level $p^2$. 

Let $K$ be a number field and let $B$ be a quaternion algebra over $K$. When
computing ideal classes representatives, locally isomorphic orders in $B$ can be
regarded as equal, since two such orders have a connecting ideal, and
multiplication by this ideal gives a bijection between ideal classes
representatives for both orders. Hence, it is natural to group locally
isomorphic orders into \emph{genera}. Our first main result is the following
theorem.

\begin{thma}
 
There is an algorithm that, given a Bass order $\O$ in $B$, computes
suborders of $\O$ of any given genus.

\end{thma}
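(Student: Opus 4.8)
The plan is to reduce the global construction to a finite collection of local problems, solve each using the structure theory of Bass orders, and then reassemble the pieces. The starting point is that lying in a given genus is a local condition: two orders in $B$ are in the same genus exactly when they become isomorphic after completing at every prime $\pp$ of $K$. Moreover, any suborder $\O'\subseteq\O$ agrees with $\O$ at all but finitely many primes, since $\O'_\pp=\O_\pp$ for every $\pp$ not dividing the index $[\O:\O']$. Hence a suborder of $\O$ in a prescribed genus $\mathfrak{g}$ is determined by finite data: the finite set $S$ of primes at which $\mathfrak{g}$ differs from the genus of $\O$, together with, for each $\pp\in S$, a local suborder $\O'_\pp\subseteq\O_\pp$ whose isomorphism class is the one prescribed by $\mathfrak{g}$. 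I would first compute $S$: since $\mathfrak{g}$ is specified by its local invariants (conductors), only the primes dividing the relevant conductor and discriminant ideals can lie in $S$, so $S$ is recovered by a factorization and is effectively computable.

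The core of the argument, and the step where the Bass hypothesis is indispensable, is the local construction at each $\pp\in S$. Here one works in the completion $B_\pp$ and invokes the classification of Bass orders over the completed local ring of $K$ at $\pp$: the overorders of a local Bass order form a chain, and this rigidity forces the local orders of each prescribed type sitting inside $\O_\pp$ to admit an explicit description. Concretely, after conjugating $\O_\pp$ into a standard normal form, a suborder $\O'_\pp$ realizing the target local genus is cut out by imposing explicit congruence conditions modulo a power of $\pp$, the exponent being read off from $\mathfrak{g}$. The hard part will be establishing the two facts that make this step well posed: that such a suborder exists precisely when the prescribed local type actually occurs as a suborder of $\O_\pp$ (if some $\pp\in S$ fails this, no suborder of genus $\mathfrak{g}$ exists and the algorithm reports so), and that it is unique up to conjugation by $\O_\pp^\times$. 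Both should follow from the Bass structure theory, but converting that theory into an effective recipe for the normalizing conjugation and the congruence exponents is where the genuine work lies.

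Finally I would globalize. Given $\O$ together with the local suborders $\O'_\pp$ for $\pp\in S$, the local--global correspondence for lattices produces a unique order $\O'$ in $B$,
\[
\O' \;=\; \{\, x\in\O : x\in\O'_\pp \text{ for every } \pp\in S \,\},
\]
whose completion at each $\pp\in S$ is the prescribed $\O'_\pp$ and which equals $\O_\pp$ at every other prime; by construction $\O'$ is then a suborder of $\O$ lying in the genus $\mathfrak{g}$. This intersection is computed effectively by linear algebra over the ring of integers of $K$, via a Hermite normal form encoding the finitely many congruence conditions. Combining the three steps --- compute $S$, build each local suborder from the Bass data, and intersect --- yields the algorithm; since every step is effective and only finitely many primes are involved, the procedure terminates and returns a suborder of $\O$ of the requested genus.
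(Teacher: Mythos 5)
Your global architecture coincides with the paper's: reduce to finitely many primes, build a local suborder at each, and reassemble by a local--global construction (your intersection $\{x\in\O : x\in\O'_\pp \text{ for all } \pp\in S\}$ is an equivalent, equally effective variant of the paper's Proposition \ref{prop:latticeconstruction}, which writes the answer as $\pp^e\Lambda+\langle \tilde w_1,\dots,\tilde w_n\rangle_\OO$ and reduces via Hermite normal form). The problem is that the step you yourself flag as ``where the genuine work lies'' --- producing the local suborder $\O'_\pp\subseteq\Op$ of prescribed isomorphism class --- is exactly the content of the theorem, and you leave it as a black box. An appeal to ``Bass structure theory'' plus ``congruence conditions modulo a power of $\pp$'' is not an algorithm. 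The paper fills this in with three concrete ingredients: (i) the classification of local Bass orders by ternary quadratic forms (Table \ref{table:qforms}) together with the explicit ``beneath'' relation of Figure \ref{fig:grafo}, which reduces everything to descending one step at a time through \emph{maximal} suborders of index $\pp$ or $\pp^2$ (Proposition \ref{prop:e=1_o_2}); (ii) the notion of a (quasi-)good basis and the explicit formulas of Table \ref{table:suborders} giving generators $d_1,d_2,d_3$ of the maximal suborder in terms of such a basis; and (iii) an effective way to find a quasi-good basis, namely diagonalizing the Gram matrix of the dual basis only modulo a controlled power of $\pp$ via the lifting Lemma \ref{lemma:levantamiento}. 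None of this is recoverable from what you wrote.

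Two of your supporting claims also point in the wrong direction. First, the chain property of Bass orders concerns \emph{overorders}; the suborder structure genuinely branches (Figure \ref{fig:grafo} shows a local Bass order can have two non-isomorphic maximal suborders), so the ``rigidity'' you invoke does not by itself pin down the congruence conditions cutting out $\O'_\pp$. Second, uniqueness of $\O'_\pp$ up to conjugation by $\Op^\times$ is neither needed for Theorem A (which asks only for \emph{some} suborder in the genus) nor is it what makes the step well posed; what is needed is an existence criterion (read off from the graph of beneath-relations) and an explicit construction, which your proposal defers rather than supplies.
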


In particular, Theorem A allows us to calculate any Bass order in any quaternion
algebra, since by \cite{voight} we know how to obtain maximal orders in
this general setting.

The second main result concerns the computation of left ideal classes
representatives for Bass orders, assuming that $K$ is totally real and $B$ is
totally definite.

\begin{thmb}

There is an algorithm that, given a Bass order $\O$ in $B$ and a set of
representatives $S$ of left $\O$-ideal classes, computes left ideal classes
representatives for suborders of $\O$ of any given genus. Furthermore, the set
of norms of the computed ideals is the same as the set of norms of the ideals in
$S$. 

\end{thmb}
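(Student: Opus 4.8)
The plan is to build the suborder $\O'$ inside $\O$ using Theorem~A and then to transfer the given representatives $S$ from $\O$ down to $\O'$. Because a connecting ideal between locally isomorphic orders induces a norm-preserving bijection of ideal class sets, as recalled in the introduction, the set $\Cl(\O')$ depends only on the genus of $\O'$; hence it suffices to carry out the computation for the one explicit representative $\O'\subseteq\O$ furnished by Theorem~A. From $\O'\subseteq\O$ we get $\widehat{\O'}{}^\times\subseteq\widehat{\O}{}^\times$ on the finite adeles, so the rule $[J]\mapsto[\O J]$ defines a surjection $\pi\colon\Cl(\O')\twoheadrightarrow\Cl(\O)$; the whole problem reduces to describing the fibres of $\pi$ and producing an explicit ideal in each.

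First I would identify each fibre with an orbit space under a finite group. Writing $I_i\in S$ adelically as $I_i=B\cap\widehat{\O}g_i$ with $g_i\in\widehat{B}^\times$, the right order $\O_r(I_i)$ has completion $g_i^{-1}\widehat{\O}g_i$, and a routine double-coset manipulation gives
\[
\pi^{-1}\bigl([I_i]\bigr)\;\longleftrightarrow\;\widehat{\O'}{}^\times\backslash\widehat{\O}{}^\times/\O_r(I_i)^\times ,
\]
where $\O_r(I_i)^\times$ acts through $\beta\mapsto g_i\beta g_i^{-1}\in\widehat{\O}{}^\times$. This is effective for two reasons. The set $\widehat{\O'}{}^\times\backslash\widehat{\O}{}^\times=\prod_{\pp}\O'_\pp{}^\times\backslash\O_\pp^\times$ is finite, the product running only over the finitely many primes dividing the conductor of $\O'$ in $\O$, which are read off from the prescribed genus. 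And since $B$ is totally definite over the totally real field $K$, the group $\O_r(I_i)^\times$ is finite modulo the central units $\mathcal O_K^\times$, and these central units lie in $\widehat{\O'}{}^\times$ and so act trivially; the fibre is therefore an orbit computation for a finite group acting on a finite set. This replacement of the usual equivalence test by an orbit computation under global units is precisely the improvement announced in the abstract.

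Next I would produce a representative in each orbit and verify the norm statement. Lifting an orbit representative to $u\in\widehat{\O}{}^\times$, the corresponding left $\O'$-ideal is $J=B\cap\widehat{\O'}ug_i$; concretely $J_\pp=\O_\pp(g_i)_\pp=(I_i)_\pp$ at the primes where $\O_\pp=\O'_\pp$, while at the remaining primes $J_\pp=\O'_\pp u_\pp(g_i)_\pp$ is a sublattice of $(I_i)_\pp$ with left order $\O'_\pp$, and $J$ is recovered globally as the intersection of these local lattices. Since $u$ is an adelic unit its reduced norm is an adelic unit, so $\norm(J)=\norm(I_i)$ as fractional ideals of $K$; as $I_i$ ranges over $S$ and each of its refinements shares its norm, the set of norms of the computed ideals coincides with the set of norms of $S$.

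I expect the main obstacle to be making the local quotients $\O'_\pp{}^\times\backslash\O_\pp^\times$ and the conjugated action of $\O_r(I_i)^\times$ on them completely explicit. This is exactly where the Bass hypothesis enters: the completions of a Bass order are themselves Bass, hence of a restricted and well-classified local shape, so these quotients admit a small uniform description on which the finite global unit groups can be made to act by an honest computation. The remaining ingredients---listing the relevant primes from the genus, computing the finite unit groups $\O_r(I_i)^\times$ in a definite algebra, and assembling each $J$ from its local data---are routine with the tools already in place.
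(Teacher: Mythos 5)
Your proposal is correct and follows essentially the same route as the paper: your fibre description $\pi^{-1}([I_i])\leftrightarrow\widehat{\O'}{}^\times\backslash\widehat{\O}{}^\times/\Or(I_i)^\times$ is precisely the paper's decomposition $\cl(\O')=\coprod_{[I]}[\Psi(I)]$ combined with the bijection $\Psi(I)\cong(\Op')^\times\backslash\Op^\times$ and the transitive action of $\Or(I)^\times$ on each class inside $\Psi(I)$, and your norm argument (adelic units have unit norms) is the adelic phrasing of the paper's local one. The only organizational difference is that the paper iterates over \emph{maximal} suborders, so each step involves a single prime with index $\pp$ or $\pp^2$ and the quotient $(\Op')^\times\backslash\Op^\times$ reduces modulo $\pp$ to the explicit tables, whereas you treat the whole conductor at once and defer exactly that local work as ``routine''.
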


Hence, starting from a set of representatives for a maximal order (which can be
obtained following \cite{Pizer} or \cite{socrates} in certain particular cases,
and \cite{kirschmer} in the general setting), we can compute representatives for
any Bass order in $B$.

The algorithm is such that that the constructed ideals are
contained in the given ones. This allows to, in comparison to the methods
\textit{à la Pizer} (see, e.g., \cite{Pizer}, \cite{consani},
\cite{socrates}), avoid the repeated usage of norm forms
for checking equivalences between ideals (see \cite{Pizer}, Propositions 1.18
and 2.27), using this technique just once (see Remark
\ref{rmk:iterando_ideales}).

Bass orders can be described locally in terms of certain ternary quadratic
forms. The strategy for proving Theorems A and B is to reduce the situation to
the case of considering \emph{maximal} Bass suborders of $\O$. This allows to
construct both the desired suborder and its ideal classes representatives in
terms of
local computations related to the forms in correspondence with the orders. In
this special case, we also give a method to compute the ideal classes
representatives by global means.

The article is organized as follows. In the first section we give the
basic definitions that will be used throughout the article. In the second
section we prove Theorem A, first recalling the local description of Bass
orders. The third section is devoted to prove Theorem B. In the fourth section
we present an example of our algorithm: we show how to construct representatives
of ideal classes for an Eichler order of discriminant $30$ in the quaternion
algebra
$B$ over $\Q[\sqrt{5}]$ ramified at the two infinite places.

Throughout the article, in order to make the exposition clearer, we assume that
no dyadic primes occur in the discriminants of the orders considered. This case,
with the extra assumption that $2$ is inert in $K$, is treated separately in the
appendix.

\medskip

\noindent{\bf Acknowledgements:} We would like to thank Gonzalo
Tornar{\'\i}a and Lassi\-na Demb\'el\'e for the useful conversations we
held with them.

\section{Basic notions and notation} \label{sec:intro}

We start recalling some basic definitions and properties of quaternion
algebras that will be used during the paper. A more detailed exposition can be
found, for example, in \cite{vig} and \cite{kap}.

Let $\OO$ be a Dedekind domain, and let $K$ denote its fraction field. Let $\pp$
be a prime ideal of $\OO$. By $\OO_\pp$ we denote
the completion of $\OO$ at $\pp$, and we denote completions of other
objects in a similar way. By $v_\pp$ we denote the $\pp$-adic valuation on
$K_\pp$. The residue field $\OO_\pp/\pp\OO_\pp$ is denoted
by $\kp$, and by $\pip$ we denote an element of $\OO$ which is a local
uniformizer of $\pp\OO_\pp$.

Let $B$ be a \emph{quaternion algebra} over $K$, that is, a four dimensional,
central and simple $K$-algebra with unity. Then $B$ has a natural involution
$x\mapsto \bar x$, that induces the linear form \emph{(reduced) trace} given by
$\tr(x)=x+\bar x$ and the quadratic form \emph{(reduced) norm} given by
$N(x)=x\bar x$. The bilinear form corresponding to the latter is
$(x,y)\mapsto\tr(x\bar y)$.

A \emph{lattice} $\Lambda$ in $B$ is a finitely generated $\OO$-module
such that $\Lambda\otimes_\OO K\simeq B$. Given a lattice $\Lambda$, its
\emph{dual lattice} $\Lambda^\vee$ is defined by
\[
 \Lambda^\vee=\{x\in B:\tr(x\Lambda)\subseteq \OO\}.
\]

An \emph{order} is a lattice $\O$ which is also a subring with unity. Its
\emph{(reduced) discrimi\-nant} is the ideal $d(\O)\subseteq\OO$ whose square is
the ideal generated by $\{\det(\tr(x_i \bar{x_j})):x_1,\dots,x_4\in\O\}$.

Given a lattice $\Lambda$, the set
\[
\O_l(\Lambda)=\{x\in B: x\Lambda\subseteq\Lambda\}
\]
is an order called \emph{the left order of } $\Lambda$. The right
order is defined and denoted in a similar way. We define the \emph{inverse} of $
\Lambda$ by
\[
 \Lambda ^{-1}=\{x\in B:  \Lambda x \Lambda \subseteq \Lambda \}.
\]
We say that $ \Lambda $ is \emph{invertible} if
$ \Lambda  \Lambda ^{-1}=\Ol(\Lambda)$ and $ \Lambda ^{-1} \Lambda
=\Or(\Lambda)$. An order $\O$ is called  a \emph{Gorenstein} order
if every lattice $\Lambda$ such that $\O_l(\Lambda)=\O$ is invertible, and it is
called a \emph{Bass} order if every order containing it is a Gorenstein order.

Given two lattices $ \Lambda \supseteq \Lambda' $ in $B$, the \emph{index} of $
\Lambda' $ in $ \Lambda $ is the ideal $[ \Lambda : \Lambda' ]\subseteq\OO$
generated
by $\{\det(\phi):\phi\in\End_K(B),\,\phi( \Lambda )\subseteq \Lambda' \}$.

Let $\O$ be an order in $B$. A \emph{left $\O$-(invertible) ideal} is an
invertible lattice $ I $ such that $\Ol(I)=\O$. Two left $\O$-ideals $ I $ and
$ J $ are called \emph{equivalent} if there exists $x\in B^\times$
such that $ I = J x$, and the set of equivalence classes is denoted by $\cl(\O)
$. A left $\O$-ideal $ I $ is called
\emph{principal} if it is equivalent to $\O$, i.e., if there exists $x
\in B^\times$ such that $ I  = \O x$. A lattice $ I $ is
invertible if and only if $ I_\pp$ is principal for all $\pp$.

Let $\O,\O'$ be orders in $B$. We say that they are in the same
\emph{genus} if $\Op\simeq\Op'$ for all $\pp$. This is equivalent
to the existence of an ideal $I$ connecting $\O$ and $\O'$, i.e.,
such that $\Ol(I)=\O$ and $\Or(I)=\O'$.

\medskip

\noindent \textbf{Notation index}
\begin{itemize}

\item $\pp,\qq,\dots$: prime ideals of $\OO$.
\item $\Lambda,\Lambda',\dots$: lattices in $B$.
\item $\O,\O',\dots$: orders in $B$.
\item $\O^{\times,1}=\{x\in\O:N(x)=1\}$.
\item $I,J,\dots$: invertible lattices in $B$.
\item $\< a_1,\dots,a_n>$: the quadratic form $\sum_{i=1}^n a_i x_i^2$
\item $\diag(a_1,\dots,a_n)$: the diagonal matrix with $a_i$ as $(i,i)$
coefficient.

\end{itemize}

\section{Constructing suborders}

The aim of this section is to prove Theorem A. Its proof, together with a
precise description of the input of the algorithm,
will we given at the end of the section, once we have developed the necessary
tools.

The problem can be reduced to compute \emph{maximal} suborders of $\O$ in any
given genus. The index of a maximal suborder of a given order is known,
according to Corollary 1.11 of \cite{brz-ord}, which we recall here.

\begin{prop}\label{prop:e=1_o_2}
Let $\O$ be an order in $B$, and let $\O'$ be a maximal suborder of $\O$. Then,
there exists $\pp$ such that $[\O:\O']=\pp$ or $\pp^2$ and $\pp\O'\subseteq\O$.
\end{prop}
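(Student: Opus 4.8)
The plan is to localize the problem at a single prime and then analyze the reduction modulo that prime. Since $\O'\subseteq\O$ are both full lattices, the quotient $\O/\O'$ is a finite $\OO$-module, and $\O_\qq=\O'_\qq$ for every prime $\qq$ outside its (finite) support. First I would argue that this support is a single prime $\pp$: if two distinct primes $\pp_1,\pp_2$ occurred, I would define the lattice $\O''$ by $\O''_{\pp_1}=\O_{\pp_1}$ and $\O''_\qq=\O'_\qq$ for all $\qq\neq\pp_1$. Being an order and containing $1$ are local conditions, so $\O''$ is an order, and by construction $\O'\subseteq\O''\subseteq\O$ with both inclusions strict (the first at $\pp_1$, the second at $\pp_2$), contradicting the maximality of $\O'$. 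Hence $[\O:\O']$ is a power of $\pp$, and the same gluing construction (an intermediate local order would glue to an intermediate global order) shows that $\O'_\pp$ is a maximal suborder of $\O_\pp$. It therefore suffices to prove $\pp\O_\pp\subseteq\O'_\pp$ and that $[\O_\pp:\O'_\pp]$ equals $\pp\OO_\pp$ or $\pp^2\OO_\pp$.

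Next I would establish the containment $\pp\O_\pp\subseteq\O'_\pp$. The key observation is that $\O'_\pp+\pp\O_\pp$ is again an order: it is a lattice, it contains $1$, and it is closed under multiplication because $\pp\O_\pp$ is a two-sided ideal of $\O_\pp$ and $\O'_\pp\O_\pp\subseteq\O_\pp$. Since it lies between $\O'_\pp$ and $\O_\pp$, maximality forces it to equal one of them. It cannot equal $\O_\pp$: the equality $\O_\pp=\O'_\pp+\pp\O_\pp$ gives $\O_\pp/\O'_\pp=\pp\,(\O_\pp/\O'_\pp)$, so Nakayama's lemma applied to the finitely generated $\OO_\pp$-module $\O_\pp/\O'_\pp$ would yield $\O_\pp=\O'_\pp$, which is false. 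Therefore $\O'_\pp+\pp\O_\pp=\O'_\pp$, i.e. $\pp\O_\pp\subseteq\O'_\pp$, and reassembling the localizations gives $\pp\O\subseteq\O'$.

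Finally I would bound the index by passing to the reduction $V=\O_\pp/\pp\O_\pp$, a four-dimensional $\kp$-algebra. Because $\pp\O_\pp\subseteq\O'_\pp$, the image $W=\O'_\pp/\pp\O_\pp$ is a proper unital $\kp$-subalgebra of $V$; moreover every order between $\O'_\pp$ and $\O_\pp$ contains $\pp\O_\pp$, so such orders correspond bijectively to unital subalgebras between $W$ and $V$, and maximality of $\O'_\pp$ means there is no unital $\kp$-subalgebra strictly between $W$ and $V$. The crucial structural fact is that every $x\in\O_\pp$ satisfies $x^2-\tr(x)\,x+N(x)=0$ with $\tr(x),N(x)\in\OO_\pp$; reducing modulo $\pp$, every element of $V$ satisfies a monic quadratic over $\kp$, so $\kp[\bar x]$ has dimension at most $2$ for each $\bar x\in V$. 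If $W$ had dimension $1$, i.e. $W=\kp$, then any $\bar x\in V\setminus\kp$ would produce a chain $\kp\subsetneq\kp[\bar x]\subsetneq V$ of subalgebras (with $\dim_{\kp}\kp[\bar x]=2<4$), contradicting the maximality of $W$. Hence $2\le\dim_{\kp}W\le 3$, so $\dim_{\kp}(\O_\pp/\O'_\pp)\in\{1,2\}$ and $[\O:\O']$ equals $\pp$ or $\pp^2$.

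I expect the main obstacle to be this last step: one must pin down the right invariant of the reduction $V$ that limits how large $V/W$ can be, and the reduced characteristic polynomial—which survives reduction and forces every element of $V$ to have degree at most $2$—is exactly what makes the codimension bound go through. By comparison, the localization to a single prime and the Nakayama argument for $\pp\O\subseteq\O'$ are routine.
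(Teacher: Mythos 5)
Your proof is correct, and there is nothing in the paper to compare it against: the proposition is quoted from Corollary 1.11 of Brzezinski's \emph{On orders in quaternion algebras} and no proof is given in the text. All three of your steps are sound. The gluing argument correctly reduces to a single prime and shows that $\Op'$ is a maximal suborder of $\Op$; the observation that $\Op'+\pp\Op$ is an order, combined with Nakayama, correctly rules out $\Op'+\pp\Op=\Op$ and yields $\pp\O\subseteq\O'$; and the passage to the four-dimensional $\kp$-algebra $V=\Op/\pp\Op$ identifies the right invariant, namely that the reduced characteristic polynomial $x^2-\tr(x)x+N(x)=0$ survives reduction, so $\kp[\bar x]$ has dimension at most $2$ and a one-dimensional $W=\Op'/\pp\Op$ could never be maximal among proper unital subalgebras. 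One further point in your favour: the inclusion as printed, $\pp\O'\subseteq\O$, is vacuous since $\O'\subseteq\O$; the inclusion the paper actually needs and uses later (it is the hypothesis of Proposition \ref{prop:unid_mod_p} and is invoked in the proof of Lemma \ref{lemma:lambdaindex}) is $\pp\O\subseteq\O'$, which is precisely what your Nakayama step establishes, so you have proved the intended statement rather than the misprinted one.
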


Hence, maximal suborders of a given order $\O$ can be obtained by describing the
maximal suborders of $\Op$ for every $\pp$.

\subsection*{Local Bass orders}

From here on we assume that $\pp\nmid 2$, and we fix $\delta\in\OO$ such that
$(\frac{\delta}{\pp})=-1$.

\medskip

The correspondence between isomorphism classes of Gorenstein orders in
quaternion algebras over local fields and ternary quadratic forms was
developed in \cite{brz-gor}. This correspondence was explored further
in \cite{joh}, where it is refined to describe Bass orders. We
summarize here the results we extract from this article.

\medskip

Let $\Op$ be an order, and let $\E=\{f_0,f_1,\-f_2,f_3\}$ be
a basis of $\Op^\vee$ satisfying
\begin{equation}\label{eqn:base-dual}
 \tr(f_0)=1,\quad\tr(f_1)=\tr(f_2)=\tr(f_3)=0.
\end{equation}

Denote by $M_{\E}$ the Gram matrix of the norm form
in the trace zero submodule of $\Op^\vee$ corresponding to $\E$, i.e.
\[
 M_{\E}=\big(\tr(f_i\bar f_j)\big)_{1\leq i,j\leq 3}.
\]
Then $d\cdot M_{\E}$ is the ternary quadratic form associated to
$\Op$, where $d$ is any generator of $d(\Op)$.

Conversely, to an integral ternary quadratic form $f$ over $\OO_\pp$
can be associated an order $C_0(f)$ in a quaternion algebra over
$K_\pp$: the order and the algebra are given by the even part of the
Clifford algebras associated to $f$ over $\OO_\pp$ and $K_\pp$
respectively.

By Propositions 5.8 and 5.10 of \cite{joh}, the maps $\Op\mapsto d\cdot M_\E$
and $f\mapsto C_0(f)$ give a bijection between isomorphism classes of
Bass orders in quaternion algebras over $K_\pp$ and the set of ternary
quadratic forms of Table \ref{table:qforms}, where we group forms into
\emph{classes} that will be treated in a unified way when convenient.

\begin{table}[h]
\scalebox{1}{
\begin{tabular}{|l|c|c|c|c|}
\hline
Class & Form & Parameters & Hilbert Symbol\\
\hline\hline
A1& $\< 1,-1,\pip^s >$ & $ s\geq 0$  & $1$\\
A2& $\< 1 , -\delta , \pip^s >$ & $s\geq 1$  &$(-1)^s$\\
B & $\< 1 , \pip , \epsilon_1 \pip>$ & $\epsilon_1\in\{1,\delta\}$ 
& $\kro{-\epsilon_1}{\pp}$\\
C &
$\<1,\epsilon_1\pip,\epsilon_2\pip^s>$&$\epsilon_1,\epsilon_2\in\{1,\delta\}, \,
s\geq2$ 
& $\kro{\epsilon_1}{\pp}^s\kro{-\epsilon_2}{\pp}$\\
\hline
\end{tabular}}
\caption{Ternary quadratic forms in correspondence with local
Bass orders.}\label{table:qforms}
\end{table}

In particular, every Bass order $\O$ in $B$ induces a family $(f_\pp)_\pp$ of
ternary quadratic forms, by letting $f_\pp$ be the form from Table
\ref{table:qforms} corresponding to $\Op$. This family satisfies that
$f_\pp=\< 1,-1,1 >$ for almost every $\pp$, and is
independent of the genus of $\O$.

Equation (\ref{eqn:norma}) below implies that, given a form $f=\<1,a,b >$, then
the quaternion algebra $C_0(f)\otimes_{\OO_\pp} K_\pp$ is a matrix algebra if
and only if $\< a,b,ab>$ is isotropic, i.e., if and only if the Hilbert symbol
$(\frac{-a,-b}{\pp})$ equals $1$. The sign for each case is shown in Table
\ref{table:qforms}.

The graphs on Figure \ref{fig:grafo} show how the isomorphism classes of Bass
orders  in quaternion algebras over $K_\pp$ are distributed. Each vertex
represents an isomorphism class of Bass orders, and there is an edge between two
vertices if and only if there is an order $\Op$ corresponding to the top vertex,
and an order $\Op'$ corresponding to the bottom vertex, such that $\Op'$ is a
maximal suborder of $\Op$; if $f$ and $g$ are respectively the corresponding
forms from Table \ref{table:qforms}, we will say that $g$ is \emph{beneath} $f$.
Note that these graphs reflect the assertion of Proposition \ref{prop:e=1_o_2}.

\begin{figure}[h]
\scalebox{1}{
\xymatrix@-1pc {
& & \text{Division} & &  & & & \text{Matrix} & & \\
  & & & & \OO_\pp & & & & & \bullet \ar@{-}[d] \ar@{-}[ddllll] \\
\bullet \ar@{-}[dd] \ar@{-}[drr]  & & & & \pp & & & & & \bullet
  \ar@{-}[d] \ar@{-}[dll] \\
 & & \bullet \ar@{-}[dl] \ar@{-}[dr] & &\pp^2 & \bullet \ar@{-}[dd] & &
  \bullet \ar@{-}[dl] \ar@{-}[dr] & &  \bullet \ar@{-}[d]\\
\bullet \ar@{-}[dd] & \bullet \ar@{-}[d] & &\bullet \ar@{-}[d] & \pp^3 &
& \bullet \ar@{-}[d] & & \bullet \ar@{-}[d] &  \bullet \ar@{-}[d]\\
&  \bullet \ar@{-}[d] & & \bullet \ar@{-}[d] & \pp^4  &\bullet \ar@{..}[dd] &
\bullet \ar@{-}[d] &
& \bullet \ar@{-}[d] &  \bullet
\ar@{-}[d]\\
\bullet \ar@{..}[d]&  \bullet \ar@{..}[d] & & \bullet \ar@{..}[d] & \pp^5 & 
\ar@{..}[d]  &
\bullet \ar@{..}[d] & & \bullet
\ar@{..}[d] & \bullet \ar@{..}[d]\\
A2  & C & B & C & & A2 & C & B & C & A1}
}
\caption{Isomorphism classes of Bass orders \label{fig:grafo}}
\end{figure}
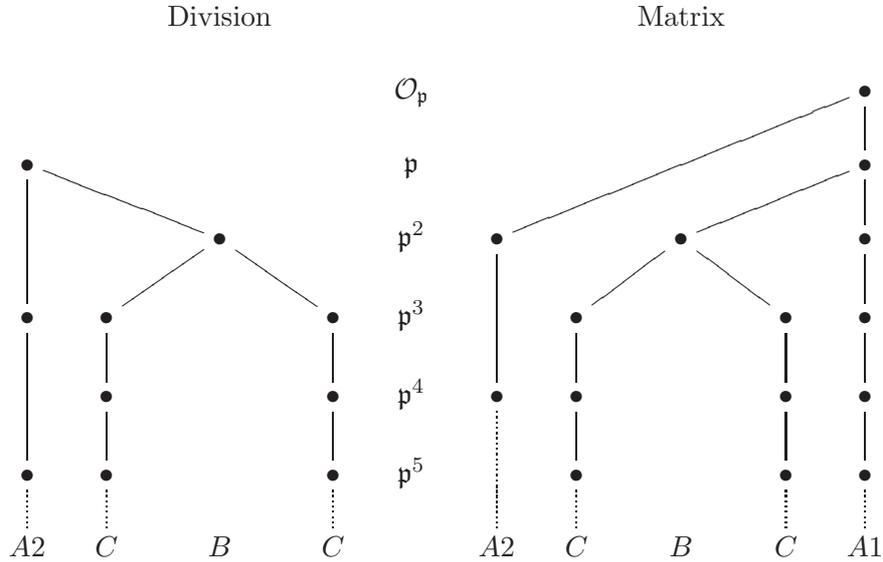

All the orders in the left graph lie in the division quaternion algebra, while
all the orders in the right one lie in the matrix algebra. Horizontally aligned
vertices have the same discriminant, which is indicated in the middle column.
Vertically aligned vertices correspond to forms of the same \emph{class}, which
is indicated in the bottom row. The orders of class A1 are the so called
\emph{Eichler orders} (see, e.g., Section 2 of \cite{brz-ord}), and the orders
of class A2 are the \emph{orders of level $p^{2r+1}$} considered in
\cite{piz-arithII}.

\begin{exm}
Let $s\geq0$. The order $E_s=\big(\begin{smallmatrix} \OO_\pp &
  \OO_\pp \\ \pip^s \OO_\pp & \OO_\pp
             \end{smallmatrix}
\big) \subseteq M_2(\OO_\pp)$ is a Bass order of class A1 and
discriminant $\pp^s$. $E_{s+1}$ is a maximal suborder of $E_s$.
\end{exm}

\begin{defi}
Let $\Op$ be a Bass order in correspondence with the form $f=\< 1 , a , b >$,
and let $\B=\{1,e_1,e_2,e_3\}$ be a basis of $\Op$ as an $\OO_\pp$-module. We
say that $\B$ is a \emph{good basis} if the $e_i$ satisfy
\begin{align}\label{eqn:tabla_mult}
 e_1^2&=-ab, & e_2^2&=-b, & e_3^2&=-a, \notag \\
 e_1 e_2&=-b e_3, & e_2 e_3 &= -e_1, & e_3 e_1 &= -a e_2,\\
e_2 e_1&=b e_3, & e_3 e_2 &= e_1, & e_1 e_3 &= a e_2. \notag
\end{align}

\end{defi}

Every Bass order has a good basis (see Section 4 of \cite{joh},
and also \cite{gross-luc}), and in such basis the norm form is given by

\begin{align}
 N &= \< 1 , ab , b , a>. \label{eqn:norma}
\end{align}

\begin{exm}

A good basis for the order $E_s$ defined above is given by
\[
1=\Big(\begin{array}{c c}
              1 & 0 \\
              0 & 1
             \end{array}
\Big),\quad e_1=\Big(\begin{array}{c c}
              0 & 1 \\
              \pip^s  & 0
             \end{array}
\Big),\quad e_2=\Big(\begin{array}{c c}
             0 & 1 \\
              -\pip^s & 0
             \end{array}
\Big),\quad e_3= \Big(\begin{array}{c c}
              1 & 0 \\
              0 & -1
             \end{array}
\Big).
\]
\end{exm}

\medskip

Let $\Op$ be an order in correspondence with the form $f=\< 1 , a , b >$, and
let $\E=\{f_0,f_1,f_2,f_3\}$ be a basis of $\Op^\vee$ satisfying
\eqref{eqn:base-dual}. Let $e_i=4ab\cdot f_j\bar{f_k}$, where $(i,j,k)$ is an
even permutation of $(1,2,3)$, and define $\E^\dagger=\{1,e_1,e_2,e_3\}$. Then
$\E^\dagger$ is a basis of $\Op$ (see \cite{joh}, Theorem 4.3).

\begin{prop}\label{prop:dualidad} 

With the notation as above, if $\E$ is such that
\begin{equation}\label{eqn:dual-qbuena}
 2ab \cdot M_{\E} = \diag(1,a,b),
\end{equation}
then $\E^\dagger$ is a good basis of $\Op$ (see \cite{joh}, Theorem 4.3).
\end{prop}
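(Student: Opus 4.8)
The plan is to take as given, from \cite{joh} (Theorem 4.3), that $\E^\dagger=\{1,e_1,e_2,e_3\}$ is already a basis of $\Op$, so that the only thing left to prove is that this basis satisfies the multiplication table \eqref{eqn:tabla_mult}. First I would unwind the two hypotheses \eqref{eqn:base-dual} and \eqref{eqn:dual-qbuena} into concrete relations among $f_1,f_2,f_3$. Since $\tr(f_i)=0$ for $i=1,2,3$, conjugation acts by $\bar{f_i}=-f_i$, whence $f_i^2=-N(f_i)$. Comparing the diagonal of \eqref{eqn:dual-qbuena} with $\tr(f_i\bar{f_i})=2N(f_i)$ yields the norms
\begin{equation*}
 N(f_1)=\tfrac{1}{4ab},\qquad N(f_2)=\tfrac{1}{4b},\qquad N(f_3)=\tfrac{1}{4a},
\end{equation*}
while the vanishing of the off-diagonal entries records that $f_1,f_2,f_3$ are pairwise orthogonal for the norm form.

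The structural heart of the argument is that orthogonal trace-zero elements anticommute. Since conjugation is an anti-involution, $\tr(f_i\bar{f_j})=f_i\bar{f_j}+f_j\bar{f_i}=-(f_if_j+f_jf_i)$, so the anticommutator $f_if_j+f_jf_i=-\tr(f_i\bar{f_j})$ is a scalar that vanishes for $i\neq j$; hence $f_if_j=-f_jf_i$. With this in hand I would rewrite the generators as $e_i=4ab\,f_j\bar{f_k}=-4ab\,f_jf_k$, for $(i,j,k)$ an even permutation of $(1,2,3)$, and expand each entry of \eqref{eqn:tabla_mult}. For a square one slides the repeated factors together and replaces $f_\ell^2$ by $-N(f_\ell)$, e.g.
\begin{equation*}
 e_1^2=16a^2b^2\,f_2f_3f_2f_3=-16a^2b^2\,f_2^2f_3^2=-16a^2b^2\,N(f_2)N(f_3)=-ab,
\end{equation*}
and likewise $e_2^2=-b$, $e_3^2=-a$; for a mixed product one cancels a repeated factor through a square, e.g.
\begin{equation*}
 e_2e_3=16a^2b^2\,f_3f_1f_1f_2=-16a^2b^2\,N(f_1)\,f_3f_2=16a^2b^2\,N(f_1)\,f_2f_3=-e_1,
\end{equation*}
and the remaining entries of \eqref{eqn:tabla_mult} drop out by the same manipulation. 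The scalar $4ab$ and the exact normalization \eqref{eqn:dual-qbuena} are calibrated precisely so that these surviving coefficients collapse to the integers appearing in \eqref{eqn:tabla_mult}.

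I expect the only delicate point to be sign bookkeeping: one must check that the even-permutation convention in the definition of $e_i$ fixes a single consistent orientation, so that ordered relations such as $e_2e_3=-e_1$ and $e_3e_2=e_1$ emerge with the correct opposite signs rather than merely up to sign. Conceptually this is transparent, and it is the viewpoint of \cite{joh}: the table \eqref{eqn:tabla_mult} is exactly the multiplication of the even Clifford algebra $C_0(\<1,a,b>)$. If $v_1,v_2,v_3$ is an orthogonal basis of the underlying space with $v_i^2=q(v_i)$ equal to $1,a,b$, then the even generators $v_2v_3,\,v_3v_1,\,v_1v_2$ satisfy precisely the relations imposed on $e_1,e_2,e_3$. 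The normalization \eqref{eqn:dual-qbuena} is exactly the one making the orthogonal trace-zero vectors $f_1,f_2,f_3$ proportional to such a triple, so that $e_i=-4ab\,f_jf_k$ becomes the even generator $v_jv_k$; recognizing this identifies \eqref{eqn:tabla_mult} with the Clifford relations and renders the consistency of all signs automatic.
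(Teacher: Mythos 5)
Your proof is correct. Note that the paper itself does not prove this proposition: it simply cites Theorem 4.3 of \cite{joh}, both for the fact that $\E^\dagger$ is a basis of $\Op$ and for the fact that the normalization \eqref{eqn:dual-qbuena} makes it a good one. What you supply is therefore a self-contained verification that the paper delegates to the reference, and it checks out: from $\tr(f_i)=0$ you correctly get $\bar f_i=-f_i$, hence $f_i^2=-N(f_i)$ with $N(f_1)=\tfrac{1}{4ab}$, $N(f_2)=\tfrac{1}{4b}$, $N(f_3)=\tfrac{1}{4a}$, and the vanishing off-diagonal entries of $2ab\cdot M_\E$ give the anticommutation $f_if_j=-f_jf_i$ via $f_if_j+f_jf_i=-\tr(f_i\bar f_j)$. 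Writing $e_i=-4ab\,f_jf_k$ and sliding repeated factors together then reproduces every line of \eqref{eqn:tabla_mult} with the right signs (I checked the reversed products $e_2e_1$, $e_3e_2$, $e_1e_3$ as well; they come out of the same manipulation). Your closing observation that the relations \eqref{eqn:tabla_mult} are exactly those of the even Clifford algebra $C_0(\<1,a,b>)$, with $e_1,e_2,e_3$ playing the roles of $v_2v_3,v_3v_1,v_1v_2$, is precisely the conceptual content of the cited theorem of Lemurell, so your argument and the paper's citation are two presentations of the same underlying fact; yours has the advantage of making the sign conventions and the role of the normalization constant $4ab$ explicit. The one point you take on faith — that $\E^\dagger$ is a basis of $\Op$ at all, i.e.\ that the products $4ab\,f_j\bar f_k$ land in $\Op$ and span it — is the same point the paper takes on faith from \cite{joh} in the sentence preceding the proposition, so this is a legitimate division of labor.
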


\begin{remark}\label{rmk:dualidad-recipr}
Conversely, if $\B$ is a good basis of $\Op$, then $M_{\B^\vee}$
satisfies (\ref{eqn:dual-qbuena}), where given a basis $\B=\{e_0,e_1,e_2,e_3\}$
of $\Op$, we denote by $\B^\vee=\{f_0,f_1,f_2,f_3\}$ the basis of $\Op^\vee$
characterized by the equations $\tr(e_i \bar f_j)=\delta_{ij}$.
\end{remark}

\subsection*{Constructing maximal suborders, the local case.}

Given an order $\Op$ correspon\-ding to a form $f$ from Table
\ref{table:qforms}, we construct a representative for each of the one or two
isomorphism classes of maximal suborders of $\Op$ (see Figure \ref{fig:grafo}).
The way to do this is, given a good basis $\{1,e_1,e_2,e_3\}$ of $\Op$ and a
form $g$ from Table \ref{table:qforms} beneath $f$, find elements
$d_1,d_2,d_3\in \Op$ satisfying the equations (\ref{eqn:tabla_mult})
corresponding to the form $g$. Then, the order $\Op'=\<
1,d_1,d_2,d_3>_{\OO_\pp}$ is a maximal suborder of $\Op$ in correspondence with
the form $g$, for which $\{1,d_1,d_2,d_3\}$ is a good basis.

\medskip

Using Hensel's Lemma, take $\alpha_0,\alpha_1,\beta_0,\beta_1,\mu,\nu\in
\OO_\pp$ satisfying:
\begin{itemize}
\item $\alpha_0^2-\alpha_1^2=\pip$.
\item $\beta_0^2+\beta_1^2=\delta$.
\item $\mu^2=-1$, when $(\frac{-1}{\pp})=1$.
\item $\nu^2=-\delta$, when $(\frac{-1}{\pp})=-1$.
\end{itemize}

\begin{prop} \label{prop:suborders}
The elements $d_1,d_2,d_3$ defined by Table \ref{table:suborders}
satisfy the equations \eqref{eqn:tabla_mult} corresponding to the form $g$.
\end{prop}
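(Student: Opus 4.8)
The plan is to verify, for each target form $g$ from Table \ref{table:qforms} lying beneath $f$, that the elements $d_1,d_2,d_3$ listed in Table \ref{table:suborders} satisfy the complete multiplication table \eqref{eqn:tabla_mult} associated to $g$. Since a good basis is characterized precisely by these relations (the squares $d_1^2=-a'b'$, $d_2^2=-b'$, $d_3^2=-a'$, and the six products $d_1d_2=-b'd_3$, etc., where $g=\<1,a',b'\>$), this is a finite check. The key structural input is that the $d_i$ are built as $\OO_\pp$-linear combinations of the good basis $\{1,e_1,e_2,e_3\}$ of $\Op$, so every product $d_id_j$ can be expanded using \eqref{eqn:tabla_mult} for the \emph{source} form $f=\<1,a,b\>$, and the auxiliary scalars $\alpha_0,\alpha_1,\beta_0,\beta_1,\mu,\nu$ satisfy the defining relations $\alpha_0^2-\alpha_1^2=\pip$, $\beta_0^2+\beta_1^2=\delta$, $\mu^2=-1$ (when $(\frac{-1}{\pp})=1$), $\nu^2=-\delta$ (when $(\frac{-1}{\pp})=-1$).

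First I would organize the argument by the edges of the graph in Figure \ref{fig:grafo}, handling the transitions case by case: the passage within a class that raises the discriminant by $\pp^2$ (the vertical edges), and the passages between classes (A1/A2 to B to C, and so on) that raise the discriminant by $\pp$. For each edge I would substitute the prescribed $d_i$ into the nine products of \eqref{eqn:tabla_mult}, reduce using the source multiplication table, and collect coefficients. The verification of the three squares fixes the diagonal $\<1,a',b'\>$ and forces the identities among the auxiliary parameters to be exactly the ones guaranteed by Hensel's Lemma; for instance, a square of the form $(\beta_0 e_i+\beta_1 e_j)^2$ expands to $-(\beta_0^2+\beta_1^2)$ times a norm term via the anticommutation $e_ie_j=-e_je_i$, which is where the relation $\beta_0^2+\beta_1^2=\delta$ enters to produce the factor $\delta$ appearing in forms of class A2 and C. The cross products $d_id_j$ then follow from the same bookkeeping, and one checks that the antisymmetric pairs ($d_2d_1=-d_1d_2$ up to the sign dictated by $b'$) are automatic once the symmetric data agree.

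The main obstacle is the case split on the Hilbert symbol, equivalently on whether $(\frac{-1}{\pp})=\pm1$, since the construction genuinely bifurcates: in one branch the element $\mu$ with $\mu^2=-1$ is available and is used to manufacture the correct $\eps_1,\eps_2\in\{1,\delta\}$ in the target form, while in the other branch one must instead route through $\nu$ with $\nu^2=-\delta$. Keeping track of which generator is legitimate in each residue-field situation, and confirming that the resulting diagonal entries land in $\{1,\delta\}\cdot\pip^s$ with the correct powers of $\pip$ so that $g$ really is the form beneath $f$, is the delicate part; the scalars $\alpha_0,\alpha_1$ are precisely what adjust the $\pp$-adic valuation by one step to realize the discriminant jump $\pp$. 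Once the parameter relations are matched to the entries of Table \ref{table:qforms}, the verification of \eqref{eqn:tabla_mult} is mechanical, and the fact that $\Op'=\<1,d_1,d_2,d_3\>_{\OO_\pp}$ is then a maximal suborder corresponding to $g$ follows from the established correspondence between good bases and Bass orders together with Proposition \ref{prop:e=1_o_2}.
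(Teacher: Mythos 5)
Your proposal is correct and follows essentially the same route as the paper: the paper's proof is precisely the case-by-case verification that each listed $d_i$ satisfies the multiplication table \eqref{eqn:tabla_mult} for $g$, obtained by expanding products via the table for $f$ and invoking the defining relations of the auxiliary parameters $\alpha_0,\alpha_1,\beta_0,\beta_1,\mu,\nu$. The only difference is that you spell out the bookkeeping (and add the final remark about $\Op'$ being a maximal suborder, which belongs to Proposition \ref{prop:qbuena=>subord} rather than to this statement) where the paper simply declares the check straightforward.
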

\begin{table}[h]
\scalebox{1}{
 \begin{tabular}{|l|l|l|l|}
\hline
Form & Form beneath & Good basis for $\Op'$\\
\hline\hline
$\< 1 , -1 ,
\pip^s > $ & $\< 1 ,
-1  , \pip^{s+1} > $ & $d_1=\alpha_0
e_1+\alpha_1 e_2,$\\

& &  $d_2= \alpha_1 e_1+\alpha_0 e_2, d_3=e_3$\\

$\< 1 , -1 ,
1 > $& $\< 1 , -\delta
,
\pip^2 > $& $d_1=\pip(\beta_1 e_1-\beta_0 e_3)$, \\

& & $d_2=\pip e_2, d_3=\beta_0 e_1 + \beta_1 e_3$\\

$\< 1 , -1 ,
\pip > $ & $\< 1 , \pip ,
\pip >$, if
$(\frac{-1}{\pp})=1$& $d_1=\mu  \pip e_3, d_2= \mu e_1, d_3=e_2$\\

& $\< 1 , \pip , \delta\pip
>$, if $(\frac{-1}{\pp})=-1$& $d_1=  \nu \pip e_3, d_2= \nu e_1,
d_3=e_2$\\

\hline

$\< 1 , -\delta , \pip^s >$
& $\< 1 , -\delta  , \pip^{s+2}
>$
&
$d_1=\pip e_1, d_2=\pip e_2, d_3=e_3$\\

$\< 1 , -\delta , \pip >$ &
$\< 1 , \pip , \delta \pip
>$, if $(\frac{-1}{\pp})=1$& $d_1=\mu \pip e_3, d_2= \mu e_1,
d_3=e_2$ \\

& $\< 1 , \pip , \pip >$,
if
$(\frac{-1}{\pp})=-1$& $d_1=\nu^{-1} \pip e_3, d_2= \nu^{-1} e_1,$\\

 & & $d_3=e_2$ \\

\hline

$\< 1 , \pip  , \pip >$ &
$\< 1 , \pip  , \pip^2 >$ &
$d_1= \pip e_2, d_2= e_1, d_3=e_3$ \\

& $\< 1 , \delta \pip
, \pip^2 >$ & $d_1=\pip(-\beta_1 e_2
+ \beta_0 e_3),$\\

& & $ d_2=e_1, d_3=\beta_0 e_2 + \beta_1 e_3$\\

 $\< 1 , \pip , \delta \pip
>$ & $\< 1 , \pip 
,  \delta \pip^2 >$ & $d_1= \pip e_2,
d_2= e_1,
d_3=e_3$ \\

& $\< 1 , \delta \pip
, \delta \pip^2 >$ & $d_1= \pip e_3,
d_2=e_1,
d_3=e_2$\\

\hline

$\< 1 , \pip ,  \pip^s> $ &
$\< 1 , \pip , \pip^{s+1}
>$ &
$d_1=
\pip e_2, d_2=e_1, d_3=e_3$ \\

$\< 1 , \delta \pip ,  \pip^s> $ & $\< 1
, \delta \pip , \delta \pip^{s+1} >$ & $d_1= \delta \pip e_2,
d_2=e_1, d_3=e_3$ \\

$\< 1 ,  \pip ,
\delta \pip^s> $ & $\< 1 , 
\pip , \delta \pip^{s+1} >$ & $d_1= \pip e_2, d_2=e_1, d_3=e_3$. \\

$\< 1 , \delta \pip , \delta \pip^s> $ & $\< 1
, \delta \pip ,\pip^{s+1}
>$ & $d_1= \delta\pip e_2, d_2= \delta^{-1} e_1,$\\
& & $d_3=e_3$ \\
\hline
\end{tabular}}
\caption{Suborders \label{table:suborders}}
\end{table}

\begin{proof}
In each case, it is straightforward to check that the $d_i$'s satisfy
the equations \eqref{eqn:tabla_mult} corresponding to $g$, using
that the $e_i$'s satisfy the equations corresponding to $f$.
\end{proof}

\begin{remark}\label{rmk:is_general}

It can be proved that this construction is general, in the sense that every
maximal suborder of $\Op$ can be obtained by the previous procedure, if we
start with a suitable good basis of $\Op$.

\end{remark}

\subsection*{Quasi-good bases}\label{subsection:qbuena}

So far, given an order $\Op$, we must obtain a good basis of it to compute
its suborders. This involves diagonalizing a ternary quadratic form over
$\OO_\pp$, which is not desirable from the computational viewpoint.
Nevertheless, as we will show in this subsection, this can be reduced to
diagonalize the corresponding form modulo $\pp^n$ for a certain small
non-negative integer $n$.

\begin{defi}\label{def:qbuena}
 
Let $\B=\{1,e_1,e_2,e_3\}$ be a basis of $\Op$. We say that $\B$ is a
\emph{quasi-good} basis if there exists a good basis $\tilde \B=\{1,\tilde
e_1,\tilde e_2,\tilde e_3\}$ of $\Op$ satisfying
\[
 \tilde e_i \equiv  e_i \mod(\pp\Op) \quad (1\leq i \leq 3).
\]

\end{defi}

\begin{prop}\label{prop:qbuena=>subord}

Let $\B=\{1,e_1,e_2,e_3\}$ be a quasi-good basis of an order
$\Op$ in correspondence with a form $f$, and let $g$ be a form beneath
$f$. Let $d_1,d_2,d_3$ be as in Table \ref{table:suborders}. Then,
\[
\Op'=\< 1, d_1, d_2, d_3>_{\OO_\pp}
\]
is a maximal suborder of $\Op$ in correspondence with the form $g$.

\end{prop}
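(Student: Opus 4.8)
The plan is to deduce the statement from the case of a genuine good basis, which is already handled by Proposition~\ref{prop:suborders} together with the discussion preceding it. Fix a good basis $\tilde\B=\{1,\tilde e_1,\tilde e_2,\tilde e_3\}$ of $\Op$ with $\tilde e_i\equiv e_i\pmod{\pp\Op}$, as provided by Definition~\ref{def:qbuena}. The formulas of Table~\ref{table:suborders} express each $d_i$ as an $\OO_\pp$-linear combination $d_i=\sum_{j=1}^{3}C_{ij}e_j$ for a fixed matrix $C=(C_{ij})\in M_3(\OO_\pp)$ whose entries are built from $\pip,\alpha_0,\alpha_1,\beta_0,\beta_1,\mu,\nu,\delta$. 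Feeding the same coefficients into the good basis produces $\tilde d_i:=\sum_j C_{ij}\tilde e_j$, and by Proposition~\ref{prop:suborders} the $\tilde d_i$ satisfy the multiplication rules \eqref{eqn:tabla_mult} for $g$; hence $\widetilde{\Op}':=\<1,\tilde d_1,\tilde d_2,\tilde d_3>_{\OO_\pp}$ is a maximal suborder of $\Op$ in correspondence with $g$, with good basis $\{1,\tilde d_1,\tilde d_2,\tilde d_3\}$. It therefore suffices to prove that $\Op'=\widetilde{\Op}'$.

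First I would establish the inclusion $\Op'\subseteq\widetilde{\Op}'$. Since the $C_{ij}$ lie in $\OO_\pp$ and $e_j-\tilde e_j\in\pp\Op$, linearity gives $d_i-\tilde d_i=\sum_j C_{ij}(e_j-\tilde e_j)\in\pp\Op$ for each $i$. On the other hand, being a maximal suborder of $\Op$, the order $\widetilde{\Op}'$ contains $\pp\Op$; this is the content of Proposition~\ref{prop:e=1_o_2}, and can in any case be read off directly from Table~\ref{table:suborders}, where in every row each $\pip\tilde e_k$ is recovered as an $\OO_\pp$-combination of the $\tilde d_i$. Consequently $d_i=\tilde d_i+(d_i-\tilde d_i)\in\widetilde{\Op}'$ for every $i$, and since $1\in\widetilde{\Op}'$ as well, we conclude $\Op'\subseteq\widetilde{\Op}'$.

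It remains to upgrade this to an equality, which I would do by comparing indices in $\Op$. Both $\B$ and $\tilde\B$ are $\OO_\pp$-bases of $\Op$, and with respect to them the two sublattices $\Op'$ and $\widetilde{\Op}'$ are cut out by the \emph{same} change-of-basis matrix $\diag(1,C)$; hence $[\Op:\Op']=(\det C)=[\Op:\widetilde{\Op}']$. In particular $\det C\neq 0$, so $\{1,d_1,d_2,d_3\}$ is genuinely a basis and $\Op'$ has finite index in $\Op$. Combining the inclusion $\Op'\subseteq\widetilde{\Op}'$ with the equality of indices forces $\Op'=\widetilde{\Op}'$, which is a maximal suborder of $\Op$ in correspondence with $g$, as claimed.

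The main obstacle is the middle step, namely verifying that the maximal suborder $\widetilde{\Op}'$ contains $\pp\Op$ (equivalently, that $\Op/\widetilde{\Op}'$ is annihilated by $\pp$ even when the index is $\pp^2$, rather than being a cyclic $\OO_\pp/\pp^2$). Everything else is formal once this is in place: the congruence $d_i\equiv\tilde d_i\pmod{\pp\Op}$ is immediate from the integrality of the table coefficients, and the index computation uses only that $\B$ and $\tilde\B$ span the same lattice.
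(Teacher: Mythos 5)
Your proof is correct and follows the same overall strategy as the paper: pass to a genuine good basis $\tilde\B$, form the $\tilde d_i$ with the same coefficients, use $\pp\Op\subseteq\widetilde{\Op}'$ together with $d_i\equiv\tilde d_i\pmod{\pp\Op}$ to get $\Op'\subseteq\widetilde{\Op}'$, and then show the inclusion is an equality. The only genuine difference is in the last step. The paper closes the argument by comparing discriminants: it verifies, case by case from Table \ref{table:suborders}, that $d(\Lambda_\pp)=\pp^e d(\Op)=d(\Op')$ for the lattice $\Lambda_\pp=\<1,d_1,d_2,d_3>_{\OO_\pp}$. You instead observe that $\Op'$ and $\widetilde{\Op}'$ are cut out of $\Op$, with respect to the two bases $\B$ and $\tilde\B$, by the same transition matrix $\diag(1,C)$, so $[\Op:\Op']=(\det C)=[\Op:\widetilde{\Op}']$, and equality follows from the inclusion. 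This is a uniform argument that avoids the case-by-case verification entirely (note that the entries of Table \ref{table:suborders} are indeed homogeneous in $e_1,e_2,e_3$, so the block-diagonal form of the transition matrix is legitimate), at the mild cost of invoking the fact that $[\Lambda:\Lambda']$ is generated by the determinant of any matrix carrying a basis of $\Lambda$ to a generating set of $\Lambda'$ over the discrete valuation ring $\OO_\pp$. Your identification of $\pp\Op\subseteq\widetilde{\Op}'$ as the one nontrivial external input matches the paper, which relies on the same containment (Proposition \ref{prop:e=1_o_2}, whose displayed inclusion should be read as $\pp\O\subseteq\O'$).
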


\begin{proof}
Let $\tilde\B=\{1,\tilde e_1,\tilde e_2,\tilde e_3\}$ be a good basis of
$\Op$ as in Definition \ref{def:qbuena}. In terms of these elements and
the form $g$, define elements $\tilde d_1,\tilde d_2,\tilde d_3$
according to Table \ref{table:suborders}, and let $\Op'=\< 1,
\tilde d_1, \tilde d_2, \tilde d_3>_{\OO_\pp}$. The table shows
that $\tilde d_i \equiv d_i\mod (\pp \Op)$ for every $1\leq i\leq3$. Then,
since $\pp\Op\subseteq \Op'$, we have that
\[
\Op'=\Op'+\pp\Op=\< 1, \tilde d_1, \tilde d_2, \tilde
d_3>_{\OO_\pp}+\pp\Op \supseteq \< 1, d_1, d_2, d_3>_{\OO_\pp}.
\]
Letting $\Lambda_\pp$ denote the lattice on the right-hand side of this
equation, it suffices to see that $d(\Op')=d(\Lambda_\pp)$ to complete the
proof.

Let $e\in\{1,2\}$ be such that $[\Op:\Op']=\pp^e$. Following Table
\ref{table:suborders} case by case, it can be proved that $d(\Lambda_\pp)=\pp^e
d(\Op)$. Since $d(\Op')=\pp^e d(\Op)$, we are done.

\end{proof}

\begin{remark}\label{rmk:precision_param}
Let $m=v_\pp(d(\Op'))$. The proof shows that, when constructing the $d_i$'s, the
elements $\alpha_0,\alpha_1,\dots$ in Table \ref{table:suborders} need to be
calculated only up to precision $\pip^{m+1}$, since in that case the ideal
$d(\Lambda_\pp)$ remains unchanged.

It shows also that 
$\{1,d_1,d_2,d_3\}$ needs not to be a quasi-good
basis for $\Op'$, since we only get that $\tilde d_i \equiv d_i\mod (\pp \Op)$.
Nevertheless, since $\pp^2\Op\subseteq\pp\Op'$, it is a quasi-good basis if the
stronger congruence $\tilde e_i \equiv e_i \mod(\pp^2\Op)$ holds.

\end{remark}

Proposition \ref{prop:qbuena=>subord} shows that obtaining quasi-good bases is
enough for our purpose of computing suborders. In what follows we show how to
obtain these bases.

\medskip

Let $f=\<1,a,b>$ be the form in correspondence with $\Op$, and let
$\E=\{f_0,f_1,f_2,f_3\}$ be a basis of $\Op^\vee$ satisfying
(\ref{eqn:base-dual}). The existence of good bases implies that there exists
$C\in GL_3(\OO_\pp)$ such that $2ab\cdot C^t M_{\E} C = \diag(1,a,b)$.
Hence, $2ab\cdot M_{\E}\in M_3(\OO_\pp)$ and $\det(M_{\E})=8^{-1}(ab)^{-2}
u^2$ for some $u\in\OO_\pp^\times$.

\begin{prop}\label{prop:adj=>quasidist}
Let $n = 2v_\pp(a)+1$. Assume that $\E$ satisfies the following conditions.

\begin{enumerate}
\item There exists $\tilde b\in\OO_\pp$ such that
\[
 2ab\cdot M_{\E} \equiv \diag(1,a,\tilde b)
\mod (M_3(\pp^n \OO_\pp)).
\]
\item $\det(M_{\E})=8^{-1}(ab)^{-2} u^2$.
\end{enumerate}

Then, $\E^\dagger$ is a quasi-good basis of $\Op$.
\end{prop}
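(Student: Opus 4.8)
The plan is to use Proposition \ref{prop:dualidad} in reverse: I would perturb $\E$ to a nearby basis $\tilde\E$ of $\Op^\vee$, still satisfying \eqref{eqn:base-dual}, for which the exact identity $2ab\cdot M_{\tilde\E}=\diag(1,a,b)$ holds; then $\tilde\E^\dagger$ is automatically a good basis of $\Op$. To conclude that $\E^\dagger$ is quasi-good in the sense of Definition \ref{def:qbuena}, it then suffices to arrange the perturbation to be small enough that $\tilde\E^\dagger\equiv\E^\dagger\pmod{\pp\Op}$.

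Concretely, I would seek $C\in GL_3(\OO_\pp)$ acting on $\{f_1,f_2,f_3\}$ and fixing $f_0$ with $2ab\cdot C^t M_\E C=\diag(1,a,b)$, built by a $\pp$-adic diagonalization of $G:=2ab\cdot M_\E$ that starts from the approximate shape $G\equiv\diag(1,a,\tilde b)\pmod{\pp^n}$ of hypothesis (1). Using the unit pivot $G_{11}\equiv 1$ to clear the first row and column involves only coefficients already in $\pp^n$, so that step is congruent to the identity; diagonalizing the remaining block pivots on the entry $\equiv a$, the only place a division of valuation $v_\pp(a)$ occurs; and the three diagonal entries are then rescaled to $1,a,b$, the needed square roots of $1$-units existing by Hensel's Lemma since $\pp\nmid 2$. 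Hypothesis (2), which says $\det G=ab\cdot u^2$ agrees with $\det\diag(1,a,b)=ab$ up to the square of a unit, is exactly what guarantees that the third entry $\tilde b$ differs from $b$ by a square unit, so that this last rescaling is itself congruent to the identity modulo $\pp$.

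The precision bookkeeping is the crux, and it is what forces $n=2v_\pp(a)+1$: the quadratic nature of the equivalence, combined with the single pivot of valuation $v_\pp(a)$, makes the Newton/Hensel correction contract precisely when the starting error has valuation larger than $2v_\pp(a)$, and in that range the diagonalizing matrix comes out congruent to the identity modulo $\pp$. Since $e_i=4ab\cdot f_j\bar f_k$ is bilinear in the $f$'s and carries the weight $ab$, replacing $\E$ by $\tilde\E=\E C$ with $C\equiv I\pmod{\pp}$ changes each $e_i$ only within $\pp\Op$, which yields $\tilde\E^\dagger\equiv\E^\dagger\pmod{\pp\Op}$ as required. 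I expect the main obstacle to be exactly this quantitative step: checking, along the valuations of $a$ and $b$, that the unavoidable loss of $v_\pp(a)$ digits in the pivoting is absorbed by the starting precision $\pp^n$, so that the perturbation genuinely lands in $\pp\Op$ and the corrected basis is good.
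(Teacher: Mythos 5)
Your proposal is correct and follows essentially the same route as the paper: its proof performs exactly the iterative $\pp$-adic diagonalization you describe (the quantitative step you defer is isolated as Lemma \ref{lemma:levantamiento}, whose Newton-type correction $C'=C+\pip^{m-r}C_0$ gains a digit of precision precisely when $m>2r$ with $r=v_\pp(a)$, which is why $n=2v_\pp(a)+1$), then rescales by Hensel square roots of $1$-units and transfers the resulting congruence $\tilde f_i\equiv f_i \pmod{\pp\Op^\vee}$ to $\E^\dagger$ via the containment $4ab\cdot\Op^\vee\Op^\vee\subseteq\Op$. All three of your key observations --- the unit pivot leaving the first step congruent to the identity, the single loss of $v_\pp(a)$ digits at the middle pivot, and the determinant hypothesis forcing the final rescaling to be trivial modulo $\pp$ --- correspond exactly to the paper's argument.
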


\begin{remark}\label{rmk:congr=>qbuena}

The congruence in \textit{(a)} is the really relevant hypothesis. If this
congruence is satisfied and $u\in\OO_\pp^\times$ is such that
$\det(M_{\E})=8^{-1}(ab)^{-2}u^2$, then the basis $\{f_0,f_1,f_2,u^{-1} f_3\}$
satisfies \textit{(a)} and also \textit{(b)}.

\end{remark}

The proof of Proposition \ref{prop:adj=>quasidist} is based on the following
lifting lemma.

\begin{lemma}\label{lemma:levantamiento}
Let $r, m$ be non negative integers such that $m>2r$, and let $A\in
M_3(\OO_\pp)$ be a symmetric matrix. Suppose that there exists $C\in
GL_3(\OO_\pp)$
such that
\[
 C^t A C \equiv \diag(\alpha,\beta,\gamma)
\mod (M_3(\pp^m \OO_\pp)),
\]
with $v_\pp(\alpha)=0$ and $v_\pp(\beta)=r$. Then, there exists $C'\in
GL_3(\OO_\pp)$
satisfying $C'\equiv C \mod (M_3(\pp^{m-r} \OO_\pp))$ such that
\[
 C'^t A C' \equiv \diag(\alpha',\beta',\gamma')
\mod (M_3(\pp^{m+1} \OO_\pp)),
\]
with $\alpha'\equiv\alpha\mod(\pp^{m-r}\OO_\pp)$ and
$\beta'\equiv\beta\mod(\pp^m\OO_\pp)$.
\label{lemma:diag}
\end{lemma}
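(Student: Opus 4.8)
The plan is to carry this out as a single Newton/Hensel step, improving the approximate diagonalization by one $\pp$-adic digit. Write $D=\diag(\alpha,\beta,\gamma)$ and $B=C^tAC$; since $A$ is symmetric so is $B$, and by hypothesis $B=D+\pp^m E$ for some symmetric $E\in M_3(\OO_\pp)$. I look for the improved transition matrix in the form $C'=C\,(I_3+\pp^{m-r}N)$ with $N\in M_3(\OO_\pp)$ to be chosen. Any such $C'$ is automatically congruent to $C$ modulo $\pp^{m-r}$ and lies in $GL_3(\OO_\pp)$, since $I_3+\pp^{m-r}N\equiv I_3\bmod\pp$ has unit determinant. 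Thus the only task is to choose $N$ so that the off-diagonal entries of $C'^tAC'$ vanish modulo $\pp^{m+1}$ while the diagonal is controlled as required.

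Expanding gives $C'^tAC'=B+\pp^{m-r}(N^tB+BN)+\pp^{2(m-r)}N^tBN$. Here the hypothesis $m>2r$ enters twice: first, $2(m-r)\ge m+1$, so the quadratic term is negligible modulo $\pp^{m+1}$; second, writing $N^tB+BN=N^tD+DN+\pp^m(N^tE+EN)$, the last term contributes $\pp^{2m-r}$, and $2m-r\ge m+1$ because $m>r$. Hence modulo $\pp^{m+1}$ one has $C'^tAC'\equiv D+\pp^m E+\pp^{m-r}(N^tD+DN)$. This reduces the whole problem to the three scalar congruences coming from the off-diagonal positions $(1,2),(1,3),(2,3)$ — the matrix being symmetric, each unordered pair gives a single condition — together with bookkeeping on the three diagonal entries.

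I would take $N$ strictly upper triangular, i.e.\ $N_{ij}=0$ for $i\ge j$. Then $(N^tD+DN)_{ii}=0$, so the diagonal entries of $C'^tAC'$ are $d_i+\pp^mE_{ii}$ modulo $\pp^{m+1}$; in particular $\alpha'\equiv\alpha$ and $\beta'\equiv\beta$ modulo $\pp^m$, which is at least as strong as required. For the off-diagonal position $(i,j)$ with $i<j$, dividing the congruence $\pp^mE_{ij}+\pp^{m-r}d_iN_{ij}\equiv0\bmod\pp^{m+1}$ by $\pp^{m-r}$ turns it into $d_iN_{ij}\equiv-\pp^rE_{ij}\pmod{\pp^{r+1}}$. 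For $(1,2)$ and $(1,3)$ this is immediate, since $d_1=\alpha$ is a unit by the hypothesis $v_\pp(\alpha)=0$, so $N_{1j}:=-\alpha^{-1}\pp^rE_{1j}$ works.

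The one real point, and the place where the remaining hypothesis is used, is the position $(2,3)$: there $d_2=\beta$, and since we are given no information on $v_\pp(\gamma)$ we must solve through $\beta$. Using $v_\pp(\beta)=r$, write $\beta=\pp^r u$ with $u\in\OO_\pp^\times$; the condition $\pp^r uN_{23}\equiv-\pp^rE_{23}\bmod\pp^{r+1}$ collapses to $uN_{23}\equiv-E_{23}\bmod\pp$, which is solvable, with $N_{23}$ determined only modulo $\pp$. This loss of precision is harmless: altering $N_{23}$ by a multiple of $\pp$ changes $C'$ only modulo $\pp^{m-r+1}$ and the $(2,3)$ entry of $C'^tAC'$ only modulo $\pp^{m+1}$. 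Collecting the three choices yields the desired $N$, hence $C'$. I expect the $(2,3)$ correction to be the main obstacle: it is exactly the exact valuation $v_\pp(\beta)=r$ — rather than mere invertibility — that guarantees solvability, and it simultaneously explains the asymmetric precision recorded in the statement (closeness $\pp^{m-r}$ for $C'$ against diagonalization modulo $\pp^{m+1}$).
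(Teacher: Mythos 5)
Your proof is correct and is essentially the paper's own argument: both perform a single Hensel-type step, writing $C'=C+\pip^{m-r}C_0$ (equivalently $C'=C(I+\pp^{m-r}N)$), solving for the correction entrywise by dividing by the pivots $\alpha$ (a unit) and $\beta$ (of valuation $r$), and invoking $m>2r$ to discard the quadratic term via $2(m-r)\geq m+1$. The only difference is bookkeeping: your explicit strictly upper-triangular $N$ versus the paper's correction matrix produced by row operations on $C^tAC$.
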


\begin{proof}

Write
\[
 C^t A C = \diag(\alpha,\beta,\gamma)
+  \pip^m \Bigg( \begin{array}{ccc}
a & b & c\\
b & d & e\\
c & e & f
\end{array} \Bigg),
\]
with $a,b,\dots,f\in\OO_\pp$. We claim that there exists a matrix $C_0\in
GL_3(\OO_\pp)$ such that
\[
 C_0^t A C = \Bigg(\begin{array}{c c c}
              \alpha + a \pip^m & 0 & c'\pip^m\\
              -b\pip^r & \beta+d'\pip^m & e'\pip^m \\
          -c\pip^r & -e\pip^r & \gamma+f'\pip^m
             \end{array}
\Bigg),
\]
with $c',d',e',f'\in \OO_\pp$. This can be shown by performing row operations
on $C^t A C$, using the diagonal entries as pivots to first obtain zeroes at
the $(3,1),(2,1),(1,2)$ and $(3,2)$ entries, and then obtain
$-c\pip^r,-e\pip^r$ and $-b\pip^r$ at the $(3,1),(3,2)$ and $(2,1)$ entries
respectively.

Let $C'=C+\pip^{m-r}C_0$. Then,
\[
 C'^t A C' = \Bigg(\begin{array}{c c c}
              \alpha' & 0 & c'\pip^{2m-r}\\
              0 & \beta' & e'\pip^{2m-r} \\
          c'\pip^{2m-r} & e'\pip^{2m-r} & \gamma'
             \end{array}
\Bigg)+\pip^{2(m-r)}C_0^t A C_0.
\]
where $\alpha'=\alpha+a\pip^m + 2\pip^{m-r}(\alpha+a\pip^m)$ and
$\beta'=\beta+d'\pip^m + 2\pip^{m-r}(\beta+d'\pip^m)$. Since $2(m-r)\geq
m+1$, we are done.
\end{proof}

\begin{proof}[Proof of Proposition \ref{prop:adj=>quasidist}]

Let $r=v_\pp(a)$. By letting $m\to\infty$ in the previous lemma, we get a matrix
$C=(c_{ij})\in GL_3(\OO_\pp)$
satisfying $C\equiv I \mod(M_3(\pp^{2r+1} \OO_\pp))$ such that
\[
 2ab\cdot C^t  M_{\E} C =\diag(\alpha,\beta,\gamma),
\]
with $\alpha\equiv 1\mod(\pip^{r+1})$ and $\beta\equiv a\mod(\pip^{2r+1})$.
Using Hensel's lemma, take $x_1,x_2\in \OO_\pp^\times$ satisfying $x_i\equiv
1\mod(\pip^{r+1})$ such that $\alpha=x_1^2$ and $\beta= x_2^2 a$. Taking
determinants we see that $\gamma = x_3^2 b$, where $x_3=\frac{\det(C)}{x_1
x_2}$.

Now let $\tilde C=C\cdot\diag(x_1,x_2,x_3)^{-1}$. Then $\tilde C$ satisfies that
\[
 2ab\cdot {\tilde C}^t M_{\E} \tilde C =\diag(1,a,b).
\]
Let $\tilde f_i =\sum_{j=1}^3 \tilde c_{ji} f_j$, where $\tilde C=(\tilde
c_{ij})$, let $\tilde f_0=f_0$, and let
$\tilde\E=\{\tilde{f_0},\tilde{f_1},\tilde{f_2},\tilde{f_3}\}$. Then
$\tilde{\E}^\dagger$ is a good basis of $\Op$, for (\ref{eqn:dual-qbuena}) is
verified by $M_{\tilde\E}$. The congruences satisfied by the $x_i$'s and $C$
imply that $\tilde f_i \equiv f_i \mod (\pp \Op^\vee)$ for 
% de hecho, consigo congruencia módulo p^{r+1}...
$1\leq i\leq3$. Hence $\E^\dagger$ is a quasi-good basis of $\Op$, since
Proposition 3.2 of \cite{brz-gor} gives that $4ab \cdot
\Op^\vee \Op^\vee \subseteq \Op$.

\end{proof}

\subsection*{From local to global}

Let $\Lambda$ be a lattice in $B$, and let
$\Lambda_\pp'\subseteq\Lambda_\pp$ be a sub\-lattice of index $\pp^e$, where $e$
is a non-negative integer. Let
$\Lambda' \subseteq B$ be the lattice given by
\[
 \Lambda_\qq'=\begin{cases}
        \Lambda_\qq & \text{ if }\qq\neq \pp, \\
        \Lambda_\pp' & \text{ if }\qq=\pp.
       \end{cases}
\]
Given a set of generators for $\Lambda$ as an $\OO$-module and a
set of generators for $\Lambda_\pp'$ as an $\OO_\pp$-module, how can
we construct a set of generators for $\Lambda'$ as an $\OO$-module?

Assume that $\Lambda=\< v_1,v_2,\dots,v_m >_{\OO}$
and that $\Lambda_\pp'=\< w_1,w_2,\dots,w_n
>_{\OO_\pp}$. For each $i$ write $w_i=\sum_j a_{ij} v_j$,
with $a_{ij}\in \OO_\pp$. There exist elements $b_{ij}\in \OO$ and
$c_{ij}\in \pip^e \OO_\pp$ such that $a_{ij}=b_{ij} + c_{ij}$ (they
can be constructed, for example, looking at the $\pp$-adic expansion of the
$a_{ij}$). Consider the vectors $\tilde w_i=\sum_j b_{ij}v_j$ (which belong to
$\Lambda$). Then we have

\begin{prop}\label{prop:latticeconstruction}
 \[
    \Lambda'= \pp^e \Lambda + \< \tilde w_1,\tilde w_2,\dots,\tilde
w_n
  >_{\OO}.
 \]
\end{prop}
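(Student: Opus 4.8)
The plan is to verify the claimed identity one prime at a time, relying on the standard fact that a lattice in $B$ is recovered from its localizations as $\Lambda=\bigcap_\qq\Lambda_\qq$, so that two lattices coincide precisely when they agree at every prime $\qq$. Write $M=\pp^e\Lambda+\langle\tilde w_1,\dots,\tilde w_n\rangle_{\OO}$ for the right-hand side. Since $\pp^e\Lambda\subseteq\Lambda$ and each $\tilde w_i=\sum_j b_{ij}v_j$ lies in $\Lambda$, we have $M\subseteq\Lambda$, and in particular $M_\qq\subseteq\Lambda_\qq$ for every $\qq$. It therefore suffices to compute $M_\qq$ and compare it with $\Lambda'_\qq$, recalling from the construction that $\Lambda'_\qq=\Lambda_\qq$ for $\qq\neq\pp$ and $\Lambda'_\pp=\Lambda_\pp'$.

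First I would dispose of the primes $\qq\neq\pp$. Here $\pp^e\OO_\qq=\OO_\qq$, so $(\pp^e\Lambda)_\qq=\Lambda_\qq$, whence $M_\qq\supseteq\Lambda_\qq$. Combined with the inclusion $M_\qq\subseteq\Lambda_\qq$ already noted, this gives $M_\qq=\Lambda_\qq=\Lambda'_\qq$, as required. The case $\qq=\pp$ is the substance of the argument. Localizing, $M_\pp=\pp^e\Lambda_\pp+\langle\tilde w_1,\dots,\tilde w_n\rangle_{\OO_\pp}$. By construction $w_i-\tilde w_i=\sum_j c_{ij}v_j$ with $c_{ij}\in\pip^e\OO_\pp$, so $w_i-\tilde w_i\in\pp^e\Lambda_\pp$; replacing the $\tilde w_i$ by the $w_i$ modulo $\pp^e\Lambda_\pp$ yields
\[
 M_\pp=\pp^e\Lambda_\pp+\langle w_1,\dots,w_n\rangle_{\OO_\pp}=\pp^e\Lambda_\pp+\Lambda_\pp'.
\]
It then remains only to show $\pp^e\Lambda_\pp\subseteq\Lambda_\pp'$, since this collapses the sum to $\Lambda_\pp'$ and gives $M_\pp=\Lambda_\pp'=\Lambda'_\pp$, completing the proof.

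I expect this last inclusion to be the main obstacle, and it is precisely where the hypothesis on the index is used. As $\OO_\pp$ is a discrete valuation ring and $\Lambda_\pp'\subseteq\Lambda_\pp$ are free $\OO_\pp$-modules of rank $4$, the structure theorem for modules over a principal ideal domain provides a basis $u_1,\dots,u_4$ of $\Lambda_\pp$ such that $\pip^{a_1}u_1,\dots,\pip^{a_4}u_4$ is a basis of $\Lambda_\pp'$, so that $\Lambda_\pp/\Lambda_\pp'\cong\bigoplus_i\OO_\pp/\pp^{a_i}$. Applying the endomorphism $u_i\mapsto\pip^{a_i}u_i$ in the determinant definition of the index shows $[\Lambda_\pp:\Lambda_\pp']=\pp^{\sum_i a_i}$. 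The assumption that this index equals $\pp^e$ forces $\sum_i a_i=e$, hence $a_i\le e$ for every $i$, so $\pip^e$ annihilates the quotient $\Lambda_\pp/\Lambda_\pp'$. This is exactly the inclusion $\pp^e\Lambda_\pp\subseteq\Lambda_\pp'$, and the argument is complete.
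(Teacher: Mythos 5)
Your proof is correct and follows essentially the same route as the paper: check equality of the two lattices at every localization, with the primes $\qq\neq\pp$ handled by $\pp^e\OO_\qq=\OO_\qq$ and the prime $\pp$ handled via the congruence $\tilde w_i\equiv w_i \bmod \pp^e\Lambda_\pp$ together with the inclusion $\pp^e\Lambda_\pp\subseteq\Lambda_\pp'$. The only difference is that you supply an explicit elementary-divisors justification of that last inclusion, which the paper simply asserts; this is a welcome addition rather than a deviation.
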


\begin{proof}
It is enough to check that these two lattices coincide at all
localizations. Denote by $\Lambda''$ the lattice in the right hand side.
\begin{itemize}
\item If $\qq \neq \pp$, then $\pi_\pp$ is a unit in $\OO_\qq$. So $\pp^e
  \Lambda_\qq = \Lambda_\qq$, which implies that $\Lambda''_\qq= \Lambda_\qq+
\<
  \tilde w_1,\tilde w_2,\dots,\tilde w_n >_{\OO_\qq}=\Lambda_\qq$.

 \item Since $\pp^e\Lambda_\pp\subseteq\Lambda_\pp'$, we have that
$\Lambda''_\pp
\subseteq
   \Lambda_\pp$; the reverse inclusion is deduced from the fact that $\tilde w_i
   \equiv w_i\mod(\pp^e\Lambda_\pp)$.
\end{itemize}
\end{proof}

\begin{remark}\label{rmk:hnf}

Using the Hermite Normal Form algorithm (see \cite{cohen}, Chapter I), for every
lattice in $B$ we can compute a generating set over $\OO$ with at most five
elements. In particular, this can be done for the sum describing $\Lambda'$, and
we can assume that $\Lambda$ is given in this way.

\end{remark}

\subsection*{The algorithm}

We are now ready to prove our first main result, which we recall here.

\begin{thma}[]\label{thm:suborder}
 
There is an algorithm that, given a Bass order $\O$ in $B$, computes
suborders of $\O$ of any given genus.

\end{thma}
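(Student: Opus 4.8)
The plan is to assemble the algorithm from the local-to-global machinery already developed, reducing the construction of a suborder of any given genus to a sequence of maximal-suborder steps, each performed locally and then globalized.

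\medskip

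The plan is to describe and justify the algorithm as follows. The input is the Bass order $\O$, specified by an $\OO$-generating set (via Remark \ref{rmk:hnf}, with at most five generators), together with the target genus. By the discussion following Table \ref{table:qforms}, a genus of Bass orders is determined by the family $(f_\pp)_\pp$ of ternary quadratic forms from the table, which differs from the family of $\O$ only at finitely many primes $\pp$; call these the \emph{relevant} primes. For each relevant prime $\pp$, the forms $f_\pp$ (corresponding to $\Op$) and $g_\pp$ (the target form) are two vertices of the appropriate graph in Figure \ref{fig:grafo}, and Proposition \ref{prop:e=1_o_2} together with the graph structure shows that $g_\pp$ can be reached from $f_\pp$ by a finite path of \emph{beneath} steps, each of which lowers the order to a maximal suborder of index $\pp$ or $\pp^2$. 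Thus the global construction reduces to iterating a single local step, one edge of the graph at a time.

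\medskip

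First I would show how to perform one such edge. Given the current order (initially $\O$), at the chosen relevant prime $\pp$ we compute a quasi-good basis of $\Op$: by Remark \ref{rmk:hnf} we have global generators, so we obtain a basis $\E$ of $\Op^\vee$ satisfying \eqref{eqn:base-dual}, diagonalize the associated form $2ab\cdot M_\E$ only modulo $\pp^n$ with $n=2v_\pp(a)+1$ (a finite computation), and adjust the last dual basis vector as in Remark \ref{rmk:congr=>qbuena} so that hypotheses (a) and (b) of Proposition \ref{prop:adj=>quasidist} hold. That proposition then guarantees $\E^\dagger$ is a quasi-good basis of $\Op$. Feeding this basis and the chosen form $g$ beneath $f$ into Table \ref{table:suborders} and Proposition \ref{prop:qbuena=>subord} produces elements $d_1,d_2,d_3$ spanning a maximal suborder $\Op'=\<1,d_1,d_2,d_3>_{\OO_\pp}$ of the correct local isomorphism class, of index $\pp^e$ with $e\in\{1,2\}$. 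Finally, Proposition \ref{prop:latticeconstruction} globalizes this: from the global generators of the current order and the local generators $1,d_1,d_2,d_3$ of $\Op'$, we obtain global generators of the order $\O'$ agreeing with the current order away from $\pp$ and equal to $\Op'$ at $\pp$, which we then reduce via Hermite Normal Form (Remark \ref{rmk:hnf}).

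\medskip

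I would then conclude by iterating: applying this edge-step along the chosen path for each relevant prime eventually transforms $\O$ into a suborder whose local form is $g_\pp$ at every relevant prime and unchanged elsewhere, hence a suborder in the target genus, and termination is clear since each path is finite and there are finitely many relevant primes. The step I expect to be the main obstacle — and where the real content lies — is the quasi-good basis computation, since a naive approach would require diagonalizing a ternary form exactly over $\OO_\pp$; the point is that Propositions \ref{prop:adj=>quasidist} and \ref{prop:qbuena=>subord} let us work only modulo a small power of $\pp$ and with the parameters of Table \ref{table:suborders} to finite precision (Remark \ref{rmk:precision_param}), keeping every step effective. A minor point to verify is that, after a beneath-step, the resulting basis $\{1,d_1,d_2,d_3\}$ can be upgraded to a quasi-good basis of $\Op'$ for the next step, which Remark \ref{rmk:precision_param} addresses by noting it suffices to have started from the stronger congruence modulo $\pp^2\Op$.
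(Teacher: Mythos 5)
Your proposal is correct and follows essentially the same route as the paper: reduce to the construction of maximal suborders (one edge of Figure \ref{fig:grafo} at a relevant prime at a time), then perform each step via Proposition \ref{prop:adj=>quasidist} (quasi-good basis), Proposition \ref{prop:qbuena=>subord} with Table \ref{table:suborders} (local maximal suborder), and Proposition \ref{prop:latticeconstruction} (globalization), exactly as in Algorithm \ref{algo:suborder}. The additional points you raise — iterating along a path in the graph, finitely many relevant primes, and the precision issue for reusing the output basis addressed by Remark \ref{rmk:precision_param} — are consistent with, and slightly more explicit than, the paper's own exposition.
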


\begin{proof}

It suffices to give an algorithm which computes maximal suborders of $\O$ in any
given genus. So we assume that we are given a prime $\pp$, the form $f_\pp$
corresponding to $\Op$, and a form $g_\pp$ beneath $f_\pp$. The algorithm, which
we describe below, will return a Bass order $\O'\subseteq\O$ with $\Oq'=\Oq$ for
all $\qq\neq\pp$, and such that $\Op'$ corresponds to $g_\pp$.

\begin{algo}\label{algo:suborder}
\
\medskip

\noindent \textit{Step 1.} Use Proposition \ref{prop:adj=>quasidist} to find a
quasi-good basis for $\Op$.

\medskip

\noindent \textit{Step 2.} Use Proposition \ref{prop:qbuena=>subord} to
construct
a suborder $\Op'\subseteq\Op$ corresponding to the form $g_\pp$.

\medskip

\noindent \textit{Step 3.} Use Proposition \ref{prop:latticeconstruction} to
construct an
order $\O'$ such that
\[
 \O_\qq'=\begin{cases}
        \O_\qq & \text{ if }\qq\neq \pp, \\
        \O_\pp' & \text{ if }\qq=\pp.
       \end{cases}
\]
\end{algo}
\end{proof}

\section{Computing ideal classes representatives for suborders}

The aim of this section is to prove Theorem B. We start with some notation and
definitions.

If $\O$ is an order in $B$, we denote by $I(\O)$ the set
of left $\O$-ideals and by $\cl(\O)$ the set of equivalence
classes of left $\O$-ideals. The equivalence class of an ideal $I$ is
denoted by $[ I ]$. The \emph{norm} of an ideal $ I $ is defined as the
fractional ideal $N(I)\subseteq K$ generated by the elements $N(x)$ as $x$ runs
over
$ I $.

Throughout this section, let $\O'\subseteq\O$ be orders in $B$.

\begin{defi} If $ I \in I(\O)$, we define
\[
\ida=\{ J \in I(\O'):\O  J = I \},
\]
and we denote that set simply by $\Psi(I)$ when there is no possible confusion
on which are the orders under consideration.
\end{defi}

These sets will be considered for orders in $B$ as well as for their
completions. Both cases can and will be treated in an unified way.

\begin{remark}\label{rmk:psi_idelico}
 
When considering orders in $B$, identifying ideals with ideles, the set
$\Psi(I)$ is simply the preimage of $I$ under the natural map
\[
 {\widehat{\O'}}^\times \backslash {\widehat{B}}^\times \longrightarrow
{\widehat{\O}}^\times \backslash {\widehat{B}}^\times,
\]
where $\widehat{\phantom{x}}$ denotes tensor with $\widehat\ZZ$ over $\ZZ$.

\end{remark}

\begin{remark}\label{rmk:psi_PT}

The sets $\Psi(I)$ were studied in~\cite{villegas} to construct
modular forms of weight $2$ and level $p^2$ considering an order of
discriminant $p^2$, in the quaternion algebra over $\Q$ ramified at
$p$ and at $\infty$ - compare Corollary \ref{coro:ida=...} below and
equation (1) in \cite{villegas}.  They were later used in
\cite{tornaria} to construct modular forms of weight $3/2$ and level
$p^2$ considering in that algebra an order of class C at $p$.
\end{remark}

By $[\Psi(I)]$ we denote the set of classes of elements of $\Psi(I)$, i.e.
\[
[\Psi(I)]=\{[J]: J \in\Psi(I)\}.
\]
Note that if $[ I _1]=[ I _2]$, then
$[\Psi( I _1)]=[\Psi( I _2)]$.

\begin{prop}\label{prop:partic_clases_ideales}
\[
\cl(\O')=\coprod_{[I]\in \cl(\O)} [\Psi(I)].
\]
\end{prop}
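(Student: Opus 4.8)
The claim is that $\cl(\O')$ decomposes as the disjoint union of the sets $[\Psi(I)]$ as $[I]$ ranges over $\cl(\O)$. The plan is to exploit the idelic description from Remark \ref{rmk:psi_idelico}, where $\Psi(I)$ is the fiber over $I$ of the natural surjection
\[
\pi\colon {\widehat{\O'}}^\times \backslash {\widehat{B}}^\times \longrightarrow
{\widehat{\O}}^\times \backslash {\widehat{B}}^\times.
\]
Passing to ideal classes amounts to further quotienting both sides on the right by the global group $B^\times$. First I would establish that every class in $\cl(\O')$ arises from some $[\Psi(I)]$, i.e.\ that the union is \emph{exhaustive}: given a left $\O'$-ideal $J$, set $I=\O J$. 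One checks $I$ is a left $\O$-ideal (it is invertible and has left order $\O$), so $J\in\Psi(I)$ and hence $[J]\in[\Psi(I)]$. This covers $\cl(\O')$.

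The substance is \textbf{disjointness}: if $[\Psi(I_1)]$ and $[\Psi(I_2)]$ share a class, then $[I_1]=[I_2]$. Suppose $J_1\in\Psi(I_1)$ and $J_2\in\Psi(I_2)$ satisfy $[J_1]=[J_2]$, so $J_1=J_2 x$ for some $x\in B^\times$. Applying the left-multiplication-by-$\O$ operation $J\mapsto\O J$, which is compatible with right multiplication by elements of $B^\times$, gives $I_1=\O J_1=\O J_2 x=I_2 x$, whence $[I_1]=[I_2]$. The key point to verify here is that $\O(J x)=(\O J)x$ for $x\in B^\times$ and that $\O J_2$ recovers $I_2$, which follows from the defining property $\O J_2=I_2$ of membership in $\Psi(I_2)$. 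I would also note the well-definedness already recorded before the statement: $[I_1]=[I_2]$ implies $[\Psi(I_1)]=[\Psi(I_2)]$, so the sets indexed by $\cl(\O)$ are genuinely indexed by classes and not by ideals.

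Together these show the $[\Psi(I)]$ cover $\cl(\O')$ and are pairwise disjoint (two of them coincide precisely when the indexing classes coincide), which is exactly the coproduct decomposition. The main obstacle I anticipate is purely bookkeeping: making sure the operation $J\mapsto\O J$ is well defined on ideals and classes and that $\O J$ is again an invertible left $\O$-ideal, so that the fibers $\Psi(I)$ partition $I(\O')$ before descending to classes. This is cleanest in the idelic language of Remark \ref{rmk:psi_idelico}, where the statement is simply that the fibers of the surjection $\pi$ partition the source, and taking the further $B^\times$-quotient on the right sends this fiber partition to a partition of $\cl(\O')$ indexed by $\cl(\O)$.
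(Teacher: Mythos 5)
Your argument is correct and is essentially the paper's own proof: exhaustiveness via $I=\O J$ and disjointness via $I_1=\O J_1=\O J_2x=I_2x$, with the idelic description of Remark \ref{rmk:psi_idelico} mentioned as an alternative framing, exactly as the paper does.
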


\begin{proof}
This is straightforward using the idelic description of $\Psi(I)$, but we give a
direct proof.

Let $ J \in I(\O')$. Take $ I =\O J $. Then it is clear that
$ I \in I(\O)$ and $ J \in\Psi(I)$. This shows that the union on
the right hand side gives all of $\cl(\O')$.

It is clear that the union is disjoint, since if there are $J_i\in
\Psi_{\O'}^\O(I_i)$ for $i=1,2$ such that $[J_1]=[J_2]$,
then $[I_1]=[I_2]$. Indeed, let $x\in B^\times$ be such that
$J_1=J_2 x$. Then,
\[
 I_1=\O J_1=\O J_2 x=I_2 x.
\]
\end{proof}

This proposition shows that the sets $\Psi(I)$ can be used to give a system of
representatives for $\cl(\O')$, in terms of a system of representatives
for $\cl(\O)$. The next proposition shows that by constructing representatives
for $\cl(\O')$ using these sets, we will not enlarge the norms of the
$\O$-ideals that we start with.

\begin{prop}
Let $ J \in I(\O')$ such that $ J \subseteq I $. Then, $ J  \in
\Psi(I)$ if and only if $N(I) = N(J)$.
\end{prop}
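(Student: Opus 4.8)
The plan is to verify the equivalence one prime at a time, since both the condition $\O J = I$ and the equality of norm ideals $N(I)=N(J)$ can be tested locally, and the formation of the norm ideal commutes with localization. First I would record the one inclusion that is free of charge: because $\O_l(I)=\O$ we have $\O I = I$, so $J\subseteq I$ forces $\O J\subseteq \O I = I$ regardless of the hypotheses. Thus the condition $J\in\Psi(I)$, i.e. $\O J = I$, reduces to the reverse inclusion $I\subseteq \O J$, which I will check at each $\pp$.

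Fix a prime $\pp$. Since $I$ and $J$ are invertible, their localizations are principal, so I can write $I_\pp=\O_\pp x$ and $J_\pp=\O'_\pp y$ with $x,y\in B_\pp^\times$, where $\O'_\pp=\O_l(J_\pp)\subseteq \O_\pp=\O_l(I_\pp)$. Then $(\O J)_\pp=\O_\pp\O'_\pp y=\O_\pp y$, using that $\O'_\pp\subseteq\O_\pp$ gives $\O_\pp\O'_\pp=\O_\pp$. Because $1$ lies in each order and $N(1)=1$, the norm ideals collapse to $N(I_\pp)=N(x)\OO_\pp$ and $N(J_\pp)=N(y)\OO_\pp$. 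So, prime by prime, the entire statement reduces to comparing the two principal left ideals $\O_\pp y\subseteq \O_\pp x$ against the two norm ideals $N(y)\OO_\pp$ and $N(x)\OO_\pp$.

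The forward implication is then immediate: if $\O J=I$ then $\O_\pp y=\O_\pp x$ for every $\pp$, so $y=ux$ with $u\in\O_\pp^\times$, hence $N(x)$ and $N(y)$ differ by the unit $N(u)$ and $N(I_\pp)=N(J_\pp)$; holding at all primes, this gives $N(I)=N(J)$. The converse is the substantive direction, and where I expect the real work to sit. From $\O_\pp y\subseteq\O_\pp x$ I can write $y=zx$ with $z\in\O_\pp$, whence $N(y)=N(z)N(x)$, and the hypothesis $N(I_\pp)=N(J_\pp)$ forces $N(z)\in\OO_\pp^\times$. The crux is then the elementary but essential fact that an element $z$ of an order whose reduced norm is a unit is itself a unit of the order: since $z$ is integral one has $\tr(z),N(z)\in\OO_\pp$, so $\overline{z}=\tr(z)-z\in\O_\pp$ and $z^{-1}=\overline{z}\,N(z)^{-1}\in\O_\pp$, giving $z\in\O_\pp^\times$ and therefore $\O_\pp y=\O_\pp x$, i.e. $(\O J)_\pp=I_\pp$. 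As this holds at every $\pp$, I conclude $\O J=I$, that is $J\in\Psi(I)$. Apart from the bookkeeping of checking that the norm ideal localizes correctly and that local principality permits replacing lattices by single generators, the only genuine content is this norm-is-a-unit criterion for elements of an order.
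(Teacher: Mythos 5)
Your proof is correct and follows essentially the same route as the paper's: localize at each prime, write $I_\pp=\O_\pp x$ and $J_\pp=\O'_\pp z x$ with $z\in\O_\pp$, and observe that $N(I_\pp)=N(J_\pp)$ holds exactly when $z\in\O_\pp^\times$, which is exactly when $\O_\pp J_\pp=I_\pp$. You merely make explicit the step the paper leaves implicit, namely that an element of an order with unit reduced norm is a unit via $z^{-1}=\bar z\,N(z)^{-1}$.
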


\begin{proof}
Let $\qq$ be a prime of $\OO$. Since $J_\qq\subseteq I_\qq$ we can write
$ I _\qq= \Oq x_\qq$ and $ J _\qq = \O'_\qq z_\qq x_\qq$, with
$z_\qq\in\Oq$. Then, $N(I_\qq)= N(J_\qq)$ if and only if
$z_\qq\in\Oq^\times$, which is equivalent to the equality $\Oq J_\qq=I_\qq$.
These local facts imply the global statement.
\end{proof}

We have an action of the group $\Or(I)^\times$ on $\Psi(I)$ by right
multiplication, which stabilizes the left $\O'$-ideal classes.

\begin{prop}\label{prop:accion_unid_a_der}
If $ J  \in \Psi(I)$, then the action of $\Or(I)^\times$ on $[J] \cap \Psi(I)$
is transitive and the stabilizer of $ J $ is $\Or(J)^\times$. In
particular, $\# \big([J]\cap \Psi(I)\big)= [\Or(I)^\times:\Or(J)^\times]$.
\end{prop}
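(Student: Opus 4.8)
The plan is to exhibit $[J]\cap\Psi(I)$ as a single orbit for the given action and then apply the orbit--stabilizer theorem. The entire argument rests on two elementary observations about how the defining condition $\O J = I$ interacts with right orders, so I would isolate those first.

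Before anything else I would check that the action is well defined on $[J]\cap\Psi(I)$. The point is that any $u\in\Or(I)^\times$ satisfies $Iu=I$: indeed $u\in\Or(I)$ gives $Iu\subseteq I$, while $u^{-1}\in\Or(I)$ gives $Iu^{-1}\subseteq I$ and hence $I=(Iu^{-1})u\subseteq Iu$. Consequently, for any $J'\in\Psi(I)$ we get $\O(J'u)=(\O J')u=Iu=I$, so $J'u\in\Psi(I)$; and since right multiplication by $u\in B^\times$ does not change the left $\O'$-ideal class, $J'u$ stays in $[J]$. Thus $\Or(I)^\times$ maps $[J]\cap\Psi(I)$ to itself.

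For transitivity I would take $J_1,J_2\in[J]\cap\Psi(I)$ and write $J_2=J_1 x$ with $x\in B^\times$, the key step being to prove $x\in\Or(I)^\times$. Applying $\O\cdot$ yields $I=\O J_2=\O J_1 x=Ix$, and $Ix=I$ forces $x\in\Or(I)$, while the same equality gives $Ix^{-1}=I$ and hence $x^{-1}\in\Or(I)$; therefore $x\in\Or(I)^\times$ sends $J_1$ to $J_2$. For the stabilizer the crux is the containment $\Or(J)\subseteq\Or(I)$: if $u\in\Or(J)$ then $Iu=\O Ju\subseteq\O J=I$, so $u\in\Or(I)$, and passing to units gives $\Or(J)^\times\subseteq\Or(I)^\times$. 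Then $u\in\Or(I)^\times$ fixes $J$ exactly when $Ju=J$, which (arguing with $Ju\subseteq J$ and $Ju^{-1}\subseteq J$ as above) is equivalent to $u\in\Or(J)^\times$; hence the stabilizer is precisely $\Or(J)^\times$.

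Finally, since transitivity identifies the orbit of $J$ with all of $[J]\cap\Psi(I)$ and the stabilizer is $\Or(J)^\times$, the orbit--stabilizer theorem gives $\#\big([J]\cap\Psi(I)\big)=[\Or(I)^\times:\Or(J)^\times]$. I expect the only subtle points to be the two ``$Ix=I\Rightarrow x\in\Or(I)^\times$'' implications and the containment $\Or(J)\subseteq\Or(I)$; once these are in place the rest is bookkeeping with the definitions of $\Psi(I)$ and of left and right orders.
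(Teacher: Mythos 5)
Your proposal is correct and follows essentially the same route as the paper: transitivity is obtained by writing $J_2 = J_1 x$ and deducing $I = \O J_1 = \O J_2 x = Ix$, hence $x \in \Or(I)^\times$, and the stabilizer and cardinality claims then follow from the orbit--stabilizer theorem. The paper simply dismisses the well-definedness, stabilizer, and counting steps as ``clear,'' whereas you spell them out (including the containment $\Or(J)\subseteq\Or(I)$), which is a harmless elaboration rather than a different argument.
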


\begin{proof}
To prove that the action is transitive, let $ J_1 ,J_2\in \Psi(I)$ be
such that $[J_1]=[J_2]$. If $x\in B^\times$ is such that
$ J_1 =J_2 x$, then $x\in \Or(I)^\times$, since
$ I =\O J_1 =\O J_2 x= I x$. The other two statements are clear.
\end{proof}

The corollary below, which follows immediately, can be used to get
information about the class numbers, as we will see in Section
\ref{sect:ejemplo}.

\begin{coro}\label{coro:classnumber}
 \[
  \# \Psi(I) = \sum_{[J]\in\left[\Psi(I)\right]}[\Or(I)^\times:\Or(J)^\times].
 \]

\end{coro}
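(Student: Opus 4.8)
The plan is to deduce the identity directly from Proposition \ref{prop:accion_unid_a_der} by a standard partition-and-count argument, so that no new ideas are required. First I would observe that every $J\in\Psi(I)$ represents a single class $[J]\in[\Psi(I)]$, and that two elements of $\Psi(I)$ lie in a common block $[J]\cap\Psi(I)$ precisely when they are equivalent. Grouping the elements of $\Psi(I)$ according to the class they represent therefore gives a genuinely disjoint decomposition
\[
\Psi(I) = \coprod_{[J]\in[\Psi(I)]} \big([J]\cap\Psi(I)\big),
\]
indexed by the distinct classes occurring in $[\Psi(I)]$.

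Taking cardinalities on both sides yields
\[
\#\Psi(I) = \sum_{[J]\in[\Psi(I)]} \#\big([J]\cap\Psi(I)\big),
\]
and the proof is completed by substituting the count $\#\big([J]\cap\Psi(I)\big)=[\Or(I)^\times:\Or(J)^\times]$ furnished by Proposition \ref{prop:accion_unid_a_der}. The one point I would pause to justify is that the summand is well defined independently of the representative $J$ chosen in each block: if $J_1,J_2\in[J]\cap\Psi(I)$ with $J_1=J_2x$, then Proposition \ref{prop:accion_unid_a_der} forces $x\in\Or(I)^\times$, and conjugation by such an $x$ fixes $\Or(I)^\times$ while carrying $\Or(J_2)^\times$ to $\Or(J_1)^\times$; hence the two indices coincide.

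I do not expect a genuine obstacle here, since all of the substance lies in the preceding proposition, whose orbit–stabilizer statement is exactly what produces each block count. The only ancillary remark worth making concerns finiteness: locally the constraint $\O J=I$ forces $J_\qq=I_\qq$ at every prime with $\O_\qq=\O_\qq'$, leaving only finitely many choices at the finitely many remaining primes, so $\Psi(I)$ is finite as a set. Consequently $[\Psi(I)]$ is finite, each block is finite, and the displayed equality is an identity of finite quantities.
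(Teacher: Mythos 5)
Your proposal is correct and follows exactly the route the paper intends: the corollary is stated as an immediate consequence of Proposition \ref{prop:accion_unid_a_der}, obtained by partitioning $\Psi(I)$ into the blocks $[J]\cap\Psi(I)$ and summing the block sizes $[\Or(I)^\times:\Or(J)^\times]$. Your additional checks (well-definedness of the index under change of representative, and finiteness of $\Psi(I)$) are sound and merely make explicit what the paper leaves implicit.
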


In what follows, we describe two different methods for calculating the set
$\Psi(I)$ for a given $I\in I(\O)$. The first one will rely on the action of the
units described above, in the local setting, whereas the second one will only
involve global calculations.

\subsection*{Local method: The action by
\texorpdfstring{$(\Op')^\times\backslash\Op^\times$}{the local units}}

\begin{prop}\label{prop:accion_local_unid_a_der}
 
Let $I_\pp\in I(\Op)$, say $I_\pp=\Op x_\pp$. Then, the map
\begin{align*}
(\Op')^\times\backslash \Op^\times & \longrightarrow \Psi(I_\pp) \\
\alpha_\pp & \mapsto \Op' (\alpha_\pp x_\pp)
\end{align*}
is bijective.

\end{prop}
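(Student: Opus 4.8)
The plan is to prove that the map $\alpha_\pp \mapsto \Op'(\alpha_\pp x_\pp)$ is a well-defined bijection onto $\Psi(I_\pp)$ by exhibiting an explicit inverse and checking the three standard points: well-definedness, surjectivity, and injectivity. Throughout I will use the defining property that $J_\pp \in \Psi(I_\pp)$ means $J_\pp \in I(\Op')$ with $\Op J_\pp = I_\pp$, and the fact that any invertible left ideal over a local order is principal.

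First I would check well-definedness and that the image lands in $\Psi(I_\pp)$. Given $\alpha_\pp \in \Op^\times$, the lattice $\Op'(\alpha_\pp x_\pp)$ is a principal left $\Op'$-ideal, hence invertible with left order $\Op'$, so it lies in $I(\Op')$. Applying $\Op\cdot(-)$ gives $\Op \Op' \alpha_\pp x_\pp = \Op \alpha_\pp x_\pp = \Op x_\pp = I_\pp$, using $\Op'\subseteq\Op$ and $\alpha_\pp\in\Op^\times$; so the image is indeed in $\Psi(I_\pp)$. For well-definedness on the quotient, if $\alpha_\pp' = u\alpha_\pp$ with $u\in(\Op')^\times$, then $\Op'(u\alpha_\pp x_\pp)=\Op'(\alpha_\pp x_\pp)$, so the class in $(\Op')^\times\backslash\Op^\times$ determines the ideal.

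Next I would prove surjectivity. Let $J_\pp\in\Psi(I_\pp)$. Since $J_\pp$ is an invertible left $\Op'$-ideal over the local order $\Op'$, it is principal, say $J_\pp=\Op' y_\pp$ for some $y_\pp\in B_\pp^\times$. The condition $\Op J_\pp=I_\pp$ gives $\Op y_\pp=\Op x_\pp$, so $y_\pp = \beta_\pp x_\pp$ for some $\beta_\pp\in\Op$ with $\beta_\pp^{-1}\in\Op$ as well (because $\Op y_\pp=\Op x_\pp$ forces $x_\pp y_\pp^{-1}$ and $y_\pp x_\pp^{-1}$ both in $\Op$), i.e. $\beta_\pp=y_\pp x_\pp^{-1}\in\Op^\times$. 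Then $J_\pp=\Op'\beta_\pp x_\pp$, which is the image of the class of $\beta_\pp$. For injectivity, suppose $\Op'(\alpha_\pp x_\pp)=\Op'(\beta_\pp x_\pp)$ with $\alpha_\pp,\beta_\pp\in\Op^\times$; then $\Op'\alpha_\pp=\Op'\beta_\pp$, so $\alpha_\pp\beta_\pp^{-1}\in(\Op')^\times$, meaning $\alpha_\pp$ and $\beta_\pp$ represent the same class in $(\Op')^\times\backslash\Op^\times$.

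The main obstacle, such as it is, lies in the surjectivity step: one must justify that the element $\beta_\pp=y_\pp x_\pp^{-1}$ implementing the identity $\Op y_\pp=\Op x_\pp$ is genuinely a \emph{unit} of $\Op$, not merely an element of $\Op$. This is where the equality of the two left $\Op$-ideals $\Op y_\pp$ and $\Op x_\pp$ is used in both directions to pin $\beta_\pp$ into $\Op^\times$. Everything else is formal manipulation of principal ideals over a local order, relying only on the principality of invertible lattices locally, which is recorded in Section \ref{sec:intro}.
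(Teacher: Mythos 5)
Your proof is correct and is in substance the same as the paper's: the paper factors the map as $(\Op')^\times\backslash\Op^\times\to\Psi(\Op)\to\Psi(I_\pp)$ and cites Proposition \ref{prop:accion_unid_a_der} for the first bijection, whereas you verify the same facts directly, with the key point in both cases being that $\Op y_\pp=\Op x_\pp$ forces $y_\pp x_\pp^{-1}\in\Op^\times$. No gaps.
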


\begin{proof}
 
This map is the composition of the maps
\begin{align*}
(\Op')^\times\backslash \Op^\times &\longrightarrow\Psi(\Op),       
&\Psi(\Op)&\longrightarrow \Psi(I_\pp). \\
\alpha_\pp & \mapsto \Op'\alpha_\pp     & J_\pp & \mapsto J_\pp x_\pp  
\end{align*}
Both maps are bijective. This is clear for the second map, and for the first one
this follows by Lemma \ref{prop:accion_unid_a_der}, since all $\Op$-ideals are
equivalent.

\end{proof}

\begin{prop}\label{prop:psi_local_global}
Suppose that  $[\O:\O']=\pp^e$ for some $e\geq1$. Let $ I \in I(\O)$. The map
\begin{align*}
\ida &\longrightarrow \Psi_{\Op'}^{\Op}( I_\pp) \\
 J &\mapsto J_\pp
\end{align*}
is bijective. In particular, $\#\ida=[\Op^\times : (\Op')^\times]$.
\end{prop}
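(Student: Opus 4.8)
The plan is to prove bijectivity directly through the local--global principle for lattices, exploiting the fact that since $[\O:\O']=\pp^e$ we have $\Oq=\Oq'$ for every $\qq\neq\pp$. The key observation, which drives both injectivity and surjectivity, is a rigidity away from $\pp$: if $J\in\ida$, then at any prime $\qq\neq\pp$ the localization $J_\qq$ is a left $\Oq'=\Oq$-ideal, so $\Oq J_\qq=J_\qq$, and therefore $J_\qq=(\O J)_\qq=I_\qq$. Thus an element of $\ida$ is completely determined by its localizations, every one of which equals $I_\qq$ except possibly the one at $\pp$.

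First I would check that the map is well defined: if $J\in\ida$, localizing $\O J=I$ at $\pp$ gives $\Op J_\pp=I_\pp$, so $J_\pp\in\Psi_{\Op'}^{\Op}(I_\pp)$. Injectivity is then immediate from the rigidity above: if $J_1,J_2\in\ida$ satisfy $(J_1)_\pp=(J_2)_\pp$, then $(J_1)_\qq=I_\qq=(J_2)_\qq$ for all $\qq\neq\pp$ as well, and since a lattice is determined by its localizations we conclude $J_1=J_2$.

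For surjectivity I would glue. Given $\tilde J_\pp\in\Psi_{\Op'}^{\Op}(I_\pp)$, define a lattice $J$ by $J_\qq=I_\qq$ for $\qq\neq\pp$ and $J_\pp=\tilde J_\pp$; this is a genuine lattice, as its localizations agree with those of $I$ at all but one prime. I then verify the required properties by passing to localizations. The left order satisfies $\O_l(J)_\qq=\O_l(J_\qq)=\Oq=\Oq'$ for $\qq\neq\pp$ (since $J_\qq=I_\qq$) and $\O_l(J)_\pp=\O_l(\tilde J_\pp)=\Op'$ (since $\tilde J_\pp\in I(\Op')$), so $\O_l(J)=\O'$; invertibility holds because each $J_\qq$ is principal; and $(\O J)_\qq=\Oq I_\qq=I_\qq$ for $\qq\neq\pp$ while $(\O J)_\pp=\Op\tilde J_\pp=I_\pp$, so $\O J=I$. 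Hence $J\in\ida$ with $J_\pp=\tilde J_\pp$.

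The final assertion then follows by combining this bijection with Proposition \ref{prop:accion_local_unid_a_der}: writing $I_\pp=\Op x_\pp$, that proposition identifies $\Psi_{\Op'}^{\Op}(I_\pp)$ with the coset space $(\Op')^\times\backslash\Op^\times$, whose cardinality is the index $[\Op^\times:(\Op')^\times]$ (here $(\Op')^\times$ is a genuine subgroup of $\Op^\times$ because $\Op'\subseteq\Op$). I do not anticipate a serious obstacle; the only point demanding care is confirming that the glued $J$ is an \emph{invertible left $\O'$-ideal} and not merely a lattice, which is precisely what the localization check secures.
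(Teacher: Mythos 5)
Your proposal is correct and follows essentially the same route as the paper: the paper's proof also rests entirely on the observation that $J_\qq=I_\qq$ for all $\qq\neq\pp$ (which gives both injectivity and surjectivity by the local--global correspondence for lattices), and then cites Proposition \ref{prop:accion_local_unid_a_der} for the cardinality. You have merely written out in full the gluing and verification steps that the paper leaves implicit.
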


\begin{proof}
The fact that $I_\qq = J_\qq$ for all $\qq \neq\pp$ implies that the map is
bijective. The equality follows from Corollary
\ref{prop:accion_local_unid_a_der}.
\end{proof}

These propositions imply immediately the following result.

\begin{coro}\label{coro:accion_local_unid_a_der}
 
Suppose that $[\O:\O']=\pp^e$ for some $e\geq1$. Let $I\in I(\O)$, and write
$I_\pp=\Op x_\pp$. If $\{\alpha_j\}$ is a system of representatives for
$(\Op')^\times \backslash \Op^\times$, then $\Psi^\O_{\O'}(I)=\{J_j\}$, where
$J_j$ is
locally given by
\[
( J _j)_\id{q} = \begin{cases}
 I _\id{q} & \text{ if } \id{q} \neq \id{p},\\
\Op' (\alpha_j x_\pp) & \text{ if } \id{q} = \id{p}.
\end{cases}
\]
\end{coro}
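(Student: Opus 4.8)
The plan is to obtain the asserted description of $\ida$ by composing the two bijections just established. By Proposition \ref{prop:psi_local_global}, localization at $\pp$ yields a bijection $\ida \to \Psi_{\Op'}^{\Op}(I_\pp)$ sending $J \mapsto J_\pp$; and by Proposition \ref{prop:accion_local_unid_a_der}, the assignment $\alpha_\pp \mapsto \Op'(\alpha_\pp x_\pp)$ is a bijection $(\Op')^\times\backslash\Op^\times \to \Psi_{\Op'}^{\Op}(I_\pp)$. Composing the second map with the inverse of the first, I would send each class $\alpha_j$ to the unique $J_j \in \ida$ with $(J_j)_\pp = \Op'(\alpha_j x_\pp)$, obtaining a bijection from $\{\alpha_j\}$ onto $\ida$ and hence the equality $\ida = \{J_j\}$.

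Next I would identify $J_j$ at the primes $\qq \neq \pp$, which is where the hypothesis $[\O:\O']=\pp^e$ is used: this index condition forces $\O_\qq = \O'_\qq$ for every $\qq \neq \pp$. Since $J_j \in \ida$ means $\O J_j = I$, localizing gives $\O_\qq (J_j)_\qq = I_\qq$ for all $\qq$; for $\qq \neq \pp$ the left-hand side equals $(J_j)_\qq$, because $(J_j)_\qq$ is already a left $\O'_\qq = \O_\qq$-ideal, so $(J_j)_\qq = I_\qq$. This reproduces exactly the local description of $J_j$ given in the statement.

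I do not expect any real obstacle here: the entire content is carried by the two preceding propositions, and the argument is a formal composition of bijections together with the elementary observation about localizations away from $\pp$. The only point I would take care with is verifying that reconstructing $J_j$ from its localization at $\pp$ is consistent at the remaining primes, which is precisely what Proposition \ref{prop:psi_local_global} guarantees.
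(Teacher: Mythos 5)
Your argument is correct and is exactly what the paper intends: the corollary is stated there as an immediate consequence of Propositions \ref{prop:accion_local_unid_a_der} and \ref{prop:psi_local_global}, and your composition of the two bijections, together with the observation that $\O_\qq=\O'_\qq$ forces $(J_j)_\qq=I_\qq$ away from $\pp$, is precisely the intended (and omitted) verification.
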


\begin{remark}\label{rmk:localgenerator}
A way to construct a local generator at $\pp$ of an ideal $ I $ is to
consider the entry with minimum valuation at $\pp$ of the Gram matrix of a
generating set  $\{w_1, \ldots, w_5\}$ for $ I $ over $\OO$, since the norm is
generated by an element with minimum valuation in such matrix. If this minimum
is attached in the entry $(i,j)$, then a local generator is $w_i+w_j$ if $i \neq
j$, and $w_i$ if $i=j$.
\end{remark}

\begin{prop}\label{prop:unid_mod_p} Assume that $\pp\Op\subseteq \Op'$. Then,
the natural map
\[
\phi: (\Op')^\times \backslash \Op^\times \longrightarrow 
(\pp\Op
\backslash \Op')^\times \backslash (\pp\Op \backslash \Op)^\times.
\]
is bijective.
\end{prop}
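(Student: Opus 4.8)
The plan is to work modulo the two-sided ideal $\pp\Op$. Since by hypothesis $\pp\Op\subseteq\Op'$, reduction gives an inclusion of finite $\kp$-algebras $\overline{\Op'}\subseteq\overline{\Op}$ sharing the same identity, where I write $\overline{\Op}=\Op/\pp\Op=\pp\Op\backslash\Op$ and $\overline{\Op'}=\Op'/\pp\Op$; thus $\overline{\Op'}^\times\subseteq\overline{\Op}^\times$ and inverses computed in $\overline{\Op'}$ agree with those in $\overline{\Op}$. The key fact I would isolate first is the norm criterion for units: for $\alpha\in\Op$ one has $\tr(\alpha),N(\alpha)\in\OO_\pp$ and $\overline\alpha=\tr(\alpha)-\alpha\in\Op$, so from
\[
\alpha^{-1}=\big(\tr(\alpha)-\alpha\big)\,N(\alpha)^{-1}
\]
it follows that $\alpha\in\Op^\times$ if and only if $N(\alpha)\in\OO_\pp^\times$. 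Reducing the multiplicative norm modulo $\pp$ yields a multiplicative map $\overline N\colon\overline{\Op}\to\kp$ with $\overline N(\overline 1)=1$, so $\alpha\in\Op^\times$ is moreover equivalent to $\overline\alpha\in\overline{\Op}^\times$ (from $\overline\alpha\,\overline\alpha^{-1}=\overline1$ one gets $\overline N(\overline\alpha)\in\kp^\times$, i.e. $v_\pp(N(\alpha))=0$).

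Granting this, well-definedness and surjectivity are immediate. Any $\alpha\in\Op^\times$ maps to a unit $\overline\alpha\in\overline{\Op}^\times$, and replacing $\alpha$ by $u\alpha$ with $u\in(\Op')^\times$ only multiplies $\overline\alpha$ on the left by $\overline u\in\overline{\Op'}^\times$, so $\phi$ is well defined. For surjectivity I would take an arbitrary $\overline\gamma\in\overline{\Op}^\times$, lift it to any $\gamma\in\Op$ reducing to $\overline\gamma$; the norm criterion forces $\gamma\in\Op^\times$, and then $\phi\big((\Op')^\times\gamma\big)=\overline{\Op'}^\times\overline\gamma$.

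For injectivity, suppose $\alpha,\beta\in\Op^\times$ have $\overline{\Op'}^\times\overline\alpha=\overline{\Op'}^\times\overline\beta$, and set $w=\alpha\beta^{-1}\in\Op^\times$, so that $\overline w\in\overline{\Op'}^\times$; I must show $w\in(\Op')^\times$. First $w\in\Op'$: since $\overline w\in\overline{\Op'}=\Op'/\pp\Op$, there is $s\in\Op'$ with $w-s\in\pp\Op\subseteq\Op'$, whence $w\in\Op'$. The delicate half is showing $w^{-1}\in\Op'$, and this is where the conjugate formula does the work: $\tr(w)\in\OO_\pp$ is a scalar, hence $\tr(w)\in\Op'$, and $N(w)^{-1}\in\OO_\pp^\times$, so
\[
w^{-1}=\big(\tr(w)-w\big)\,N(w)^{-1}\in\Op',
\]
giving $w\in(\Op')^\times$ and therefore $(\Op')^\times\alpha=(\Op')^\times\beta$.

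I expect the only real subtlety to be this last descent step: a priori knowing $\overline w$ is a unit of $\overline{\Op'}$ only guarantees invertibility of $w$ \emph{modulo} $\pp\Op$ inside $\Op'$, and one must upgrade this to genuine invertibility in $\Op'$. The point that makes it go through is that $\Op'$ is an order, so it contains the scalars and is stable under conjugation; together with $\pp\Op\subseteq\Op'$ this lets the reduced-norm criterion for $\Op$ be transported to $\Op'$. Everything else is routine coset bookkeeping.
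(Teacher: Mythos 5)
Your proof is correct and follows essentially the same route as the paper's: reduce modulo $\pp\Op$ and use the integrality of the reduced trace and norm (equivalently, the formula $\alpha^{-1}=(\tr(\alpha)-\alpha)N(\alpha)^{-1}$) to transfer invertibility between $\Op$, $\Op'$ and their reductions. The only difference is one of explicitness: where the paper asserts tersely that $z+\pip w y^{-1}\in(\Op')^\times$ in the injectivity step, you isolate and prove the underlying descent — $w\in\Op'$ and then $w^{-1}=(\tr(w)-w)N(w)^{-1}\in\Op'$ — which is a cleaner justification of the same point.
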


\begin{proof}
Consider the ring morphism $\phi_1:\Op\to\pp\Op\backslash\Op$. We claim that
the induced group homomorphism
$\phi_1:\Op^\times\to(\pp\Op\backslash\Op)^\times$ is surjective. Indeed, let
$[x]\in (\pp\Op\backslash\Op)^\times$. Then there exist $y,z\in\Op$ such that
$xy=1+\pip z$. Then $N(xy)\equiv 1 \mod(\pip)$, and hence $x\in\Op^\times$ as
claimed.

Compose $\phi_1$ with the map $p$ that projects
$(\pp\Op\backslash\Op)^\times$ onto the quotient set
$(\pp\Op \backslash \Op')^\times \backslash (\pp\Op
\backslash \Op)^\times$. Then $p\circ\phi_1$ is surjective, and
passes to the quotient set $(\Op')^\times \backslash \Op^\times$
to give a surjective map $\phi: (\Op')^\times \backslash \Op^\times \to
(\pp\Op \backslash \Op')^\times \backslash (\pp\Op
\backslash \Op)^\times$.

We claim that $\phi$ is injective. Indeed, let $x,y\in\Op^\times$ be
such that $\phi(x)=\phi(y)$. Then, since
$(\Op')^\times\to(\pp\Op\backslash\Op')^\times$ is also an epimorphism,
we have $z\in(\Op')^\times$ and $w\in\Op$ such that $x=zy+\pip
w$. Hence, $x=(z+\pip wy^{-1})y$, which shows that $[x]=[y]\in
(\Op')^\times \backslash \Op^\times$, since $\pip wy^{-1} \in \id{p} \Op
\subseteq\Op'$ and hence $z+\pip wy^{-1} \in (\Op')^\times$.
\end{proof}

By Proposition \ref{prop:e=1_o_2}, this result shows that, to give a system of
representatives for the sets $(\Op')^\times \backslash \Op^\times$ when $\Op'$
is a maximal suborder of $\Op$, it will be enough to do the calculations modulo
$\pp$.

Given a quasi-good basis $\B=\{1,e_1,e_2,e_3\}$ of $\Op$, and assuming that
$\Op'$ is obtained from $\Op$ by means of Algorithm \ref{algo:suborder}, we
proceed to give a system of representatives for the sets $(\Op')^\times
\backslash \Op^\times$, in terms of the form $g$ corresponding with $\Op'$. The
indexes $[\Op^\times :
(\Op')^\times]$ are known (see \cite{brz-aut}, Theorems 3.3 and 3.10), so it
will suffice to give in each case the right amount of non-equivalent units.

Let $q$ denote the order of the residue field $\kp$, and let
$\{a_1,a_2,\dots,a_q\}\subseteq\OO_\pp$ be a set of representatives for $\kp$
such that $a_1=1,a_2=-1$ and $a_q=0$. Let $\delta, \beta_0,\beta_1$ be as in
Proposition
\ref{prop:suborders}. Finally, let $S=\{
\tilde\gamma\in\kp\times\kp:1-\delta\tilde\gamma_1^2+\tilde\gamma_2^2\neq0\}$,
and each $\tilde\gamma\in S$ take $\gamma\in\OO_\pp\times\OO_\pp$ any lift of
$\tilde\gamma$.

\begin{prop}\label{prop:repr_unidades}

With the previous notation, Table \ref{table:repr_unidades} gives a system of
representatives for $(\Op')^\times\backslash\Op^\times$.

\medskip

\begin{table}[h]
\scalebox{0.76}{
 \begin{tabular}{|c|c|c|c|c|}
\hline
$\Op$-class & $\Op'$-class & $[\Op^\times:(\Op')^\times]$ & Representatives &
Condition\\
\hline\hline
\multirow{4}{*}{A1}  & \multirow{2}{*}{A1} & $q+1$ & $e_1, 1+\frac{a_i}{2}
(e_1 - e_2)\quad (1\leq i
\leq q)$ &
$d(\Op)=1$\\
\cline{3-5}
 &  & $q$ & $1 + \frac{a_i}{2} (e_1 - e_2)\quad (1\leq i \leq q)$ &
$d(\Op)\neq 1$\\
\cline{2-5}
 & A2 & $q(q-1)$ & $e_2, 1+\gamma_1 (\beta_1 e_3 - \beta_0 e_1) +
\gamma_2 e_2 \quad (\tilde\gamma\in S)$ &\\
\cline{2-5}
 & B & $q-1$ & $1,a_i+ e_3  \quad (3\leq i\leq q)$ & \\
\hline
\multirow{2}{*}{A2} & A2 & $q^2$ & $1+a_i e_1 + a_j e_2 \quad (1\leq
i,j\leq q)$&
\\
\cline{2-5}
 & B & $q+1$ & $1,a_i+ e_3 \quad (1\leq i\leq q)$& \\
\hline
\multirow{2}{*}{B} & \multirow{2}{*}{C} &\multirow{2}{*}{$q$} & $1,a_i+ e_2
\quad (1\leq i\leq q-1)$& $g\neq\<
1 , \delta\pip , \delta
  \pip^2 >$ \\
\cline{4-5}
 &  & &$1,a_i+ e_3 \quad (1\leq i\leq q-1)$& $g=\< 1
, \delta\pip ,
\delta \pip^2 >$\\
\hline
C & C & $q$ & $1,a_i+ e_2 \quad (1\leq i\leq q-1)$& \\
\hline
\end{tabular}}
\caption{Representatives for
$(\Op')^\times\backslash\Op^\times$}\label{table:repr_unidades}
\end{table}

\end{prop}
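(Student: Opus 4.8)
The plan is to push the whole problem down to the finite ring $A=\Op/\pp\Op$, where both the multiplication and the norm become explicit, and then to read off Table~\ref{table:repr_unidades} row by row. First I would invoke Proposition~\ref{prop:unid_mod_p}: since $\Op'$ is a maximal suborder we have $\pp\Op\subseteq\Op'$ by Proposition~\ref{prop:e=1_o_2}, so that proposition identifies $(\Op')^\times\backslash\Op^\times$ with $A'^\times\backslash A^\times$, where $A'=\Op'/\pp\Op$ is a $\kp$-subalgebra of $A$. The useful remark is that $A'^\times=A^\times\cap A'$: an element of $A'$ which is a unit of $A$ is not a zero divisor in $A$, hence not a zero divisor in $A'$, hence a unit of the finite ring $A'$. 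Therefore two units $u,v$ represent the same coset precisely when $uv^{-1}\in A'$, and the entire statement becomes a finite computation in $A$.

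Next I would make the data explicit. Because $\B=\{1,e_1,e_2,e_3\}$ is quasi-good, its image in $A$ satisfies the multiplication rules \eqref{eqn:tabla_mult} and the reduced norm \eqref{eqn:norma} attached to $f=\<1,a,b>$, read modulo $\pp$; and $A'$ is generated over $\kp$ by $1$ and the reductions of the elements $d_1,d_2,d_3$ of Table~\ref{table:suborders}, several of which carry a factor $\pip$ and hence vanish in $A$. Writing $q=\#\kp$, the verification of each row then consists of three checks: (i) every listed element reduces to a unit of $A$, tested by evaluating its reduced norm via \eqref{eqn:norma} and seeing that it is nonzero modulo $\pp$; (ii) distinct listed elements $u,v$ satisfy $uv^{-1}\notin A'$, so they lie in different cosets; and (iii) the number of listed elements equals $[\Op^\times:(\Op')^\times]$, which is known by \cite{brz-aut}, Theorems~3.3 and~3.10. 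Granting (i)--(iii), the listed elements form a complete and irredundant set of representatives. Check~(ii) is in each case a finite linear computation in $A$ using \eqref{eqn:tabla_mult}, and it is easiest in the rows where $A$ contains square-zero elements, which annihilate most of the relevant products; it is check~(i) that separates the two A1$\to$A1 subrows, since $e_1$ reduces to a unit exactly when $d(\Op)=1$, which raises the index from $q$ to $q+1$.

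The hard part will be the case A1$\to$A2, the only one where $A=M_2(\kp)$ is semisimple. Here the $\pip$-divisible generators $d_1,d_2$ vanish and $A'=\kp[\bar d_3]$ with $\bar d_3^{\,2}=\beta_0^2+\beta_1^2=\delta$, so $A'\cong\F_{q^2}$ sits inside $M_2(\kp)$ as a nonsplit torus and the coset space has $q(q-1)$ elements. For step~(i) the key identity is that the reduced norm of $1+\gamma_1(\beta_1 e_3-\beta_0 e_1)+\gamma_2 e_2$ equals, via \eqref{eqn:norma} and $\beta_0^2+\beta_1^2=\delta$, exactly $1-\delta\gamma_1^2+\gamma_2^2$; thus the inequality defining $S$ is precisely the unit condition. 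For the count in step~(iii) I would use that $\delta$ is a nonsquare, so the anisotropic binary form $\delta\gamma_1^2-\gamma_2^2$ represents $1$ exactly $q+1$ times over $\kp$; hence $\#S=q^2-q-1$ and, together with the representative $e_2$, the family has $q(q-1)$ elements. Distinctness here is the genuinely delicate point, since $A'$ is a field with no nilpotents to exploit: one must check directly that $uv^{-1}$ leaves the embedded torus $\kp[\bar d_3]$ whenever the parameters differ, which is where the explicit relations among $e_1,e_2,e_3$ and the constants $\beta_0,\beta_1$ come in. The remaining rows (the A2 families, B$\to$C and C$\to$C) follow the easy pattern described above.
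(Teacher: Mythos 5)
Your proposal follows essentially the same route as the paper's proof: reduce to the quotient $\pp\Op\backslash\Op$ via Proposition \ref{prop:unid_mod_p}, use the induced norm form to recognize units, check pairwise non-equivalence by a direct computation with the multiplication table \eqref{eqn:tabla_mult}, and conclude completeness by matching the count against the indices known from \cite{brz-aut}. The only difference is cosmetic — you work out the A1$\to$A2 row in detail where the paper details A1$\to$B — and your computations there (e.g.\ $N=1-\delta\gamma_1^2+\gamma_2^2$ and $\#S=q^2-q-1$) are correct.
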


\begin{proof}

According to the Proposition \ref{prop:unid_mod_p} we may assume that $\B$ is a
good basis, and it suffices to calculate a
system of representatives for the set $(\pp\Op\backslash \Op')^\times
\backslash (\pp\Op \backslash \Op)^\times$.

First notice that $\pp\Op \backslash \Op$ is a $\kp$-algebra that inherits
naturally from $B_\pp$ a norm form $N:\pp\Op \backslash
\Op\to\kp$ such that $(\pp\Op \backslash \Op)^\times=\{x\in
\pp\Op \backslash \Op:N(x)\neq0\}$. This allows us to easily
check that all the given representatives are indeed units, and also to
give the needed description of $(\pp\Op \backslash \Op')^\times$.

We will do the details in a single case, namely when $\Op$ has class A1
and $\Op'$ has class B. The rest of the cases can be treated similarly.

Let $x=x_0+x_1 e_1+x_2 e_2+x_3 e_3 \in\pp\Op \backslash
\Op$. In these coordinates we have that the norm form is given by
$N(x)=x_0^2-x_3^2$ (see (\ref{eqn:norma})), and that $x\in\pp\Op \backslash
\Op'$ if and only if $x_3=0$.

Hence, the elements of the form $a_i+e_3$ belong
to $(\pp\Op \backslash \Op)^\times$, if $i\geq 3$. They are not equivalent
modulo $(\pp\Op \backslash \Op')^\times$, since if
\begin{multline*}
    (a_i+e_3)(x_0+x_1 e_1+x_2 e_2) = \\ =
  a_i x_0+(a_i x_1+x_2)
  e_1+(a_i x_2+x_1) e_2+x_0 e_3 =a_j +e_3 ,
\end{multline*}
then $x_0=1$ and hence $i=j$. And they are not equivalent to
$1$, since they do not belong to $\pp\Op \backslash \Op'$.
\end{proof}

\subsection*{Global method: The colon lattice} 

Let $ I \in I(\O)$. We introduce an alternative method to calculate $\Psi(I)$,
using global tools. Consider the lattice
\[
 \Lambda_ I =\{y\in B: y  I ^{-1}\subseteq \O'\}.
\]
It satisfies that $\Lambda_ I =\Lambda_\O  I$. For simplicity, we will just
consider $\Lambda =\Lambda_\O$. It is clear that $\Lambda\subseteq \O'$ and
$\O\subseteq R_r(\Lambda)$. 

\begin{lemma}\label{lemma:lambdaindex}
The lattice $\Lambda$ satisfies the following properties:

\begin{enumerate}
\item $\pp \O\subseteq \Lambda$, and hence $[\O:\Lambda]\mid \pp^4$.
\item $\Lambda\subseteq J$ for all $J\in\Psi(\O)$.
\end{enumerate}
\end{lemma}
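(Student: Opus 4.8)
The plan is to work directly from the definition $\Lambda=\Lambda_\O=\{y\in B:y\O\subseteq\O'\}$ (recall $\O^{-1}=\O$, so $\Lambda$ is exactly the set of elements carrying $\O$ into $\O'$; it is a left $\O'$-, right $\O$-module contained in $\O'$), and to prove both parts essentially by unwinding containments, passing to completions only where products of ideals are involved. Throughout I use that $\O'$ is a maximal suborder, so that Proposition~\ref{prop:e=1_o_2} provides a prime $\pp$ with $[\O:\O']=\pp^e$ ($e\in\{1,2\}$) and the nontrivial containment $\pp\O\subseteq\O'$.

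For part (a) I would note that $\pp\O\subseteq\O'$ immediately gives $\pp\O\subseteq\Lambda$: if $y\in\pp\O$ then $y\O\subseteq\pp\O\subseteq\O'$, so $y\in\Lambda$. Since also $\Lambda\subseteq\O'\subseteq\O$, the chain $\pp\O\subseteq\Lambda\subseteq\O$ and multiplicativity of the index ideal give $[\O:\Lambda]\mid[\O:\pp\O]$. The right-hand index is computed via the scalar endomorphism $\pip\cdot\mathrm{id}$, whose determinant is $\pip^4$, so $[\O:\pp\O]=\pp^4$ and therefore $[\O:\Lambda]\mid\pp^4$.

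For part (b), which I view as the main point, fix $J\in\Psi(\O)$, i.e. a left $\O'$-ideal with $\O J=\O$, and show $\Lambda_\qq\subseteq J_\qq$ at every prime $\qq$. At $\qq\neq\pp$ we have $\Oq=\O'_\qq$, whence $\Lambda_\qq=\Oq$; moreover $J_\qq$ is a left $\Oq$-module with $\Oq J_\qq=J_\qq=\Oq$, so both localizations equal $\Oq$. At $\pp$, write $J_\pp=\O'_\pp z$ with $z\in B_\pp^\times$ (local ideals are principal); then $\O J=\O$ localizes to $\Op z=\Op J_\pp=\Op$, forcing $z\in\Op^\times$. Since $z^{-1}\in\Op$, for any $y\in\Lambda_\pp$ we get $yz^{-1}\in y\Op\subseteq\O'_\pp$, i.e. $\Lambda_\pp\subseteq\O'_\pp z=J_\pp$. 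Collecting the two cases yields $\Lambda\subseteq J$.

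Alternatively, one can give a localization-free argument for (b): from $\O J=\O$ and $JJ^{-1}=\Ol(J)=\O'$ one deduces $\O J^{-1}=\O\O'=\O$, hence $J^{-1}\subseteq\O J^{-1}=\O$; then the defining property of $\Lambda$ gives $\Lambda J^{-1}\subseteq\O'$, and multiplying on the right by $J$ yields $\Lambda\subseteq\Lambda\,\Or(J)=\Lambda J^{-1}J\subseteq\O'J=J$. In either approach the only genuinely delicate step is extracting the correct consequence of $\O J=\O$—that $J$ is a \emph{unit} ideal relative to $\O$, so that locally its generator lies in $\Op^\times$ (equivalently $J^{-1}\subseteq\O$); everything else is bookkeeping with the one-sided module structure of $\Lambda$.
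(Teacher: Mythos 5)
Your proof is correct and follows essentially the same route as the paper's: part (a) comes straight from $\pp\O\subseteq\O'$, and part (b) is proved by localizing, noting the statement is trivial at $\qq\neq\pp$, writing $J_\pp=\Op' u_\pp$ with $u_\pp\in\Op^\times$, and pushing an element of $\Lambda_\pp$ through $u_\pp^{-1}$ — exactly the paper's chain of implications. Your alternative localization-free argument for (b), deducing $J^{-1}\subseteq\O$ from $\O J=\O$ and then $\Lambda\subseteq\Lambda J^{-1}J\subseteq\O'J=J$, is also valid and is a small bonus not present in the paper.
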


\begin{proof}

The inclusion in \textit{(a)} follows from the fact that $\pp \O'\subseteq \O$.
The inclusion in \textit{(b)} is clear if we consider the completion at primes
$\qq\neq \pp$, so we will look only at the completion at $\pp$. Let  
$J\in\Psi(\O)$, and write $J_\pp=
  \Op' u_\pp$ with $u_\pp\in \Op^\times$. Then,
\[
 \alpha_\pp\in\Lambda_\pp\Rightarrow\alpha_\pp\Op\subseteq
 \Op'\Rightarrow \alpha_\pp u_\pp^{-1}\in \Op' \Rightarrow
 \alpha_\pp\in \Op' u_\pp = J_\pp.
\]

\end{proof}

Since $\pp \Op\subseteq \Op'$, we can consider $\Op/\Op'$ as a
$\kp$-vector space. When $e=2$, we can go further. Since in that case
$\Op'$ has class A2, the ring $\OO_\pp + \sqrt{\delta} \OO_\pp$ embeds into
$\Op'$, and hence into $\Op$. Then we can consider $\Op/\Op'$ as a
$\Kp$-vector space, where $\Kp$ is the quadratic extension of $\kp$
given by $\Kp=(\OO_\pp + \sqrt{\delta}\OO_\pp) / \pp(\OO_\pp +
\sqrt{\delta}\OO_\pp)$.

\begin{lemma}\
 \begin{enumerate}
\item If $e=1$, then $\dim_{\kp}(\Op/ \Op')=1$.
\item If $e=2$, then $\dim_{\Kp}(\Op/ \Op')=1$.
 \end{enumerate}
\end{lemma}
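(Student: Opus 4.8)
The plan is to compute the dimension of the quotient $\Op/\Op'$ directly from the index $[\Op:\Op']=\pp^e$, distinguishing the two cases according to the structure we have available. Since $\pp\Op\subseteq\Op'$ (by Proposition \ref{prop:e=1_o_2}, as $\Op'$ is a maximal suborder of $\Op$), the quotient $\Op/\Op'$ is a module over $\Op/\pp\Op$, and in particular it is naturally a $\kp$-vector space; the whole point is to identify its dimension over the appropriate field.

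For case \textit{(a)}, where $e=1$, I would argue as follows. The index $[\Op:\Op']=\pp$ is, by definition, the $\OO_\pp$-ideal generated by the determinants of the endomorphisms carrying $\Op$ into $\Op'$. Choosing an $\OO_\pp$-basis adapted to the inclusion $\Op'\subseteq\Op$ (via the elementary divisor / Smith normal form theorem for the $\OO_\pp$-lattice $\Op'$ inside $\Op$), we may write $\Op=\bigoplus_{i=0}^3 \OO_\pp v_i$ and $\Op'=\bigoplus_{i=0}^3 \pip^{n_i}\OO_\pp v_i$ with $\sum_i n_i = v_\pp([\Op:\Op'])=e$. Then
\[
 \Op/\Op'\cong\bigoplus_{i=0}^3 \OO_\pp/\pip^{n_i}\OO_\pp,
\]
so its length as an $\OO_\pp$-module equals $\sum_i n_i = e$. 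When $e=1$ exactly one $n_i$ equals $1$ and the rest vanish, so $\Op/\Op'\cong\kp$, giving $\dim_{\kp}(\Op/\Op')=1$.

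For case \textit{(b)}, where $e=2$, the same elementary divisor computation shows that $\Op/\Op'$ has length $2$ as an $\OO_\pp$-module, hence $\dim_{\kp}(\Op/\Op')=2$. The extra input is the $\Kp$-structure described just before the lemma: since $\Op'$ has class A2, the ring $\OO_\pp+\sqrt{\delta}\,\OO_\pp$ embeds into $\Op'$ and therefore into $\Op$, and $\pp\Op\subseteq\Op'$ means the action of this ring on $\Op/\Op'$ factors through $\Kp=(\OO_\pp+\sqrt{\delta}\,\OO_\pp)/\pp(\OO_\pp+\sqrt{\delta}\,\OO_\pp)$, which is the quadratic extension of $\kp$ of degree $2$. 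Thus $\Op/\Op'$ is a $\Kp$-vector space, and comparing $\kp$-dimensions, $2=\dim_{\kp}(\Op/\Op')=[\Kp:\kp]\cdot\dim_{\Kp}(\Op/\Op')=2\dim_{\Kp}(\Op/\Op')$, forcing $\dim_{\Kp}(\Op/\Op')=1$.

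The main obstacle, and the only nonformal step, is verifying that in case \textit{(b)} the quotient $\Op/\Op'$ is genuinely a $\Kp$-module and not merely a two-dimensional $\kp$-space on which $\Kp$ fails to act freely — i.e.\ that the embedded quadratic ring really acts through scalar multiplication compatibly with the module structure. This amounts to checking that the image of $\sqrt{\delta}$ acts on $\Op/\Op'$ without fixed nonzero vectors except as dictated by the field structure, so that the action is free and the $\Kp$-dimension is well defined; once this is granted the dimension count is forced by degree multiplicativity. I would establish this by using the good-basis description of the class A2 order from Table \ref{table:qforms}, where the element playing the role of $\sqrt{\delta}$ appears explicitly among the generators, and checking directly that $\Op/\Op'$ is free of rank one over $\Kp$.
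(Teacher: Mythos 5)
Your argument is correct and is essentially the paper's: the authors simply observe that $[\Op:\Op']=\pp^e$ gives $\vert\Op/\Op'\vert=q^e$, which forces $\kp$-dimension $1$ when $e=1$ and $\Kp$-dimension $1$ when $e=2$ --- exactly your count via elementary divisors plus degree multiplicativity. The ``obstacle'' you flag at the end is not one: since $\Kp$ is a field (because $\delta$ is a non-residue mod $\pp$) and the left-multiplication action of $\OO_\pp+\sqrt{\delta}\,\OO_\pp$ on $\Op/\Op'$ annihilates $\pp$ (as $\pp\Op\subseteq\Op'$), the quotient is automatically a $\Kp$-vector space and hence free, so no good-basis verification is needed.
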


\begin{proof}

It follows immediately from the fact that $\vert \Op/\Op' \vert=q^e$.

\end{proof}

\begin{prop}\label{pp^2e}
$[\O':\Lambda]=\pp^e$, and hence $[\O:\Lambda]=\pp^{2e}$. In
  particular, if $e=2$ then $\Lambda=\pp R$.
\end{prop}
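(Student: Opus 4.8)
The plan is to read both indices off at $\pp$ and then turn each into a dimension count over the residue field. Since $\O_\qq=\O'_\qq$ for every $\qq\neq\pp$, one checks that $\Lambda_\qq=\O_\qq=\O'_\qq$ there, so $[\O':\Lambda]=[\Op':\Lambda_\pp]$ and $[\O:\Lambda]=[\Op:\Lambda_\pp]$. By Lemma \ref{lemma:lambdaindex} we already have $\pp\Op\subseteq\Lambda_\pp\subseteq\Op'\subseteq\Op$, so every lattice in sight contains $\pp\Op$ and I work inside the $\kp$-algebra $A:=\Op/\pp\Op$, with $\dim_{\kp}A=4$. Writing $\overline{\Op'}=\Op'/\pp\Op$ and reducing the defining condition $y\Op\subseteq\Op'$ modulo the two-sided ideal $\pp\Op$, the image $\overline\Lambda:=\Lambda_\pp/\pp\Op$ is exactly
\[
 \overline\Lambda=\{z\in A:zA\subseteq\overline{\Op'}\}.
\]
As these are all $\kp$-spaces squeezed between $\pp\Op$ and $\Op$, one has $[\Op':\Lambda_\pp]=\pp^{\,\dim_{\kp}\overline{\Op'}-\dim_{\kp}\overline\Lambda}$, so the statement reduces to $\dim_{\kp}\overline{\Op'}-\dim_{\kp}\overline\Lambda=e$.

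I would treat $e=2$ first, as this is where the field structure set up above pays off. Here $\Op'$ has class A2, so by the previous lemma $\dim_{\Kp}(\Op/\Op')=1$; since $\dim_{\Kp}A=2$ this forces $\overline{\Op'}=\Kp$, the quadratic extension of $\kp$ embedded via $\OO_\pp+\sqrt{\delta}\,\OO_\pp$. If some nonzero $z\in\overline\Lambda$ existed, then $z\in\overline{\Op'}=\Kp$ would be invertible in the field $\Kp$, and $zA\subseteq\Kp$ would give $A=z^{-1}(zA)\subseteq\Kp$, contradicting $\dim_{\Kp}A=2$. Hence $\overline\Lambda=0$, i.e. $\Lambda_\pp=\pp\Op$; combined with the trivial local conditions at $\qq\neq\pp$ this yields $\Lambda=\pp R$, and $[\Op':\Lambda_\pp]=[\Op':\pp\Op]=\pp^{4-2}=\pp^2=\pp^e$.

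For $e=1$ the previous lemma gives that $\overline{\Op'}$ is a hyperplane of $A$, say $\overline{\Op'}=\ker\ell$ for a linear form $\ell$ with $\ell(1)=0$. I would identify $\overline\Lambda$ with the left radical of the bilinear form $\Phi(z,a)=\ell(za)$ on $A$, since $zA\subseteq\ker\ell$ means precisely $\Phi(z,-)\equiv0$; thus $\dim_{\kp}\overline\Lambda=4-\operatorname{rank}\Phi$, and the claim $[\Op':\Lambda_\pp]=\pp$ becomes $\operatorname{rank}\Phi=2$. This rank computation is the main obstacle, because $A$ changes shape from class to class — it is $M_2(\kp)$ when $\Op$ is maximal of matrix type, but acquires a nontrivial radical for the Eichler and division-type orders — so a single uniform choice of $\ell$ is delicate. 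I would settle it by running through the $e=1$ rows of Table \ref{table:suborders}: each gives an explicit good basis $\{1,d_1,d_2,d_3\}$ of $\Op'$ whose reductions span $\overline{\Op'}$, and a short computation with the rules \eqref{eqn:tabla_mult} (equivalently, with the norm form \eqref{eqn:norma} reduced modulo $\pp$) exhibits $\Phi$ of rank exactly $2$. The representative case $\Op=M_2(\OO_\pp)\supseteq\Op'=E_1$, where $\overline{\Op'}$ is the upper-triangular subalgebra and $\overline\Lambda=\{(\begin{smallmatrix}*&*\\0&0\end{smallmatrix})\}$ is two-dimensional, already displays the mechanism. Putting the two cases together gives $[\O':\Lambda]=\pp^e$, whence $[\O:\Lambda]=[\O:\O']\,[\O':\Lambda]=\pp^{2e}$.
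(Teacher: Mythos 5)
Your reduction to the residue algebra $A=\Op/\pp\Op$ is sound, and your treatment of the case $e=2$ is complete and correct: identifying $\overline{\Op'}$ with the field $\Kp$ and using invertibility of any nonzero $z\in\overline\Lambda$ to force $A\subseteq\Kp$ is a clean (and slightly different) route to $\Lambda_\pp=\pp\Op$. The problem is the case $e=1$. There your argument stops exactly where the content of the proposition begins: you translate $[\Op':\Lambda_\pp]=\pp$ into the claim $\operatorname{rank}\Phi=2$ for the form $\Phi(z,a)=\ell(za)$, observe yourself that ``this rank computation is the main obstacle,'' and then defer it to an unexecuted case-by-case check through the $e=1$ rows of Table \ref{table:suborders}, verifying only the single case $M_2(\OO_\pp)\supseteq E_1$. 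Since the rank could a priori be $1$, $2$ or $3$ depending on how $\overline{\Op'}$ sits inside the various residue algebras $A$ (which, as you note, range from $M_2(\kp)$ to algebras with nontrivial radical), one worked example does not establish the claim, and the case analysis you would need is precisely the uniformity the proposition is asserting. As written, the $e=1$ half is a restatement, not a proof.

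The paper avoids this entirely with one uniform observation that you are missing: consider $\psi:\Op'\to\End(\Op/\Op')$, $\alpha\mapsto(v\mapsto\alpha v)$. Its kernel is exactly $\Lambda_\pp$ (this uses $\Lambda\subseteq\O'$), so $\Op'/\Lambda_\pp$ injects into $\End(\Op/\Op')$, compatibly with the $\kp$- (resp.\ $\Kp$-) structure. By the preceding lemma $\Op/\Op'$ is \emph{one-dimensional} over $\kp$ when $e=1$ (resp.\ over $\Kp$ when $e=2$), so the target is one-dimensional; since $1\notin\Lambda_\pp$ the map is nonzero, hence $\Op'/\Lambda_\pp$ is one-dimensional and $[\Op':\Lambda_\pp]=\pp^e$ in both cases at once, with no reference to Table \ref{table:suborders}. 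In your language, this says the left radical of your form $\Phi$ automatically has codimension $\dim_{\kp}A-\dim_{\kp}\overline{\Op'}=e$ inside $\overline{\Op'}$ because left multiplication lands in the one-dimensional space $\End_{\kp}(\Op/\Op')$; if you want to keep your framework, that is the lemma you should prove instead of computing ranks case by case.
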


\begin{proof}
It is enough to consider the completion at $\pp$. Then, we need to show that
$\vert \Op' / \Lambda_\pp\vert=q^e$. Consider the morphism (of additive groups)
\begin{align*}
 \psi:\Op' &\rightarrow \End(\Op/\Op') \\
\alpha &\mapsto (v \mapsto \alpha \cdot v).
\end{align*}
Its kernel is $\Lambda_\pp$. The induced morphism $\psi:\Op'/\Lambda_\pp\to
\End(\Op/\Op')$ is easily seen to be also a $\kp$-vector space (respectively
$\Kp$-vector space) morphism when $e=1$ (respectively $e=2$). Note that since
$1\not \in \Lambda_\pp$, it is not the null morphism. Hence, the result follows
from the previous lemma.

\end{proof}

\begin{coro}\label{coro:ida=...} The set $\Psi(I)$ is given by
\[
 \Psi(I) = \{J:
\O J= I ,\,\Ol(J)=\O',\,\Lambda_{ I }\subseteq J \subseteq
 I ,\,[ I :J]=[J:\Lambda_{ I }]=\pp^e\}.
\]
\end{coro}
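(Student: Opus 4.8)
The plan is to prove the two set inclusions, reducing each to a local statement at $\pp$. This reduction is legitimate because the conditions $\Lambda_I\subseteq J\subseteq I$ together with $[I:J]=\pp^e$ force $J_\qq=I_\qq$ for every $\qq\neq\pp$. The two sets being compared differ only in their defining requirements: membership in $\Psi(I)$ asks that $J$ be an invertible left $\O'$-ideal with $\O J=I$, whereas the right-hand side replaces invertibility by the sandwich $\Lambda_I\subseteq J\subseteq I$ and the index conditions. So the content of the two inclusions is, respectively, that an element of $\Psi(I)$ automatically satisfies the sandwich and index conditions, and that a lattice satisfying those conditions is automatically invertible.

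For the inclusion of $\Psi(I)$ into the right-hand side, let $J\in\Psi(I)$. Then $\O J=I$ and $\Ol(J)=\O'$ hold by definition, and $J=1\cdot J\subseteq\O J=I$ since $1\in\O$. The inclusion $\Lambda_I\subseteq J$ is the $I$-twisted form of Lemma \ref{lemma:lambdaindex}(b), proved by the same localization argument: writing $I_\pp=\Op x_\pp$ and $J_\pp=\Op'(\alpha_\pp x_\pp)$ with $\alpha_\pp\in\Op^\times$ (Proposition \ref{prop:accion_local_unid_a_der}), any $y\in(\Lambda_I)_\pp$ satisfies $y\,x_\pp^{-1}\Op\subseteq\Op'$ and hence $y\in\Op'\alpha_\pp x_\pp=J_\pp$. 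For the indices I would localize at $\pp$: right multiplication by the unit $\alpha_\pp$ and by $x_\pp$ preserves indices, so $[I:J]=[\Op:\Op']=\pp^e$; and since $\Lambda_I=\Lambda\,I$ with $I=\O I$, the same preservation gives $[I:\Lambda_I]=[\O:\Lambda]=\pp^{2e}$ by Proposition \ref{pp^2e}. Multiplicativity of the index then yields $[J:\Lambda_I]=\pp^{2e}/\pp^{e}=\pp^e$.

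For the reverse inclusion, let $J$ satisfy the right-hand side conditions; I must only check that $J$ is invertible, the requirements $\O J=I$ and $\Ol(J)=\O'$ being given. Invertibility is local. For $\qq\neq\pp$ we have $J_\qq=I_\qq$, which is principal because $I$ is an invertible $\O$-ideal and $\Oq=\Oq'$. At $\pp$, localizing $\Ol(J)=\O'$ gives $\Ol(J_\pp)=\Op'$; since $\O'$ is Bass, $\Op'$ is Gorenstein, so every lattice with left order $\Op'$ is invertible, whence $J_\pp$ is invertible and, being a lattice over the local ring $\Op'$, principal. Thus $J\in I(\O')$ with $\O J=I$, i.e.\ $J\in\Psi(I)$.

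The main obstacle is precisely this last step: one must rule out that the sandwich-and-index description admits non-invertible lattices $J$. This is resolved by the Gorenstein property of $\Op'$, which is where the Bass hypothesis on $\O'$ enters. The remaining work is routine index bookkeeping, the only subtlety being to invoke invariance of the index under right multiplication by invertible lattices (equivalently, by local generators) when passing between $\O$, $\Lambda$ and their translates by $I$.
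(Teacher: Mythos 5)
Your proof is correct and follows essentially the same route as the paper, which disposes of the case $I=\O$ by citing Lemma \ref{lemma:lambdaindex} and Proposition \ref{pp^2e} and declares the general case analogous; you have simply written out the localization and index bookkeeping that the paper leaves implicit. Your one genuine addition is to make explicit that the reverse inclusion hinges on $\O'$ being Gorenstein (so that $\Ol(J)=\O'$ alone forces invertibility), a point the paper's ``follows immediately'' glosses over but which is exactly the right justification within its framework.
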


\begin{proof}

When $ I =\O$, the result follows immediately from Lemma \ref{lemma:lambdaindex}
and Proposition \ref{pp^2e}. The arguments used for the general case are
entirely ana\-logous.

\end{proof}

In particular, to calculate $\Psi(I)$ (whose cardinality we already know
by Proposition~\ref{prop:psi_local_global}), we can limit ourselves to
calculate the lattices between $\Lambda_{ I }$ and $ I $ with
the indicated indexes, and then determine which of them satisfy the first two
equalities. Furthermore, the equality $\Ol(J)=\O'$ can be replaced by
the equality $N(J) = N(I)$, which sometimes is easier to verify.

\begin{remark}
If $e=1$, then $[ I :\Lambda_{ I }]=\pp^2$, and there are $q+1$ lattices between
these two. We have seen that the number of elements of $\Psi(I)$ is $q-1,\, q$
or $q+1$. Hence, almost all lattices constructed are needed. This makes the
method effective. 
\end{remark}

\begin{remark}
In the case $e=2$, we know that the elements in $\Psi(I)$ have a $(\OO_\pp +
\sqrt{\delta} \OO_\pp)$-module structure. If we only consider lattices between
$\Lambda_{ I }$ and $ I $ which have this extra structure, there are $q^2+1$
such lattices. The order of $\Psi(I)$ is $q^2-q$ if $\O$ is the maximal order
and $\O'$ is of class A2, and $q^2$ if both orders are of class A2. Hence,
except for the maximal order, this construction is effective as well.
\end{remark}

\subsection*{The algorithm}

We now prove our second main result, which we first recall.

\begin{thmb}
\label{thm:representatives}
There is an algorithm that, given a Bass order $\O$ in $B$ and a set of
representatives $S$ of left $\O$-ideal classes, computes left ideal classes
representatives for suborders of $\O$ of any given genus. Furthermore, the set
of norms of the computed ideals is the same as the set of norms of the ideals in
$S$. 

\end{thmb}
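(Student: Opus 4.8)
The plan is to reduce Theorem B to the case of a \emph{maximal} Bass suborder $\O'\subseteq\O$ and then iterate. By Theorem A (Theorem~\ref{thm:suborder}), any suborder of a given genus can be obtained as a chain of maximal suborders, each differing from its predecessor at a single prime $\pp$ with $[\O:\O']=\pp$ or $\pp^2$. Since the statement is transitive --- once we have representatives for $\cl(\O')$ we can feed them back as input to produce representatives for the next order in the chain --- it suffices to handle one maximal step. The output norms being preserved at each step will then give the final norm assertion by induction.

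For the single maximal step, the key tool is Proposition~\ref{prop:partic_clases_ideales}, which gives the disjoint decomposition $\cl(\O')=\coprod_{[I]\in\cl(\O)}[\Psi(I)]$. So, given the input set $S=\{I_1,\dots,I_h\}$ of representatives for $\cl(\O)$, the algorithm would, for each $I\in S$, compute the finite set $\Psi(I)$, then select a system of representatives for the classes $[\Psi(I)]$; the union over all $I\in S$ of these selected ideals is a system of representatives for $\cl(\O')$. The cardinality $\#\Psi(I)=[\Op^\times:(\Op')^\times]$ is known a priori by Proposition~\ref{prop:psi_local_global}, which tells us when to stop computing. To compute $\Psi(I)$ itself we have two methods developed in the section: the local method via Corollary~\ref{coro:accion_local_unid_a_der}, which applies the explicit unit representatives of Table~\ref{table:repr_unidades} (Proposition~\ref{prop:repr_unidades}) to a local generator $x_\pp$ of $I_\pp$ (constructed as in Remark~\ref{rmk:localgenerator}), gluing back to a global ideal via Proposition~\ref{prop:latticeconstruction}; or the global colon-lattice method of Corollary~\ref{coro:ida=...}, enumerating the lattices $J$ with $\Lambda_I\subseteq J\subseteq I$ of the prescribed indices.

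The norm-preservation claim is then immediate: by the proposition characterizing $\Psi(I)$ as those $J\subseteq I$ with $N(J)=N(I)$, every ideal we produce has the same norm as some ideal of $S$, and conversely each input norm is realized, so the two sets of norms coincide. The remaining task is the equivalence testing needed to pass from $\Psi(I)$ to a set of representatives for $[\Psi(I)]$: here Proposition~\ref{prop:accion_unid_a_der} shows the fibers of $\Psi(I)\to[\Psi(I)]$ are orbits of $\Or(I)^\times$, with $\#\big([J]\cap\Psi(I)\big)=[\Or(I)^\times:\Or(J)^\times]$, so one can sieve out duplicate classes by computing these right-order unit indices rather than by repeatedly evaluating norm forms as in the Pizer-style methods.

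The main obstacle I expect is precisely this last equivalence-checking step across \emph{different} base ideals $I_1,I_2\in S$: the decomposition of Proposition~\ref{prop:partic_clases_ideales} guarantees that classes coming from distinct $[I]\in\cl(\O)$ are automatically distinct in $\cl(\O')$, so no cross-comparison is needed --- this is the structural gain that lets us invoke the norm form only once (as advertised in the introduction and Remark~\ref{rmk:iterando_ideales}). Thus the only genuine work is the \emph{within-fiber} identification of classes inside a single $\Psi(I)$, which is controlled by the transitive $\Or(I)^\times$-action. Assembling the algorithm is then a matter of combining Algorithm~\ref{algo:suborder} (to build $\O'$ locally and globalize), the explicit unit tables to enumerate $\Psi(I)$, and the orbit count of Proposition~\ref{prop:accion_unid_a_der} to reduce to representatives, iterating the whole procedure along the chain of maximal suborders to reach any prescribed genus.
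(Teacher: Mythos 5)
Your proposal is correct and follows essentially the same route as the paper: reduce to a single maximal suborder step, use the disjoint decomposition $\cl(\O')=\coprod_{[I]}[\Psi(I)]$, compute each $\Psi(I)$ by the local unit representatives (or the colon-lattice alternative), and sieve within each fiber by the transitive $\Or(I)^\times$-action, with norm preservation coming from the characterization of $\Psi(I)$ by $N(J)=N(I)$. The only point you leave implicit is how the finite unit set needed for the orbit computation is actually obtained --- the paper handles this via the exact sequence \eqref{eqn:s.e.c.} and LLL under the standing assumption that $B$ is totally definite --- but this does not change the structure of the argument.
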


\begin{proof} It suffices to give an algorithm that works when considering
maximal suborders of $\O$. In particular, we assume that we are given the same
input as in Algorithm \ref{algo:suborder}, plus the set $S$. The algorithm will
return a set $S'$ of representatives for left ideal classes representatives for
the suborder $\O'$ obtained by Algorithm \ref{algo:suborder}.
 
By Proposition \ref{prop:partic_clases_ideales}, it suffices to give an
algorithm which calculates, for each  $I\in S$, a set of representatives $S_I'$
for $[\Psi(I)]$, and then return $S'=\bigcup_{I\in S} S'_I$. The algorithm works
as follows.

\begin{algo}\label{algo:ideales}\
 
\medskip

\noindent\textit{Step 1}. Using Proposition \ref{prop:repr_unidades}, compute a
set of representatives for the set $(\Op')^\times \backslash \Op^\times$.

\medskip

\noindent\textit{Step 2}. Using Remark \ref{rmk:localgenerator}, 
find a local generator for $I_\pp$.

\medskip

\noindent\textit{Step 3}. Using Corollary \ref{coro:accion_local_unid_a_der} and
Proposition \ref{prop:latticeconstruction}, compute the set ${\Psi(I)}$.

\medskip

\noindent\textit{Step 4}. Set $T =\Psi(I)$ and set $S'_I=\emptyset$

\medskip

\noindent\textit{Step 4.1}. Pick $J\in T$ and compute $[J]\cap\Psi(I)$ by
letting $\O_r(J)^\times \backslash \Or(I)^\times$ act on $J$ (see Proposition
\ref{prop:accion_unid_a_der}).

\medskip

\noindent\textit{Step 4.2}. Set $S_I' = S_I' \cup \{J\}$. If $T \backslash
[J]=\emptyset$, return $S_I'$. Elseif, let $T=T \backslash [J]$, and go to Step
4.1.

\end{algo}
\end{proof}

\begin{remark}\label{rmk:local_vs_global}
We can replace Steps 1, 2 and 3 by the global method to compute $\Psi(I)$ given
in Corollary \ref{coro:ida=...}, although to our knowledge there is no advantage
of one over the other.
\end{remark}

We do not have a general method to, given $ J \in\Psi(I)$, compute a system of
representatives for the (finite) set $\Or(J)^\times \backslash \Or(I)^\times$
needed in Step 4.1. However, if $B$ is totally definite (i.e., if $K$ is totally
real and $B$ ramifies at every infinite place of $K$), then the set $\OO^\times
\backslash\Or(I)^\times $ is finite and can be used as well to compute
$[J]\cap\Psi(I)$. 

The finiteness of the set $\OO^\times \backslash\Or(I)^\times $, as well as a
method to compute it, can be obtained considering the exact sequence
\begin{equation}\label{eqn:s.e.c.}
  1 \longrightarrow\{\pm 1\}\backslash \Or(I)^{\times,1}
\longrightarrow \OO^\times \backslash \Or(I)^\times
\stackrel{N}{\longrightarrow}  (\OO^\times)^2\backslash \OO^\times_+,
\end{equation}
where $\OO^\times_+$ denotes the group of totally positive units of $\OO$.
Assuming $B$ totally definite, the quadratic form $\tr_{K\slash\Q}\circ
N:B\to\Q$ is positive definite, and hence the group $\Or(I)^{\times,1}$ is
finite and can be calculated using LLL. Furthermore, its possible group
structures are known (see \cite{vigneras-simpl}). The group
$(\OO^\times)^2\backslash \OO^\times_+$ is always finite, and equals the null
group in many cases, such as for fields $K$ having narrow class number equal to
$1$ (see \cite{edgar}).

\begin{remark}\label{rmk:iterando_ideales}
 
Since $\Or(J)^\times\subseteq \Or(I)^\times$ for every $J\in\Psi(I)$, when
iterating the algorithm we need to apply the previous procedure to compute the
sets $\OO^\times \backslash\Or(I)^\times $ only for the initial set of ideals.
\end{remark}

\section{Example: The Consani-Scholten quintic}\label{sect:ejemplo}

In this section we are going to show how we can use our method to compute ideal
classes representatives for an Eichler order of discriminant $30$ in the
quaternion algebra ramified at the two infinite places of the real quadratic
field $K=\QQ[\sqrt{5}]$.

This example was considered in \cite{consani} to conjecture the
modularity of a Galois representation attached to the third \'etale
cohomology of a quintic threefold (see \cite{consani}, Theorem 1 for
details). In that article the algebra considered is ramified also at
$2$ and $3$, since the Galois representation associated to the quintic
has semi-stable reduction at those places. The representatives are
constructed following the method of Pizer (see \cite{Pizer}), which
implies seeking for ideals and checking for equivalence between the
constructed ones until the class number (which has to be precomputed
or can be deduced during the computation using the Mass formula) is
reached. We consider instead the quaternion algebra ramified only at
the two infinite places, since in that case the maximal order has
class number equal to $1$, which makes calculations simpler. We make
use first of Theorem A to compute an Eichler order of discriminant $30$ and
then we make use of Theorem B to compute its left ideal class representatives.
Most of the computations were made with the aid of \texttt{SAGE} (\cite{sage}).

\medskip

Denote by $\omega = \frac{1+\sqrt{5}}{2}$ and let $\OO=\ZZ + \ZZ \omega$ be the
ring of integers of $K$. Let $B$ be the quaternion algebra $(-1,-1)_K$, i.e.,
the algebra over $K$ generated by $1,i,j,k$, where $i^2=j^2=-1,ij=k=-ji$. It is
clearly unramified at all finite places not dividing $2$, and it is ramified at
the two infinite places. Since $2$ is inert in the extension $K/\QQ$, $B$ does
not ramify at $2$ (by parity reasons).

\subsection{Constructing the orders}

Starting with a maximal order in $B$ as input, we compute an Eichler
order in $B$ of discriminant $30$. Considering the prime factorization of $30$ in
$\OO$, we iterate Algorithm \ref{algo:suborder} to construct a chain
\[
R(1)\supseteq R(2)\supseteq R(3)\supseteq R(6)\supseteq
R(6\sqrt{5})\supseteq R(30),
\]
where $R(\mathfrak{N})$ denotes an order of discriminant $\mathfrak{N}$.

\medskip

The maximal order we use is the order given in Chapter V of \cite{vig}, namely
\[
R(1) = \<\dfrac{1+\omega^{-1}i+\omega j}{2}, \dfrac{\omega^{-1} i + j + \omega
k}{2}, \dfrac{\omega i+ \omega^{-1} j +k}{2},\dfrac{ i + \omega j + \omega^{-1}
k}{2} >_\OO.
\]

\subsubsection{Discriminant 2}

In this first step we use Algorithm \ref{algo:suborder} referring
to the Appendix.

\medskip

\noindent\textit{Step 1}. The order $R(1)_2$ is in correspondence with the
form $f=H \perp \< 1 >$. Using the basis for $R(1)$ given above, we get that
\[
 \B=\big\{1,\tfrac{1}{2}(1+\omega^{-1}i+\omega j),\tfrac{1}{2}(\omega
i+\omega^{-1}j+k),\tfrac{1}{2}(i + \omega j + \omega^{-1} k)\big\}
\]
is a basis for $R(1)_2$. Its dual basis is
\begin{align*}
  \B^\vee=\big\{&f_0,\omega i -(1+\omega)k,\tfrac{1}{2}\big((1+\omega)i-j-\omega
k\big),\\ 
&\tfrac{1}{2}\big(-(1+2\omega)i+\omega j+(1+3\omega)k\big)\big\},
\end{align*}
where $f_0=\frac{1}{2}(1-\omega i+(1+\omega)k)$. Diagonalizing $M_{\B^\vee}$ (as
a ternary quadratic form), we see that letting
\begin{align*}
 f_1&=\tfrac{1}{5}\big((2+\omega)i-j-(1+\omega)k\big), \\
 f_2&=\tfrac{1}{2}\big((1+\omega)i-j+(6+11\omega)k\big), \\
 f_3&=\tfrac{1}{5}\big(-(47+88\omega)i+(11+26\omega)j+(43+32\omega)k\big),
\end{align*}
the hypotheses of Proposition \ref{prop:adj=>quasidist_H} are satisfied by
$\E=\{f_0,f_1,f_2,f_3\}$. Hence, letting
\begin{align*}
 e_1&=\tfrac{1}{2}
\big(-(232+384\omega)-(79+119\omega)i-(265+212\omega)j-(2-5\omega)k\big), \\
e_2
&=\tfrac{1}{25}\big(268+444\omega+(6-31\omega)i-(17+84\omega)j-(1+\omega)k\big)
,\\
e_3 &=\tfrac{1}{10}\big(13+24\omega-(7+12\omega)i-(10+21\omega)j)-k\big),
\end{align*}
we get that $\E^\dagger=\{1,e_1,e_2,e_3\} $ is a quasi-good basis for
$R(1)_2$.

\medskip

\noindent\textit{Step 2}. We are descending from $f=H \perp \< 1 >$ to $g=H
\perp \< 2 >$. To illustrate Remark \ref{rmk:is_general}, we show that we can
construct a well-known order of discriminant $2$. For this purpose,
we conjugate the quasi-good basis found above by $x=e_1+e_2$ (which belongs to
$R(1)_2^\times$, by Table \ref{table:repr_unidades2}), thus obtaining another
quasi-good basis of $R(1)_2$. Proposition \ref{prop:suborders2} gives then that
$\{1,x e_1 x^{-1},2\cdot
xe_2x^{-1},xe_3x^{-1}\}$ is a basis of $R(2)_2$.

\medskip

\noindent\textit{Step 3}. Applying Proposition \ref{prop:latticeconstruction} to
this basis, we obtain that
\[
R(2) = \<1,i,j,\frac{1+i+j+k}{2}>_\OO
\]
is an Eicher order of discriminant 2. Note that the given basis is a basis for
the classical maximal order in the quaternion algebra $(-1,-1)_\Q$.

\subsubsection{Discriminant 6} Diagonalizing modulo
$3$ the quadratic form associated to $\{x\in {R(2)_3}^\vee:\tr(x)=0\}$,
we obtain using Proposition \ref{prop:adj=>quasidist} that
$\{1,\frac{1}{2}(i+j),\frac{k}{2},2(i-\nobreak j)\}$ is a quasi-good basis for
$R(2)_3$.

We use Table \ref{table:suborders} to descend from $\< 1,-1 ,1> $ to $\<
1,-1 ,3>$, using $\alpha_0=2,\alpha_1=-1$ as parameters, and we get that a
basis for $R(6)_3$ is given by $\{1,i+j-\frac{k}{2},-\frac{1}{2}
(i+j)+k,2(i-j)\}$. Using Proposition \ref{prop:latticeconstruction}, we get that
\[
R(6) = \<1, i+2k, 3k, \frac{1+i+j+k}{2}>_\OO
\]
is an Eichler order of discriminant 6.

\subsubsection{Discriminant $6\sqrt{5}$} The basis
$\E=\{\frac{1}{2},-i,-\frac{k}{2},-\frac{j}{4}\}$ of
${R(6)_{\sqrt{5}}}^\vee$ satisfies the hypotheses of Proposition
\ref{prop:adj=>quasidist}, but with a stronger congruence in
\textit{(a)}, namely mod $(\sqrt{5})^2$. This implies that the basis
for $R(6\sqrt{5})_{\sqrt{5}}$ obtained below is a quasi-good basis
(see Remark \ref{rmk:precision_param}).

We apply Table \ref{table:suborders} using
$\alpha_0=2+\frac{\omega}{3},\alpha_1=-2$ as parameters, and obtain
$\{1,-(1+\frac{\omega}{6})i+2k,i-(2+\frac{\omega}{3})k,-2j\}$ as a
basis for $R(6\sqrt{5})_{\sqrt{5}}$. Then Proposition
\ref{prop:latticeconstruction} gives
\[
R(6\sqrt{5}) = \<1, i+2k, 3\sqrt{5}k, \frac{1+i+j+7k}{2}>_\OO.
\]

\subsubsection{Discriminant $30$} To construct $R(30)$, we use the quasi-good basis
obtained in the previous step and
$\alpha_0=\frac{139}{82}+\frac{61}{123}\omega,\alpha_1=-2$ as
parameters. The basis for $R(30)_{\sqrt{5}}$ obtained in this way is
$\{1,-(\frac{34}{9}+\frac{31}{36}\omega)i+(\frac{303}{41}+\frac{68}{41}\omega)k,
(\frac{303}{82}+\frac{34}{41}\omega)i-(\frac{68}{9}+\frac{31}{18}\omega)k,-2j\}
$. Applying Proposition \ref{prop:latticeconstruction}, we obtain
\[
R(30) = \<1, i+2k, 15k, \frac{1+i+j+7k}{2}>_\OO.
\]

\subsection{Constructing the ideals}

We now proceed to compute ideal classes representatives for $R(30)$ iterating
Algorithm \ref{algo:ideales}, and using the quasi-good bases obtained above.

\medskip

Before starting, note that Equation (\ref{eqn:s.e.c.}) implies that only norm
one global units need to be considered when checking for equivalence of ideals
in Step $\mathit{4.1}$, since $K$ has narrow class number $1$.

\medskip

In \cite{vig} it is shown that $R(1)$ has class number equal to one. It is also
shown that $R(1)^{\times,1}=E_{120}$, where $E_{120}$ is the binary icosahedral
group. Using this explicit description we can avoid the use of LLL.
Furthermore, by Remark \ref{rmk:iterando_ideales}, this group contains all of
the global units needed in our computations.

\subsubsection{Discriminant 2} The calculation of $\cl(R(2))$ can be done
without using the algorithm. Since
$\vert R(2)^{\times,1} \vert =24$ and $[R(1)_2^\times:R(2)_2^\times]=5$ (see
Table \ref{table:repr_unidades2}), Corollary \ref{coro:classnumber} implies that
$\left[\Psi_{R(2)}^{R(1)}(R(1))\right]=[R(2)]$, from which we conclude that
$R(2)$ has class number equal to $1$ as well.

\subsubsection{Discriminant 6} We now compute $\cl(R(6))$, following closely
Algorithm \ref{algo:ideales}. We have $S=\{R(2)\}$ as input.

\medskip

\noindent\textit{Step 1}. To obtain a set of representatives for
$R(6)_3^\times \backslash R(2)_3^\times$, we use
$\{0, 1, 2, \omega, \break 2\omega, \omega+1, \omega+2, 2\omega+1, 2\omega+2\}$ as a
set of representatives for $k_3$.

\medskip

\noindent\textit{Step 2}. The ideal $R(2)_3$ is trivially generated by $1$,
so there is no need to use Remark \ref{rmk:localgenerator} in this case.

\medskip

\noindent\textit{Steps 3 and 4}. The set $\Psi_{R(6)}^{R(2)}(R(2))$ has ten
ideals, which we do not list for length reasons. The action of
$R(2)^{\times,1}$ on $\Psi_{R(6)}^{R(2)}(R(2))$ has two orbits, namely $[I]$
and $[J]$, where $I=R(6)$ and $J$ is the $R(6)$-ideal corresponding to the
fifth generator of $R(6)_3^\times \backslash R(2)_3^\times$, which is given by
\[
 J = \< i+(\omega-1)k, j-(\omega+1)k, 3k,
1+\dfrac{\omega}{2}\big(3-i-j-3k\big)>_\OO.
\]
This result agrees with Corollary \ref{coro:classnumber}, since $\vert
\Or(I)^{\times,1}
\vert =6,\,\vert \Or(J)^{\times,1}\vert =4$ and
$[R(2)_3^\times:R(6)_3^\times]=10$.

\medskip

Hence, the algorithm gives that $\cl(R(6))=\{[I],[J]\}$.

\subsubsection{Discriminant $6\sqrt{5}$}

We compute $\cl(R(6\sqrt{5}))$ in the same way as before. We avoid
writing down all the details but give enough information so the reader
may verify the computations easily.
\begin{itemize}
\item We take $\{0,1,2,3,4\}$ as a set of
representatives for $k_{\sqrt{5}}$.
\item $1$ is a local generator of $J_{\sqrt{5}}$, since
$J_{\sqrt{5}}=R(6)_{\sqrt{5}}$.

\item Denote
$\Psi_{R(6\sqrt{5})}^{R(6)}(I)=\{I_1,\dots I_6 \}$ and
$\Psi_{R(6\sqrt{5})}^{R(6)}(J)=\{J_1,\dots J_6 \}$, where the notation
is such that the $n$-th ideal corresponds to the $n$-th representative
of $R(6\sqrt{5})_{\sqrt{5}}^\times \backslash
R(6)_{\sqrt{5}}^\times$.

\item The action of $\Or(I)^{\times,1}$ on $\Psi_{R(6\sqrt{5})}^{R(6)}(I)$ gives
that $
\left[\Psi_{R(6\sqrt{5})}^{R(6)}(I)\right]
=\left\{[I_1],[I_4]\right\}$, and the action of $\Or(J)^{\times,1}$ on $
\Psi_{R(6\sqrt{5})}^{R(6)}(J)$ gives that
  $\left[\Psi_{R(6\sqrt{5})}^{R(6)}(J)\right]
  =\left\{[J_1],[J_2],[J_3],[J_5] \right\}$ (see Table
  \ref{table:cl(R(6u))} for an explicit description of these ideals).

\end{itemize}

Hence, $\cl(R(6\sqrt{5}))=\{[I_1],[I_4],[J_1],[J_2],[J_3],[J_5]\}$.
This agrees with Corollary \ref{coro:classnumber}, since we have that $\,\vert
\Or(I_1)^{\times,1}\vert=\vert\Or(I_4)^{\times,1}\vert=\vert\Or(J_1)^{
\times,1}\vert=\vert\Or(J_3)^{\times,1}\vert=2$, and $\vert
\Or(J_2)^{\times,1}\vert =\vert \Or(J_5)^{\times,1}\vert =4$.

\begin{table}[h]
\scalebox{.88}{
 \begin{tabular}{|l|l|l|}
\hline
  Ideal & Basis & Ideal above\\
\hline\hline
 $I_1$ & $i+2k,3\sqrt{5}k, 1,\frac{1}{2}(1+i+j+7k)$ & \multirow{2}{*}{$I$}\\
$I_4$&$i+2k,3\sqrt{5}k,j+14k,\frac{1}{2}(1+i+j+19k)$&\\
\hline
$J_1$&$i+(\omega-1)k,3\sqrt{5}k,j-(\omega+7)k,\frac
{1}{2}(1-i-j+(18+\sqrt{5}
)k)$  & \multirow{4}{*}{$J$}\\
$J_2$&$i+(\omega-1)k,3\sqrt{5}k,j-(\omega+4)k,\frac{1}{2}
(1-i-j+(6+\sqrt{5})k)
$&\\
$J_3$&$i+(\omega-1)k,3\sqrt{5}k,j-(\omega+1)k,\frac{1}{2}
(1-i+j+(6-\sqrt{5})k)$&\\
$J_5$ & $i+(\omega-1)k,3\sqrt{5}k,j-(\omega-5)k,\frac{1}{2}(1-i-j+\sqrt{5}k)$&\\
\hline
\end{tabular}}
\caption{Representatives for $\cl(R(6\sqrt{5}))$ \label{table:cl(R(6u))}}
\end{table}

\subsubsection{Discriminant $30$}
Finally, we compute $\cl(R(30))$. 
\begin{itemize}
\item The residue field is the same as before, so we take the same
  representatives for $k_{\sqrt{5}}$.
\item The local generators at $\sqrt{5}$ for the ideals in
  $\cl(R(6\sqrt{5}))$ were constructed using Corollary
  \ref{coro:accion_local_unid_a_der}. They are $1,1 -\frac{3}{4}i +
\frac{3}{2}k  ,  1  ,1 - \frac{i}{4} + \frac{k}{2},1 -
\frac{i}{2} + k$ and $1 - i + 2k$ for $I_1,I_4,J_1,J_2,J_3$ and $J_5$
respectively.

\item Since
$\Or(I_1)^{\times,1}=\Or(I_4)^{\times,1}=\Or(J_1)^{\times,1}=\Or(J_3)^{
\times,1}=\{\pm1\}$, between the ideals belonging to
$\Psi^{R(6\sqrt{5})}_{R(30)}(I_1)$,$\Psi^{R(6\sqrt{5})}_{R(30)}(I_4),\Psi^{
R(6\sqrt{5})}_{R(30)}(J_1)$ and $\Psi^{R(6\sqrt{5})}_{R(30)}(J_3)$ there are no
equivalences. 
\item The action of $\Or(J_2)^{\times,1}$ on $\Psi^{R(6\sqrt{5})}_{R(30)}(J_2)$
gives that $
\left[\Psi^{R(6\sqrt{5})}_{R(30)}(J_2)\right]=\left\{[J_{2,1}],[J_{2,2}],[J_{2,3
}]\right\}$, and the action of $\Or(J_5)^{\times,1}$ on
$\Psi^{R(6\sqrt{5})}_{R(30)}(J_5)$ gives that
$\left[\Psi^{R(6\sqrt{5})}_{R(30)}(J_5)\right]=\left\{[J_{5,1}],[J_{5,2}],[J_ {
5,3}] \right\}$ (see Table \ref{table:cl(R(30))}).
\end{itemize}

In particular, $\#\cl(R(30))=4\cdot 5 + 6 = 26.$

\begin{table}[h]
\scalebox{.89}{
 \begin{tabular}{|l|l|l|}
\hline
  Ideal & Basis & Ideal above\\
\hline\hline
$I_{1,1}$ &$i+2k,15k,1,\frac{1}{2}(1+i+j+7k)$&\multirow{5}{*}{$I_1$}\\
$I_{1,2}$ & $i+2k, 15k,j+2(1+3\omega)k, ,\frac{1}{2}(1+i+j+(7-6\sqrt{5})k) $
&
\\
$I_{1,3}$ & $ i+2k,15k, j-(1+3\omega)k, \frac{1}{2}(1+i+j+(-8+3\sqrt{5})k)$
&
\\
$I_{1,4}$ & $i+2k,15k, j-(4-3\omega)k, \frac{1}{2}(1+i+j+(8+3\sqrt{5})k)  $
& \\
$I_{1,5}$ & $i+2k,15k, j-(7+6\omega)k, \frac{1}{2}(1+i+j+(7+6\sqrt{5})k) $
& \\
\hline
$I_{4,1}$ & $i+2k,15k,j+2(2-3\omega)k,\frac{1}{2}(1+i+j-(11+6\sqrt{5})k) $ &
\multirow{5}{*}{$I_4$}\\
$I_{4,2}$ & $i+2k,15k,j-(7+3\omega)k,\frac{1}{2}(1+i+j+(4+3\sqrt{5})k) $ &
\\
$I_{4,3}$ & $i+2k,15k,j+(5+3\omega)k,\frac{1}{2}(1+i+j+(1-6\sqrt{5})k)$ & \\

$I_{4,4}$ & $i+2k,15k,j+2(1-3\omega)k,\frac{1}{2}(1+i+j+(19+6\sqrt{5})k) $ &
\\
$I_{4,5}$ & $i+2k,15k,j+14k,\frac{1}{2}(1+i+j+19k) $ & \\
\hline
$J_{1,1}$ &
$i+(2-5\omega)k,15k,j+5(1+\omega)k,\frac{1}{2}(1+i+j+(2-5\sqrt{5})k) $
&\multirow{5}{*}{$J_1$} \\
$J_{1,2}$ &
$i+(2-5\omega)k,15k,j+(2-4\omega)k,\frac{1}{2}(1+i+j+(17+4\sqrt{5})k) $ &
\\
$J_{1,3}$ &
$i+(2-5\omega)k,15k,j+(2-\omega)k,\frac{1}{2}(1+i+j-(13+2\sqrt{5})k)
$ & \\
$J_{1,4}$ &
$i+(2-5\omega)k,15k,j-(4+7\omega)k,\frac{1}{2}(1+i+j+(2+7\sqrt{5})k)
$ & \\
$J_{1,5}$ &
$i+(2-5\omega)k,15k,j-(1+7\omega)k,\frac{1}{2}(1+i+j+(2+\sqrt{5})k)
$ & \\
\hline
$J_{2,1} $ &
$i+(2-5\omega)k,15k,j+(5-7\omega)k,\frac{1}{2}(1+i+j-(4+5\sqrt{5})k)
$ & \multirow{3}{*}{$J_2$}\\
$J_{2,2} $ &
$i+(2-5\omega)k,15k,j+(5-4\omega)k,\frac{1}{2}(1+i+j+(11+4\sqrt{5})k) $ & \\
$J_{2,3}$ &
$i+(2-5\omega)k,15k,j+2(1+\omega)k,\frac{1}{2}(1+i+j+(11-2\sqrt{5})k)
$ & \\
\hline
$J_{3,1}$ &
$i+(2-5\omega)k,15k,j-(4-5\omega)k,\frac{1}{2}(1+i+j-(10+5\sqrt{5})k) $ &
\multirow{5}{*}{$J_3$}\\

$J_{3,2}$ &
$i+(2-5\omega)k,15k,j-(7+4\omega)k,\frac{1}{2}(1-i+j+(5+4\sqrt{5})k)
$ & \\
$J_{3,3}$ &
$i+(2-5\omega)k,15k,j+(5+2\omega)k,\frac{1}{2}(1+i+j+(5-2\sqrt{5})k) $ & \\

$J_{3,4}$ &
$i+(2-5\omega)k,15k,j+(2-7\omega)k,\frac{1}{2}(1+i+j+(20+7\sqrt{5})k) $ &
\\
$J_{3,5}$ &
$i+(2-5\omega)k,15k,j+(1+\omega)k,\frac{1}{2}(1+i+j-(10-\sqrt{5})k) $ &
\\

\hline
$J_{5,1} $
&$i+(2-5\omega)k,15k,j+(2+5\omega)k,\frac{1}{2}(1+i+j+(8-5\sqrt{5})k) $ &
\multirow{3}{*}{$J_5$}\\
$J_{5,2} $
&$i+(2-5\omega)k,15k,j-(1+4\omega)k,\frac{1}{2}(1+i+j+(15+4\sqrt{5})k) $ &
\\
$J_{5,3}$ &
$i+(2-5\omega)k,15k,j-2(2-\omega)k,\frac{1}{2}(1-i+j-(6-3\sqrt{5})k)
$ & \\
\hline
\end{tabular}}
\caption{Representatives for $\cl(R(30))$ \label{table:cl(R(30))}}
\end{table}

\medskip

We end this section remarking that all the results obtained agree with
Eichler's mass formula (\cite{vig}, Corollaire V.2.3).

\section{Appendix: The case
\texorpdfstring{$\pp=(2)$}{p=(2)}}\label{section:p=2}

Assume that $\pp=(2)$, i.e. that $2$ is inert in $K\slash\Q$. The only
difference with the case $\pp\nmid2$ lays in the ternary quadratic forms that we
need to consider to describe Bass orders. We will study these forms in this
section.

Consider the matrices
\[
 H= \left(\begin{array}{c c}
              0 & 1\\
              1 & 0
             \end{array}
\right),\qquad
J= \left(\begin{array}{c c}
              2 & 1\\
              1 & 2
             \end{array}
\right),
\]
and given $f,g$ quadratic forms, let $f\perp g$ denote their orthogonal
sum. According to Propositions 5.8 and 5.12 of \cite{joh}, isomorphism
classes of Bass orders in quaternion algebras over $K_2$ are in one to one
correspondence with the forms $f$ of Table \ref{table:qforms2}. As in the case
$\pp\nmid2$, orders of class A1 are the so called \emph{Eichler orders}.

\begin{table}[h]
\scalebox{1}{
\begin{tabular}{|l|c|c|c|r|}
\hline
Class & Form & Parameters & Condition & Algebra\\
\hline\hline
A1& $H\perp \< 2^s >$ & $ s\geq 0$ & & $1$\\
A2& $J \perp \< 2^s >$ & $s\geq 1$ & & $(-1)^s$\\
B& $\< 1 , 1
  , \delta_1 2^s >$ & $s\geq
0,\delta_1\in\{1,3\}$ & $\delta_1=1$ & $-1$\\
& & &  $\delta_1=3$ & $1$\\
C & $\< 1 , 6 , \delta_1 2^s
>$ & $s\geq 1,\delta_1\in\{1,3\}$ &
$\delta_1 = 1$  & $(-1)^s$\\
& & & $\delta_1 = 3$ & $(-1)^{s+1}$ \\
D & $\< 1 , 5 , \delta_1 2^s
>$ & $s\geq 3,\delta_1\in\{1,3\}$ &
$\delta_1 = 1$   & $(-1)^{s+1}$\\
& & & $\delta_1 = 3$ & $(-1)^s$ \\
E & $\< 1 , 2 , \delta_2 2^s
>$ & $s\geq 3,\delta_2\in\{1,5\}$ &
$\delta_2=1$ & $-1$\\
& & & $\delta_2 = 5$ & $1$ \\
F & $\< 1 , 14 , \delta_2 2^s
>$ & $s\geq 4,\delta_2\in\{1,5\}$ &
$\delta_2=1$ & $1$\\
& & & $\delta_2 = 5$ & $-1$ \\
G & $\< 1 , 10 , \delta_2 2^s
>$ & $s\geq 4,\delta_2\in\{1,5\}$ &
$\delta_2=1$ & $(-1)^{s+1}$\\
& & & $\delta_2 = 5$ & $(-1)^s$ \\
\hline
\end{tabular}}
\caption{Ternary quadratic forms, when $\pp=(2)$.\label{table:qforms2}}
\end{table}

On the right column of Table \ref{table:qforms2} we indicate
with $1$ or $-1$ whether the order $C_0(f)$ belongs to the matrix
algebra or to the division algebra. As before, this depends on whether
the norm form associated to $C_0(f)$ is isotropic or not. We omit the
calculations.

Figure
\ref{grafo2} shows how isomorphism classes of Bass orders in quaternion
algebras over $K_2$ are distributed.

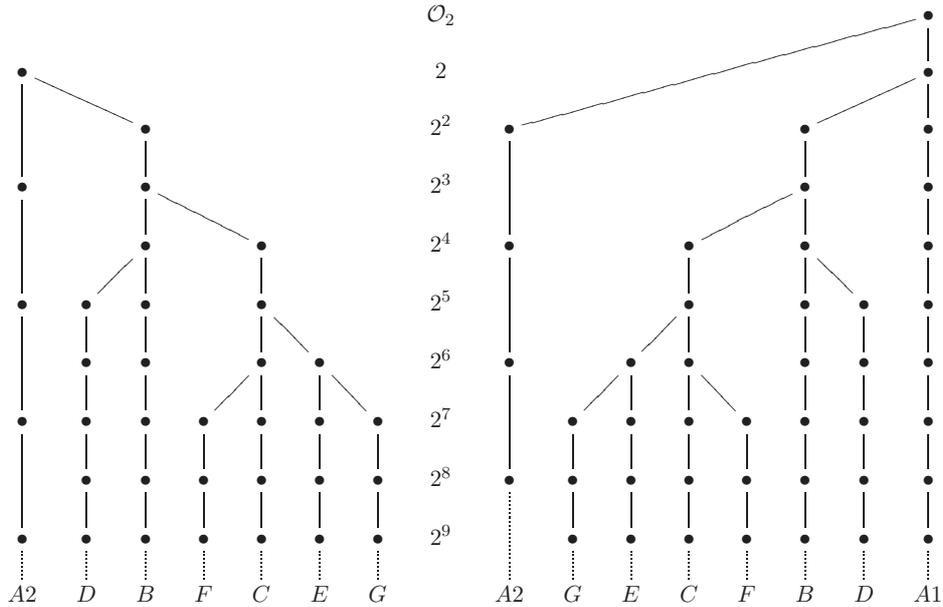
\begin{figure}[h]
\scalebox{0.8}{
\xymatrix@-1pc {
 & & & & & & & \OO_2 & & & & & & & & \bullet \ar@{-}[d] \ar@{-}[ddlllllll]\\
\bullet \ar@{-}[dd] \ar@{-}[drr]  & & & & & & &  2 & & & & & & & & \bullet
\ar@{-}[d] \ar@{-}[dll] \\
 & & \bullet \ar@{-}[d] & & & & &  2^2  & \bullet \ar@{-}[dd]& & & & & \bullet
\ar@{-}[d] & & \bullet \ar@{-}[d] \\
\bullet \ar@{-}[dd]   & & \bullet \ar@{-}[d] \ar@{-}[drr] & &  & &  & 2^3 & & &
& & & \bullet \ar@{-}[d]  \ar@{-}[dll] & & \bullet \ar@{-}[d]\\
 & & \bullet \ar@{-}[d] \ar@{-}[dl] & & \bullet \ar@{-}[d]  & &  & 2^4 & \bullet
\ar@{-}[dd] & & & \bullet \ar@{-}[d] & & \bullet \ar@{-}[d]  \ar@{-}[dr] & &
\bullet \ar@{-}[d]\\
\bullet \ar@{-}[dd]& \bullet \ar@{-}[d] & \bullet  \ar@{-}[d] & &  \bullet
\ar@{-}[d] \ar@{-}[dr] & & & 2^5 & & & & \bullet \ar@{-}[d] \ar@{-}[dl] &
&\bullet \ar@{-}[d] & \bullet \ar@{-}[d]& \bullet \ar@{-}[d]\\
 &  \bullet \ar@{-}[d] & \bullet \ar@{-}[d] & &  \bullet \ar@{-}[d] \ar@{-}[dl]
&   \bullet \ar@{-}[d] \ar@{-}[dr] & & 2^6  & \bullet \ar@{-}[dd] & &
\bullet\ar@{-}[d]\ar@{-}[dl] & \bullet \ar@{-}[d]\ar@{-}[dr]& & \bullet
\ar@{-}[d]& \bullet \ar@{-}[d]& \bullet \ar@{-}[d]\\
\bullet \ar@{-}[dd]& \bullet \ar@{-}[d] & \bullet \ar@{-}[d] & \bullet
\ar@{-}[d]& \bullet \ar@{-}[d] & \bullet \ar@{-}[d]& \bullet\ar@{-}[d] & 2^7 & &
\bullet \ar@{-}[d]& \bullet \ar@{-}[d]&\bullet \ar@{-}[d] & \bullet \ar@{-}[d]&
\bullet \ar@{-}[d]& \bullet \ar@{-}[d]&\bullet \ar@{-}[d]\\
 & \bullet \ar@{-}[d] & \bullet \ar@{-}[d] & \bullet \ar@{-}[d]& \bullet
\ar@{-}[d] & \bullet \ar@{-}[d]& \bullet\ar@{-}[d] & 2^8 & \bullet \ar@{..}[dd]&
\bullet \ar@{-}[d]&\bullet \ar@{-}[d] & \bullet \ar@{-}[d]&\bullet \ar@{-}[d] &
\bullet \ar@{-}[d]& \bullet \ar@{-}[d]&\bullet \ar@{-}[d]\\
\bullet \ar@{..}[d] & \bullet \ar@{..}[d] & \bullet \ar@{..}[d] & \bullet
\ar@{..}[d]& \bullet \ar@{..}[d] & \bullet \ar@{..}[d]& \bullet\ar@{..}[d] & 2^9
& & \bullet \ar@{..}[d]& \bullet \ar@{..}[d]& \bullet \ar@{..}[d]& \bullet
\ar@{..}[d]& \bullet \ar@{..}[d]&\bullet \ar@{..}[d] &\bullet \ar@{..}[d]\\
A2 & D & B & F & C & E & G & & A2 & G & E & C & F & B & D & A1}
}
\caption{Isomorphism classes of Bass orders, when $\pp=(2)$. \label{grafo2}}
\end{figure}

The notion of good basis must be extended to include the
non-diagonal forms of Table \ref{table:qforms2}.

\begin{defi}
Let $\O_2$ be a Bass order in correspondence with the form $f=H \perp
\< 2^s >$ (respectively, $f=J \perp \< 2^s >$). A basis $\B=\{1,e_1,e_2,e_3\}$
of $\O_2$ as an $\OO_2$-module is \emph{good} if the $e_i$ satisfy
\begin{align}\label{eqn:tabla_mult_H}
 e_1^2&=0, & e_1e_2&=2^s(1-e_3), & e_2 e_1&=2^s e_3, \notag \\
 e_2^2&=0, & e_2 e_3 &=0, & e_3 e_2 &= e_2,\\
e_3^2&=e_3, & e_3 e_1&=0, & e_1 e_3&=e_1. \notag
\end{align}

Respectively, if the $e_i$ satisfy
\begin{align}\label{eqn:tabla_mult_J}
 e_1^2&=-2^s, & e_1e_2&=2^s(1-e_3), & e_2 e_1&=2^s e_3, \notag \\
 e_2^2&=-2^s, & e_2 e_3 &=-e_1, & e_3 e_2 &= e_1+e_2,\\
e_3^2&=e_3-1, & e_3 e_1&=-e_2, & e_1 e_3&=e_1+e_2. \notag
\end{align}
\end{defi}

Note that in such bases the norm form is given by
\begin{equation}\label{eqn:norma2}
  N(x)=\begin{cases}
    x_0^2+x_0 x_3 -2^s x_1 x_2, & f=H \perp\< 2^s >,\\
x_0^2+x_0 x_3 +x_3^2 -2^s x_1 x_2 +2^s x_1^2 + 2^s x_2^2, & f=J
\perp\< 2^s >.
   \end{cases}
\end{equation}

\begin{remark}
 
We can extend Remark \ref{rmk:dualidad-recipr} to non-diagonal forms as follows.
Let $\O_2$ be an order in correspondence with $f=H \perp \< 2^s >$, and let $\B$
be a good basis of $\O_2$. Then,
\[
 -2^s \cdot M_{\B^\vee} = \left(\begin{array}{c c c}
              0 & 1 & 0\\
              1 & 0 & 0 \\
          0 & 0 & 2^{s+1}
             \end{array}
\right).
\]
Respectively if $\O_2$ is in correspondence with $f=J \perp \< 2^s >$, then
\[
 2^s 3 \cdot M_{\B^\vee} = \left(\begin{array}{c c c}
              2 & 1 & 0\\
              1 & 2 & 0 \\
          0 & 0 & 2^{s+1}
             \end{array}
\right).
\]
\end{remark}

\medskip

In order to state the analogue of Proposition \ref{prop:suborders}, using
Hensel's lemma take $\mu_1,\dots,\mu_8\in \OO_2$ satisfying:
\begin{align*}
&\bullet \mu_1^2 =-7 &  &\bullet 3\mu_2^2=-13\\
&\bullet 25\mu_3^2=1 & & \bullet 9 \mu_4^2 =1\\
&\bullet 3 \mu_5^2 =-5 & & \bullet \mu_6^2=-15\\
&\bullet 3\mu_7^2=-29  & &\bullet 3\mu_8^2=-533.
\end{align*}

\begin{prop}\label{prop:suborders2}

Let $\O_2$ be an order corresponding to a form $f$ from Table
\ref{table:qforms2}, and let $\{1,e_1,e_2,e_3\}$ be a
good basis for $\O_2$. Let $g$ be a form beneath $f$, and let
$d_1,d_2,d_3$ be as in Table \ref{table:suborders2}.

Then, $\O_2'=\< 1,d_1,d_2,d_3>_{\OO_2}$ is a
maximal suborder of $\O_2$ in correspondence with the form $g$, of
which $\{1,d_1,d_2,d_3\}$ is a good basis.
\end{prop}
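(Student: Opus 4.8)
The plan is to follow the same two-stage strategy used in the non-dyadic case, where Proposition \ref{prop:suborders} supplies the multiplication table and Proposition \ref{prop:qbuena=>subord} upgrades this to a maximal suborder; here the two stages are merged, and since the hypothesis is a genuine (rather than quasi-) good basis, no congruence arguments are needed. First I would verify, case by case according to the rows of Table \ref{table:suborders2}, that the elements $d_1,d_2,d_3$ satisfy exactly the multiplication relations of a good basis for $g$: the diagonal relations \eqref{eqn:tabla_mult} when $g$ is one of the diagonal forms of classes B--G of Table \ref{table:qforms2}, and the non-diagonal relations \eqref{eqn:tabla_mult_H} or \eqref{eqn:tabla_mult_J} when $g$ has class A1 or A2. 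As in the proof of Proposition \ref{prop:suborders}, each identity reduces to a direct computation using the relations satisfied by $e_1,e_2,e_3$ as a good basis for $f$; the auxiliary elements $\mu_1,\dots,\mu_8$ are precisely the Hensel square roots needed to make these identities hold exactly.

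Once the relations for $g$ are checked, the multiplication table shows that $\O_2'=\langle 1,d_1,d_2,d_3\rangle_{\OO_2}$ is closed under multiplication, hence an order, and that $\{1,d_1,d_2,d_3\}$ is a good basis of it. By the Brzezinski--Johnston correspondence recalled above (Propositions 5.8 and 5.12 of \cite{joh}, together with the shape \eqref{eqn:norma2} of the norm form in a good basis), $\O_2'$ is therefore the Bass order in correspondence with $g$; in particular its reduced discriminant $d(\O_2')$ is the discriminant level attached to $g$ in Figure \ref{grafo2}.

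Finally I would establish containment and maximality. Reading off Table \ref{table:suborders2}, each $d_i$ is an $\OO_2$-linear combination of $1,e_1,e_2,e_3$, so $\O_2'\subseteq\O_2$. Since $\O_2$ and $\O_2'$ correspond to $f$ and $g$, their reduced discriminants are the levels attached to $f$ and $g$ in Figure \ref{grafo2}; combined with the relation $d(\O_2')=[\O_2:\O_2']\,d(\O_2)$ this forces $[\O_2:\O_2']$ to be $\pp$ or $\pp^2$, according to whether the edge joining $f$ to $g$ spans one or two levels. When the index is $\pp$ the suborder is automatically maximal; when it is $\pp^2$, any intermediate order would contain the Bass order $\O_2'$, hence would itself be Bass of intermediate discriminant, which the two-level edge of Figure \ref{grafo2} excludes, in agreement with Proposition \ref{prop:e=1_o_2}. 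Thus $\O_2'$ is a maximal suborder of $\O_2$. The main obstacle is purely the case-by-case bookkeeping: the dyadic graph has seven classes and many more edges than the non-dyadic one, and the transitions into and out of the non-diagonal forms of classes A1 and A2 must be handled with the relations \eqref{eqn:tabla_mult_H} and \eqref{eqn:tabla_mult_J} rather than \eqref{eqn:tabla_mult}; verifying the exact multiplication identities and the matching of discriminants in every one of these cases is where the real work lies.
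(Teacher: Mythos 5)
Your proposal is correct and follows essentially the same route as the paper: the paper's proof simply states that all cases of Table \ref{table:suborders2} can be checked directly, i.e.\ by verifying the multiplication relations \eqref{eqn:tabla_mult}, \eqref{eqn:tabla_mult_H} or \eqref{eqn:tabla_mult_J} for the $d_i$ exactly as you describe, with the maximality and discriminant bookkeeping left implicit in the correspondence of Figure \ref{grafo2}. The only organizational difference is that the paper routes many of the rows through the uniform statements of Propositions \ref{prop:general_1}, \ref{prop:general_2} and \ref{prop:general_3} rather than treating every case ad hoc, which reduces the case-by-case burden you correctly identify as the real work.
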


\begin{table}[h]
\scalebox{1}{
\begin{tabular}{|l|l|l|l|}
\hline
Form & Form beneath & Good basis for $\Op'$\\
\hline\hline

$H \perp \< 1 >$ & $J \perp\< 4
>$ & $d_1=2(\mu_1-2 e_1 -3 e_2-2\mu_1e_3),$\\
& & $d_2=2(-\mu + 3e_1+2 e_2 +2 \mu_1 e_3),$\\
& & $d_3= -2 -\mu_1 e_1\mu_1 e_2+5e_3$\\

$H \perp \<2^s > $ & $H \perp \<2^{s+1}
> $ & $d_1=e_1, d_2= 2 e_2, d_3=e_3$\\

$H \perp \< 2 >$ & $\< 1
, 1 , 3
>$ & $d_1=\mu_1-e_1+2e_2-2\mu_1 e_3,$\\
& &  $d_2= -5+2\mu_1 e_1 +\mu_1 e_2+10e_3,$\\
& & $d_3=\mu_1+3 e_1 + e_2 -2\mu_1e_3$\\

$J \perp \< 2^s > $& $ J \perp \<
2^{s+2} > $& $d_1= 2 e_1, d_2=2 e_2, d_3=e_3$\\

$J \perp \< 2 >$ & $\< 1
, 1 , 1
>$ & $d_1=\mu_2-e_1+2e_2-2\mu_2 e_3,$\\
& & $d_2= \mu_2- 2 e_1 + e_2 -2 \mu_2 e_3,$\\
& & $d_3=-3-\mu_2 e_1+ \mu_2 e_2 + 6 e_3$\\

 $\< 1 , 1 ,
\delta_1  2^s >$ & $\< 1 , 1
, \delta_1  2^{s+1} >$ &
$d_1=e_1-e_2,d_2=e_1+e_2,d_3=e_3$\\

 $\< 1 , 2 ,
\delta_2 2^s >$ & $\< 1 , 2
, \delta_3  2^{s+1} >$ &
$d_1=-2e_2,d_2=e_1,d_3=e_3$\\

 $\< 1 , 5 ,
2^s >$ & $\< 1 , 5
, 3 \cdot2^{s+1} >$ &
$d_1=e_1-5e_2,d_2=e_1+e_2,d_3=e_3$\\

 $\< 1 , 6 ,
2^s >$ & $\< 1 , 6
, 3 \cdot 2^{s+1} >$ &
$d_1=-6e_2,d_2=e_1,d_3=e_3$\\

 $\< 1 , 10
, 2^s >$ & $\< 1
, 10 , 5 \cdot2^{s+1}
>$ & $d_1=-10e_2,d_2=e_1,d_3=e_3$\\

 $\< 1 , 1 ,
6 >$ & $\< 1 , 6
, 6 >$ & $d_1=6 e_3,d_2=e_2,d_3=-e_1$\\

 $\< 1 , 1 ,
2 >$ & $\< 1 , 6
, 2 >$ & $d_1=2e_1+6
e_3,d_2=e_2,d_3=2e_3-e_1$\\

 $\< 1 , 1 ,
2^2 >$ & $\< 1 , 5
, 3 \cdot 2^3 >$ & $d_1=e_1-5 e_2+4
e_3,d_2=e_1+e_2+4 e_3,$\\& & $d_3=e_3-e_1$\\

 $\< 1 , 14
, \delta_2 2^s >$ & $\< 1
, 14 , \delta_2
2^{s+1} >$ & $d_1=e_1-14\mu_1 e_2,d_2=\mu_1 e_1+e_2,d_3=e_3$\\

 $\< 1 , 5 ,
3 \cdot2^s >$ & $\< 1 , 5
,  2^{s+1} >$ & $d_1=e_1 - 5 \mu_2 e_2
,d_2=\mu_2 e_1+e_2,d_3=e_3$\\

 $\< 1 , 10
, 5 \cdot 2^s >$ & $\< 1
, 10 ,  2^{s+1}
>$ & $d_1=-10\mu_3 e_2,d_2=\mu_3 e_1,d_3=e_3$\\

 $\< 1 , 6 ,
3 \cdot 2^s >$ & $\< 1 , 6
,   2^{s+1} >$ & $d_1=2 e_1-6\mu_4
e_2,d_2=\mu_4 e_1 + 2 e_2,$\\& &$d_2=\mu_4 e_1 + 2 e_2,d_3=e_3$\\

 $\< 1 , 6 ,
3 \cdot 2^2 >$ & $\< 1 , 2
, 2^3 >$ & $d_1=6\mu_4(-\mu_5 e_2+2
e_3),d_2=\mu_4 e_1,$\\& & $d_3=e_2 +\mu_5 e_3$\\

 $\< 1 , 2 ,
2^3 >$ & $\< 1 , 1
, 2^4 >$ & $d_1=-2 e_2+ 8 e_3,d_2=
e_1,d_3=e_2 +5 e_3$ \\

 $\< 1 , 2 ,
5 \cdot 2^3 >$ & $\< 1 , 10
, 5 \cdot 2^4 >$ & $d_1=-2 \mu_6 e_2+
40 e_3,d_2= e_1,$\\& & $d_3=e_2 +\mu_6 e_3$ \\

 $\< 1 , 6 ,
3 \cdot 2^3 >$ & $\< 1 , 14
, 2^4 >$ & $d_1=6 \mu_4(-\mu_5 e_2+4
e_3),d_2=\mu_4 e_1,$\\& & $d_3=e_2 +\mu_5 e_3$ \\

$\< 1 , 6
, 2^2 >$ & $\< 1
, 2 , 5\cdot
2^3 >$ & $d_1=2(\mu_7 e_1 - 3\mu_7 e_2 -10
e_3),$\\
& & $d_2=e_1+2e_2,d_3=e_1 -3 e_2+\mu_7 e_3$ \\

$\< 1 , 6
, 2^3 >$ & $\< 1
, 14 ,
5\cdot 2^4 >$ & $d_1=2(\mu_7 e_1 - 3\mu_7 e_2 -60
e_3),$\\
& & $d_2=e_1+2e_2,d_3=3 e_1 -9 e_2+\mu_7 e_3$ \\ \hline
\end{tabular}}
\caption{Suborders \label{table:suborders2}}
\end{table}

\begin{proof}

All the cases can be easily checked. Many of them follow from Propositions
\ref{prop:general_1}, \ref{prop:general_2} and \ref{prop:general_3} below (see
the proof of Proposition \ref{prop:repr_unidades2}).

\end{proof}

The notion of quasi-good basis remains unchanged, as well as the use of such
bases for computing suborders and representatives for the quotients
$(\O_2')^\times\backslash\O_2^\times$. We must show how to quasi-good bases in
the $2$-adic case.

\begin{remark}\label{rmk:hensel2}
 
Proposition \ref{prop:adj=>quasidist} still holds for diagonal forms, setting
$n=3v_2(a)+2$ in order to be able to use Hensel's lemma in its proof.

\end{remark}

\begin{prop}\label{prop:adj=>quasidist_H}
Let $\O_2$ be an order in correspondence with $f=H \perp \< 2^s >$. Let
$\E=\{f_0,f_1,f_2,f_3\}$ be a basis of $\O_2^\vee$ satisfying
(\ref{eqn:base-dual}). Assume that $\E$ satisfies the following
conditions.

\begin{enumerate}
 \item There exists $\beta\in\OO_2$ such that
\[
 -2^s \cdot M_{\E} \equiv \left(\begin{array}{c c c}
              0 & 1 & 0\\
              1 & 0 & 0 \\
          0 & 0 & \beta
             \end{array}
\right) \mod (M_3(2^3 \OO_2)).
\]
\item $\det(M_{\E})=2^{1-2s}$.
\end{enumerate}

Let $e_i=-2^s\cdot f_j\bar{f_k}$, where $(i,j,k)$ is an even
permutation of $(1,2,3)$. Then, $\E^\dagger=\{1,e_1,e_2,e_3\}$ is a quasi-good
basis of $\O_2$.

\end{prop}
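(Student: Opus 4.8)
The plan is to mirror exactly the structure of the proof of Proposition \ref{prop:adj=>quasidist} in the nondyadic case, adapting it to the non-diagonal form $H$. The goal is to produce a genuine good basis $\tilde\E^\dagger$ of $\O_2$ that is congruent to $\E^\dagger$ modulo $\pp\O_2$, which by Definition \ref{def:qbuena} is exactly what it means for $\E^\dagger$ to be quasi-good.

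First I would invoke the lifting mechanism of Lemma \ref{lemma:levantamiento}. The obstacle is that Lemma \ref{lemma:levantamiento} diagonalizes a symmetric matrix, whereas here the target shape is the anti-diagonal block $H\perp\langle\beta\rangle$. So I would first establish a dyadic analogue of that lemma whose target is the matrix $\left(\begin{smallmatrix} 0&1&0\\1&0&0\\0&0&\beta\end{smallmatrix}\right)$ rather than a diagonal matrix; the same row/column operation argument goes through, using the off-diagonal $1$'s in the $H$-block as pivots. Using hypothesis \textit{(a)}, which gives congruence mod $(2^3)$, and letting the precision tend to infinity exactly as in the proof of Proposition \ref{prop:adj=>quasidist}, I obtain a matrix $C\equiv I \mod (2^?\OO_2)$ such that $-2^s\cdot C^t M_\E C$ equals $H\perp\langle\gamma\rangle$ on the nose, with $\gamma$ close to $\beta$ and hence to $2^{s+1}$ by Remark's duality computation.

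Next I would adjust $\gamma$ to the exact value $2^{s+1}$ using Hensel's lemma, which requires the extra room (mod $2^3$ rather than mod $2$) demanded in hypothesis \textit{(a)}, precisely as Remark \ref{rmk:hensel2} warns for diagonal forms. Here I use hypothesis \textit{(b)}, the determinant normalization $\det(M_\E)=2^{1-2s}$, to pin down the unit square factor so that the $(3,3)$ entry can be rescaled to $2^{s+1}$ by a unit whose square root exists and is $\equiv 1$ to high $2$-adic precision. Taking determinants as in the nondyadic proof shows the scaling is consistent. Setting $\tilde C$ to be $C$ composed with this rescaling, I get $-2^s\cdot \tilde C^t M_\E\tilde C$ equal exactly to the Gram matrix of the Remark, so by the extended duality statement the dual basis $\tilde\E$ has $\tilde\E^\dagger$ a good basis of $\O_2$.

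Finally I would transfer the congruences: the $x_i$'s and $C$ being $\equiv 1$, $\equiv I$ to sufficient $2$-adic precision forces $\tilde f_i \equiv f_i \mod(\pp\O_2^\vee)$, and then, invoking the dyadic analogue of the inclusion $4ab\cdot\O^\vee\O^\vee\subseteq\O$ (Proposition 3.2 of \cite{brz-gor}, here applied with the appropriate discriminant $2^s$), this yields $\tilde e_i\equiv e_i \mod(\pp\O_2)$, completing the quasi-goodness. The main obstacle I anticipate is bookkeeping the $2$-adic precision: the factor $2^s$ scaling the Gram matrix, combined with the need for mod-$2^3$ hypotheses to run Hensel's lemma at the dyadic prime, means the precision constants here are more delicate than in the nondyadic case, and I would need to check carefully that the congruence surviving to the end is mod $\pp$ and not merely mod a higher power's complement.
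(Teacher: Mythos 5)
Your sketch follows exactly the route the paper intends: the anti-diagonal lifting lemma you propose is precisely Lemma~\ref{lemma:levantamientoH}, and the remaining steps (passing to the limit, normalizing the $(3,3)$ entry via Hensel's lemma and the determinant condition, then transferring the congruence to $\E^\dagger$ through the duality relation) mirror the proof of Proposition~\ref{prop:adj=>quasidist}, which is what the authors mean when they say the proof is ``quite similar'' and omit it. No discrepancy to report.
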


The following lifting lemma is needed in the proof of Proposition
\ref{prop:adj=>quasidist_H}, which is quite similar to the proof of Proposition
\ref{prop:adj=>quasidist}, and we omit.

\begin{lemma}\label{lemma:levantamientoH}
Let $m$ be an integer such that $m\geq3$, and let $A\in
M_3(\OO_2)$ be a symmetric matrix. Assume that there exists $C\in GL_3(\OO_2)$
such that
\[
 C^t A C \equiv \Bigg(\begin{array}{c c c}
              0 & \alpha & 0\\
              \alpha & 0 & 0 \\
          0 & 0 & \beta
             \end{array}
\Bigg) \mod (M_3(2^m \OO_2)),
\]
with $v_2(\alpha)=0$. Then, there exists $C'\in GL_3(\OO_2)$
satisfying $C'\equiv C \mod (M_3(2^{m-1} \OO_2))$ such that
\[
 C'^t A C' \equiv \Bigg(\begin{array}{c c c}
              0 & \alpha' & 0\\
              \alpha' & 0 & 0 \\
          0 & 0 & \beta'
             \end{array}
\Bigg) \mod (M_3(2^{m+1} \OO_2)),
\]
with $\alpha'\equiv\alpha\mod(2^{m-1}\OO_2)$.
\end{lemma}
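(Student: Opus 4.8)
The plan is to follow the proof of Lemma~\ref{lemma:levantamiento} closely, with the scalar pivot $\alpha$ (there satisfying $v_\pp(\alpha)=0$) replaced by the invertible hyperbolic block $\left(\begin{smallmatrix} 0 & \alpha \\ \alpha & 0 \end{smallmatrix}\right)$, whose determinant $-\alpha^2$ is a unit of $\OO_2$. I would write
\[
 C^t A C = \left(\begin{smallmatrix} 0 & \alpha & 0 \\ \alpha & 0 & 0 \\ 0 & 0 & \beta \end{smallmatrix}\right) + 2^m S, \qquad S=(s_{ij})\in M_3(\OO_2) \text{ symmetric},
\]
and regard $A$ as a symmetric bilinear form, denoting by $\langle x,y\rangle=x^t A y$ the associated pairing and by $v_1,v_2,v_3$ the columns of $C$, so that $2^m s_{ij}$ is the perturbation of $\langle v_i,v_j\rangle$ away from the model form. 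The aim is to modify this basis within $2^{m-1}\OO_2$ so that its Gram matrix recovers the displayed shape modulo $2^{m+1}$.

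I would do this in two steps. First, clear the off-block pairings $\langle v_1,v_3\rangle$ and $\langle v_2,v_3\rangle$, both in $2^m\OO_2$, by replacing $v_3$ with $v_3+s\,v_1+t\,v_2$; since $\langle v_1,v_2\rangle=\alpha+2^m s_{12}$ is a unit the resulting linear system for $s,t$ has unit determinant and is solvable with $s,t\in 2^m\OO_2$, so $v_3$ shifts by $O(2^m)$ and $\langle v_3,v_3\rangle=\beta+2^m s_{33}$ changes only by $O(2^{2m})$. Second --- the crux --- I would clear the diagonal perturbations $\langle v_1,v_1\rangle=2^m s_{11}$ and $\langle v_2,v_2\rangle=2^m s_{22}$ inside the hyperbolic plane, setting $v_1'=v_1+t_1v_2$ and $v_2'=v_2+t_2v_1$. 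From
\[
 \langle v_1',v_1'\rangle = 2^m s_{11} + 2t_1(\alpha+2^m s_{12}) + t_1^2\,2^m s_{22}
\]
the choice $t_1=-2^{m-1}s_{11}/(\alpha+2^m s_{12})$ cancels the first two terms exactly, leaving only $t_1^2\,2^m s_{22}\in 2^{3m-2}\OO_2$; symmetrically for $t_2$.

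The factor $2$ multiplying the cross term is precisely what forces $t_1,t_2\in 2^{m-1}\OO_2$ instead of $2^m\OO_2$, explaining both the congruence $C'\equiv C\bmod 2^{m-1}$ and the fact that the $2$-adic loss is a single power of $2$, with no dependence on a valuation (unlike the $\pp^r$ loss in Lemma~\ref{lemma:levantamiento}). Packaging the two steps as a single matrix $C_0$ with $C'=C+2^{m-1}C_0$, I would finish by expanding
\[
 C'^t A C' = C^t A C + 2^{m-1}(N+N^t) + 2^{2(m-1)}C_0^t A C_0, \qquad N=C_0^t A C,
\]
and checking entrywise that the $(1,1),(2,2),(1,3),(2,3)$ slots vanish modulo $2^{m+1}$ --- the off-block ones staying cleared because $v_3'$ is orthogonal to $v_1,v_2$, hence to $v_1',v_2'$, modulo $2^{m+1}$. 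The one correction that is not manifestly negligible is the quadratic term $t_1t_2(\alpha+2^m s_{12})\in 2^{2m-2}\OO_2$ occurring in $\langle v_1',v_2'\rangle$, and this is exactly where the hypothesis $m\ge3$ enters, via $2m-2\ge m+1$, yielding $\alpha':=\langle v_1',v_2'\rangle\equiv\alpha\bmod 2^{m}$ (in particular $\bmod\,2^{m-1}$, and still a unit). Finally $C'\equiv C\bmod 2\OO_2$ gives $\det C'\in\OO_2^\times$, so $C'\in GL_3(\OO_2)$. The main obstacle is thus arranging the two clearing steps so that the second does not reintroduce off-block terms, while carefully tracking the single factor of $2$ that completing the ``square'' in a hyperbolic plane necessarily costs.
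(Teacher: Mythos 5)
Your proof is correct and follows essentially the same route as the paper: both perturb $C$ by a correction of size $2^{m-1}$, using the unit entries $\langle v_1,v_2\rangle$ of the hyperbolic block as pivots to clear the $(1,1),(2,2),(1,3),(2,3)$ slots, with the factor of $2$ from symmetrizing the cross terms accounting for the loss of one power of $2$ and the hypothesis $m\geq 3$ controlling the quadratic error term $2^{2(m-1)}C_0^tAC_0$. The paper merely packages your two clearing steps into a single matrix $C_0$ constructed by row operations, so the difference is purely notational.
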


\begin{proof}

Write
\[
 C^t A C = \Bigg(\begin{array}{c c c}
              0 & \alpha & 0\\
              \alpha & 0 & 0 \\
          0 & 0 & \beta
             \end{array}
\Bigg)  +  2^m \Bigg( \begin{array}{ccc}
a & b & c\\
b & d & e\\
c & e & f
\end{array} \Bigg),
\]
with $a,b,\dots,f\in\OO_2$. We claim that there exists a matrix
$C_0\in GL_3(\OO_2)$ such that
\[
 C_0^t A C = \Bigg(\begin{array}{c c c}
               -a  & b' & c'2^m\\
              d' & -d & e'2^m \\
          -2c & -2e & f'
             \end{array}
\Bigg),
\]
with $b',c',d',e',f'\in \OO_2$. This can be shown by
performing row operations on $C^t A C$, using the $(1,2)$ and $(2,1)$ entries as
pivots to first obtain zeroes at the $(1,1),(2,2),(3,1)$ and $(3,2)$ entries,
and then obtain $-a,-d,-2c$ and $-2e$ at the $(1,1),(2,2),(3,1)$ and
$(3,2)$ entries respectively.

Now let $C'=C+2^{m-1}C_0$. Then,
\[
 C'^t A C' = \Bigg(\begin{array}{c c c}
              0 & \alpha' & c'2^{2m-1}\\
              \alpha' & 0 & e'2^{2m-1} \\
          c'2^{2m-1} & e'2^{2m-1} & \beta'
             \end{array}
\Bigg)+2^{2(m-1)}C_0^t A C_0.
\]
where $\alpha'=\alpha+2^{m-1}(b'+d')$. Since $2(m-1)\geq m+1$, we
are done.
\end{proof}

For orders of class A2 we only state the corresponding analogue of Proposition
\ref{prop:adj=>quasidist_H}.

\begin{prop}\label{prop:adj=>quasidist_J}
Let $\O_2$ be an order in correspondence with $f=J \perp \< 2^s >$. Let
$\E=\{f_0,f_1,f_2,f_3\}$ be a basis of $\O_2^\vee$ satisfying
(\ref{eqn:base-dual}). Assume that $\E$ satisfies the following conditions.

\begin{enumerate}
 \item There exists $\beta\in\OO_2$ such that
\[
 2^s 3 \cdot M_{\E} \equiv \left(\begin{array}{c c c}
              0 & 1 & 0\\
              1 & 0 & 0 \\
          0 & 0 & \beta
             \end{array}
\right) \mod (M_3(2^3 \OO_2)).
\]
\item $\det(M_{\E})=2^{1-2s}3^{-2}$.
\end{enumerate}

Let $e_i=2^s 3\cdot f_j\bar{f_k}$, where $(i,j,k)$ is an even permutation of
$(1,2,3)$. Then, $\E^\dagger=\{1,e_1,e_2,e_3\}$ is a quasi-good basis of $\O_2$.
 
\end{prop}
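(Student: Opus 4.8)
The plan is to mirror, almost verbatim, the proofs of Propositions~\ref{prop:adj=>quasidist} and~\ref{prop:adj=>quasidist_H}, replacing the hyperbolic block by the anisotropic block $J$. The single new ingredient required is a lifting lemma tailored to $J$, playing here the role that Lemma~\ref{lemma:levantamientoH} plays for Proposition~\ref{prop:adj=>quasidist_H}. Concretely, I would prove: given a symmetric $A\in M_3(\OO_2)$ and $C\in GL_3(\OO_2)$ such that $C^t A C$ is congruent modulo $M_3(2^m\OO_2)$, for some $m\geq3$, to the block-diagonal matrix with upper block $J$ and lower entry $\beta$, one can produce $C'\equiv C \bmod (M_3(2^{m-1}\OO_2))$ improving this to a congruence modulo $M_3(2^{m+1}\OO_2)$. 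The skeleton is that of Lemma~\ref{lemma:levantamientoH}: the off-diagonal entry of the block is a unit, so it serves as a pivot for row operations clearing the $2^m$-error; writing $C'=C+2^{m-1}C_0$ and using $2(m-1)\geq m+1$ for $m\geq3$ yields the improvement, which is exactly why condition~(a) demands a congruence modulo $2^3$ (in the spirit of Remark~\ref{rmk:hensel2}).

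Iterating this lemma and letting $m\to\infty$ produces $C\in GL_3(\OO_2)$ with $C\equiv I \bmod(M_3(2^2\OO_2))$ such that $2^s3\cdot C^t M_\E C$ is block-diagonal with upper block exactly $J$ and lower entry some $\gamma\in\OO_2$. Taking determinants and invoking condition~(b) gives $3\gamma=2^{s+1}\,3\,\det(C)^2$, hence $\gamma=2^{s+1}\det(C)^2$; since the $(3,3)$ slot is decoupled from the block, rescaling only the third basis vector by $\det(C)^{-1}$ (a unit $\equiv1\bmod 4$, hence harmless for the congruences below) makes the lower entry exactly $2^{s+1}$. By the remark extending Remark~\ref{rmk:dualidad-recipr} to non-diagonal forms, this is precisely the Gram matrix $2^s3\cdot M_{\tilde\E}$ of a \emph{good} basis; so, setting $\tilde f_i=\sum_j \tilde c_{ji}f_j$ and $\tilde f_0=f_0$ as in the proof of Proposition~\ref{prop:adj=>quasidist}, the basis $\tilde\E^\dagger$ is good. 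Finally I would transfer the congruence: from $C\equiv I \bmod 2$ one gets $\tilde f_i\equiv f_i \bmod (2\OO_2^\vee)$ for $1\leq i\leq3$, and multiplying by $2^s3$ and forming the products $f_j\bar f_k$, the analogue in this setting of the inclusion $4ab\cdot\O_\pp^\vee\O_\pp^\vee\subseteq\O_\pp$ from \cite{brz-gor} gives $\tilde e_i\equiv e_i \bmod(2\O_2)$, whence $\E^\dagger$ is quasi-good.

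The main obstacle is the first step. Unlike the hyperbolic case, where the two diagonal entries of the block vanish and the off-diagonal unit can be normalized to $1$ by a single rescaling of the first vector at the very end, the block $J$ is anisotropic with nonzero diagonal entries: rescaling the first vector to fix the off-diagonal entry at $1$ would simultaneously move the diagonal entry away from $2$. Consequently the lifting lemma must be set up to drive the off-diagonal entry to its exact target value $1$ while pinning both diagonal entries at $2$, by choosing $C_0$ so that the correction $2^{m-1}(C_0^tAC+C^tAC_0)$ cancels all three block errors at once. Verifying that such a $C_0$ exists at each stage, and tracking exactly how many $2$-adic digits are lost per step so that the hypothesis $m\geq3$ suffices, is the delicate bookkeeping; everything downstream is then formally identical to the proofs of Propositions~\ref{prop:adj=>quasidist} and~\ref{prop:adj=>quasidist_H}.
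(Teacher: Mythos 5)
The paper itself offers no argument here: it explicitly says that for orders of class A2 it ``only state[s] the corresponding analogue'' of Proposition~\ref{prop:adj=>quasidist_H}, so there is no proof of record to compare yours against and the proposal must stand on its own. Its architecture --- a $J$-adapted lifting lemma in the style of Lemma~\ref{lemma:levantamientoH}, iteration with $m\to\infty$, a determinant computation using (b) to pin the $(3,3)$ entry at $2^{s+1}$ after a unit rescaling, and the transfer of the congruence through the products $f_j\bar f_k$ --- is surely the intended one, and the determinant step ($\gamma=2^{s+1}\det(C)^2$) checks out.

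There is, however, a concrete obstruction at exactly the step you flag as ``delicate bookkeeping'', and it is not bookkeeping. You take hypothesis (a) at face value, starting from $A=2^s3\cdot M_{\E}\equiv\left(\begin{smallmatrix}0&1&0\\1&0&0\\0&0&\beta\end{smallmatrix}\right)\bmod{M_3(2^3\OO_2)}$, and claim the iteration yields $C\equiv I\bmod{(M_3(2^2\OO_2))}$ with upper block of $C^tAC$ \emph{exactly} $J=\left(\begin{smallmatrix}2&1\\1&2\end{smallmatrix}\right)$. This is unreachable: writing $C=I+2D$ and using the symmetry of $A$, the $(1,1)$ entry of $C^tAC$ equals $A_{11}+4\bigl(\sum_kA_{1k}D_{k1}+(D^tAD)_{11}\bigr)$, so the diagonal entries of $C^tAC$ are congruent mod $4$ to those of $A$; they stay $\equiv0$ and can never become $2$. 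In fact hypothesis (a) as printed is never satisfied by an order of class A2: by the remark extending Remark~\ref{rmk:dualidad-recipr}, $2^s3\cdot M_{\E}$ is a Gram matrix of $J\perp\<2^{s+1}>$ with $s\geq1$, and for that form the conditions $G_{11}\equiv G_{22}\equiv0\bmod4$ force the first two coordinates of the corresponding basis vectors to be even, which makes $G_{12}\equiv0\bmod4$ rather than a unit. So condition (a) must be read with the block $\left(\begin{smallmatrix}2&1&0\\1&2&0\\0&0&\beta\end{smallmatrix}\right)$ (the printed matrix is evidently carried over from the $H$ case), and your lifting lemma has to be formulated with that block as both source and target: the diagonal entries $2$ are then automatically rigid mod $4$, the unimodular block $\left(\begin{smallmatrix}2&1\\1&2\end{smallmatrix}\right)$ serves as pivot exactly as the hyperbolic block does in Lemma~\ref{lemma:levantamientoH}, and the remainder of your plan (determinant normalization, $\tilde f_i\equiv f_i\bmod 2\O_2^\vee$, hence $\tilde e_i\equiv e_i\bmod 2\O_2$) goes through as in Proposition~\ref{prop:adj=>quasidist}. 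As written, though, the proposal asserts an impossible intermediate state, so the gap is real.
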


\medskip

Finally, we proceed to give systems of representatives for the quotient sets
$(\O_2')^\times\backslash\O_2^\times$ when $\O_2'$ is a maximal suborder of
$\O_2$ obtained using Algorithm \ref{algo:suborder}. We start stating three
general results which, though stated and used only when $\pp=(2)$, hold without
restrictions on $\pp$.

Let $\B=\{1,e_1,e_2,e_3\}$ be a good basis for $\O_2$. Let $q$ be the
order of the residue field $k_2$, and let
$a_1,a_2,\dots,a_q\in\OO_2$ be a set of representatives for $k_2$.

\begin{prop}\label{prop:general_1}

Suppose that $\O_2$ is in correspondence with the form $f=\< 1 , a , b >$, and
let $\lambda\in \OO_2$. Assume that there exist $\alpha_0,\alpha_3\in \OO_2$
such that $\alpha_0^2+a\alpha_3^2=\lambda$. Let $v=\alpha_0+\alpha_3 e_3$, and
let $d_1= v e_1, d_2= v e_2, d_3=e_3$.

Then, $\O_2'=\< 1,d_1,d_2,d_3>_{\OO_2}$ is a suborder of $\O_2$ in
correspondence with the form $g=\< 1 , a , \lambda b >$, of which
$\{1,d_1,d_2,d_3\}$ is a good basis. Furthermore, if $v_2(\lambda)=1$ and
$v_2(b)\geq1$, then $\O_2'$ is a maximal suborder of $\O_2$, the index of
$(\O_2')^\times$ in $\O_2^\times$ is $q$, and a set of representatives for the
set $(\O_2')^\times\backslash\O_2^\times$ is given by $\{1+ a_i e_2 :1\leq i
\leq q\}$.
\end{prop}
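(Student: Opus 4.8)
The proof splits into a purely algebraic verification (that $\{1,d_1,d_2,d_3\}$ is a good basis for $g$, giving the suborder and its maximality) and a unit computation; the latter is where the hypotheses $v_2(\lambda)=1$ and $v_2(b)\ge 1$ enter, and is the only delicate point.

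First I would record the two facts that drive everything. Since $v=\alpha_0+\alpha_3 e_3$ lies in the commutative subalgebra $\OO_2[e_3]$, on which conjugation sends $e_3$ to $-e_3$, we have $N(v)=v\bar v=\alpha_0^2+a\alpha_3^2=\lambda$ (using $e_3^2=-a$); and from \eqref{eqn:tabla_mult} one reads off $e_ie_3=-e_3e_i$ for $i=1,2$, whence $e_iv=\bar v e_i$ for $i=1,2$ and $e_3v=ve_3$. With these, every product $d_id_j$ collapses: for instance $d_1^2=v(e_1v)e_1=v\bar v\,e_1^2=\lambda(-ab)=-a(\lambda b)$, $d_1d_2=v\bar v\,e_1e_2=-(\lambda b)d_3$, $d_2d_3=v(e_2e_3)=-d_1$, $d_3d_1=v(e_3e_1)=-ad_2$, and similarly for the rest. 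Checking all nine shows $\{1,d_1,d_2,d_3\}$ satisfies \eqref{eqn:tabla_mult} for $g=\langle 1,a,\lambda b\rangle$; since each $d_i\in\O_2$, the span $\O_2'$ is a suborder corresponding to $g$ with this good basis. Expanding $d_1=\alpha_0e_1-a\alpha_3e_2$, $d_2=\alpha_3e_1+\alpha_0e_2$, $d_3=e_3$, the change-of-basis matrix has determinant $\lambda$, so $[\O_2:\O_2']=(\lambda)=\pp$, and an index-$\pp$ suborder is maximal by Proposition \ref{prop:e=1_o_2}.

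For the units I would first check $\pp\O_2\subseteq\O_2'$: inverting the above matrix gives $e_i\in\lambda^{-1}\O_2'$, and since $2/\lambda\in\OO_2^\times$ we get $2e_1,2e_2\in\O_2'$, hence $\pp\O_2\subseteq\O_2'$. Proposition \ref{prop:unid_mod_p} then identifies $(\O_2')^\times\backslash\O_2^\times$ with $\bar{\O'}^\times\backslash\bar\O^\times$, where $\bar\O=\O_2/\pp\O_2$ and $\bar{\O'}=\O_2'/\pp\O_2$. The crucial structural remark is that, because $v_\pp(b)\ge 1$, the reduction of the norm form \eqref{eqn:norma} is $N(x)\equiv x_0^2+\bar a x_3^2$, involving only the coordinates of $1$ and $\bar e_3$; thus in $\bar\O=(k_2\cdot 1\oplus k_2\bar e_3)\oplus(k_2\bar e_1\oplus k_2\bar e_2)$ the unit condition $N\neq 0$ ignores the second summand. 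Moreover $\lambda\equiv 0$ while $\alpha_0,\alpha_3$ are not both $\equiv 0$ (else $v_\pp(\lambda)\ge 2$), so $\bar d_1,\bar d_2$ span a single line $k_2\bar w\subseteq\langle\bar e_1,\bar e_2\rangle$ whose $\bar e_1$-coordinate is nonzero; hence $\bar{\O'}=k_2\cdot 1\oplus k_2\bar e_3\oplus k_2\bar w$ has a line where $\bar\O$ has a plane, and counting units gives $[\bar\O^\times:\bar{\O'}^\times]=q^2/q=q$.

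Finally the representatives: each $1+a_ie_2$ is a unit because $N(1+a_ie_2)=1+a_i^2 b\equiv 1\pmod\pp$, and a direct computation gives $(1+a_je_2)(1+a_ie_2)^{-1}\equiv 1+(a_j-a_i)\bar e_2\pmod{\pp\O_2}$. By the identification above this product lies in $(\O_2')^\times$ iff $(a_j-a_i)\bar e_2\in k_2\bar w$; since $\bar w\not\parallel\bar e_2$, this happens iff $a_i\equiv a_j$. Thus the $q$ elements $1+a_ie_2$ are pairwise inequivalent, and as there are exactly $q$ cosets they form the claimed system of representatives. I expect the main obstacle to be precisely the structural observation of the previous paragraph: recognizing that reducing mod $\pp$ kills $b$ and collapses the norm form onto $\langle 1,e_3\rangle$, so that $\bar\O$ and $\bar{\O'}$ differ only in the norm-trivial directions (plane versus line) and $\bar e_2$ becomes the transversal coordinate. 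Everything else is routine manipulation with the multiplication table.
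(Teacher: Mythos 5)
Your proof is correct and follows essentially the same route as the paper's: a direct verification of the multiplication table for the first claim, followed by reduction modulo $\pp$ via Proposition \ref{prop:unid_mod_p}, the rank-one observation on the $(\bar e_1,\bar e_2)$-plane to get $[\O_2^\times:(\O_2')^\times]=q$, and a direct inequivalence check for the listed representatives. The only quibble is the citation of Proposition \ref{prop:e=1_o_2} for ``index $\pp$ implies maximal'' --- that proposition states the converse --- but the implication you need is immediate from multiplicativity of the index, so this is cosmetic.
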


\begin{proof}
The first assertion is easily checked. To prove the second one, we use
Proposition \ref{prop:unid_mod_p}. Since $v_2(b)\geq1$, by
(\ref{eqn:norma}) the norm form on $2 \O_2\backslash\O_2$ is given by
$N(x)=x_0^2+a x_3^2$. Hence, $\vert(2
\O_2\backslash \O_2)^\times\vert=c\cdot q^2$, where $c=\#\{(x_0,x_3)\in
k_2:x_0^2+a x_3^2\neq 0\}$.

We have that
\[
 2 \O_2\backslash\O_2'=\big\{x\in  2 \O_2\backslash\O_2:x_0,x_3\in
k_2,(x_1,x_2)\in A(k_2)\big\},
\]
where $A\in\End_{ k_2}( k_2\times k_2)$ is the morphism given by left
multiplication by $\big(\begin{smallmatrix} \alpha_0 & \alpha_3
  \\ -\alpha_3 & \alpha_0
             \end{smallmatrix}
\big)$. Since $\alpha_0^2+a \alpha_3^2=\lambda$ and
$v_2(\lambda)=1$, this matrix has rank $1$. Hence,
$\vert(2 \O_2\backslash \O_2')^\times\vert=c\cdot q$, which shows that
$[\O_2^\times:(\O_2')^\times]=q$.

To see that the given units are not equivalent, take $x\in
(2 \O_2\backslash\O_2')^\times$. Then, it is easy to see that
\begin{align*}
 (1+a_i e_2)x &=x_0+(x_1-a_i x_2 x_3)e_1+(a_i x_0+x_2)e_2 + x_3 e_3 \\
&= 1+a_j e_2
\end{align*}
implies that $i=j$.

\end{proof}

The next two results can be proved following the same ideas as the ones used
above.

\begin{prop}\label{prop:general_2}
Suppose that $\O_2$ is in correspondence with the form $f=\< 1 , a , b >$, and
let $\mu\in \OO_2$. Assume that there exist $\alpha_0,\alpha_2\in \OO_2$ such
that $\alpha_0^2+b\alpha_2^2=\mu$. Let $v=\alpha_0+\alpha_2 e_2$, and let $d_1=
v e_1, d_2= e_2, d_3=v e_3$.

Then, $\O_2'=\< 1,d_1,d_2,d_3>_{\OO_2}$ is a suborder of $\O_2$ in
correspondence with the form $g=\< 1 , \mu a , b >$, of which
$\{1,d_1,d_2,d_3\}$ is a good basis. Furthermore, if $v_2(\mu)=1$ and
$v_2(b)\geq1$, then $\O_2'$ is a maximal suborder of $\O_2$, the index of
$(\O_2')^\times$ in $\O_2^\times$ is $q$, and a set of representatives for the
set $(\O_2')^\times\backslash\O_2^\times$ is given by $\{1+ a_i e_3 :1\leq i
\leq q\}$.

\end{prop}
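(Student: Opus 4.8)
The plan is to follow the proof of Proposition \ref{prop:general_1} almost verbatim, interchanging the roles of $e_2$ and $e_3$ (and of $a$ and $b$ in the form). First I would dispatch the initial assertion by a direct computation. Writing $v=\alpha_0+\alpha_2 e_2$ and using the multiplication rules \eqref{eqn:tabla_mult} for $f=\<1,a,b>$, one gets $d_1=ve_1=\alpha_0 e_1+\alpha_2 b\,e_3$ and $d_3=ve_3=-\alpha_2 e_1+\alpha_0 e_3$, and then $d_1^2=-ab(\alpha_0^2+b\alpha_2^2)=-ab\mu$, $d_3^2=-a(\alpha_0^2+b\alpha_2^2)=-a\mu$, together with $d_2^2=e_2^2=-b$ and the analogous relations for the cross products. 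These are exactly the equations \eqref{eqn:tabla_mult} attached to $g=\<1,\mu a,b>$, so $\{1,d_1,d_2,d_3\}$ is a good basis and, by \eqref{eqn:norma}, the form associated to $\O_2'$ is $g$. The only identity used here is $\alpha_0^2+b\alpha_2^2=\mu$.

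For the second assertion I would assume $v_2(\mu)=1$ and $v_2(b)\geq1$. A valuation count on $\mu=\alpha_0^2+b\alpha_2^2$ forces $v_2(b)=1$, $\alpha_2\in\OO_2^\times$ and $\alpha_0\in 2\OO_2$; in particular $2\O_2\subseteq\O_2'$, so Proposition \ref{prop:unid_mod_p} reduces the problem to the finite quotient $(2\O_2\backslash\O_2')^\times\backslash(2\O_2\backslash\O_2)^\times$. Reducing the good basis modulo $2\O_2$ and using these congruences, the image of $d_1$ becomes $0$ and that of $d_3$ a unit multiple of $e_1$, so the image of $\O_2'$ in $2\O_2\backslash\O_2$ is the hyperplane spanned by $1,e_1,e_2$, that is $\{x_3=0\}$. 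Pinning down which hyperplane this is — equivalently, that the cokernel lies in the $e_3$-direction — is the crux of the argument, since it dictates the correct choice of representatives.

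It then remains to count. By \eqref{eqn:norma} and $v_2(b)\geq1$, the norm on $2\O_2\backslash\O_2$ reduces to $N(x)\equiv x_0^2+a\,x_3^2$. Here I would invoke that $k_2$ has characteristic $2$: the Frobenius being surjective, $x_0^2+a\,x_3^2$ is a perfect square, hence vanishes on exactly $q$ pairs $(x_0,x_3)$, giving $\#(2\O_2\backslash\O_2)^\times=(q^2-q)q^2$. On the hyperplane $x_3=0$ the norm is $x_0^2$, so $\#(2\O_2\backslash\O_2')^\times=(q-1)q^2$, and the index is $q$; this also yields maximality through Proposition \ref{prop:e=1_o_2}. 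Finally the $q$ elements $1+a_i e_3$ are units, their reduced norm being $1+a\,a_i^2$ (a unit, as $a$ is even for the forms to which the proposition is applied), and the direct product $(1+a_i e_3)x=x_0+(x_1+a_i x_2)e_1+(x_2-a\,a_i x_1)e_2+a_i x_0 e_3$ for $x\in(2\O_2\backslash\O_2')^\times$ shows that $(1+a_i e_3)x=1+a_j e_3$ forces $x_0=1$ and hence $a_i=a_j$; so the representatives are pairwise inequivalent.

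The step I expect to be delicate is the passage to the residue ring: one must track carefully that $\alpha_0$ reduces to $0$ and $\alpha_2$ to a unit modulo $2$ in order to locate the image of $\O_2'$ and thus the cokernel direction. Closely related is the use of the characteristic-$2$ identity turning $x_0^2+a\,x_3^2$ into a square; it is precisely this feature — absent when the residue characteristic is odd, where the same count would instead produce $q\pm1$ — that makes the index come out equal to $q$.
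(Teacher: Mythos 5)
Your route is the one the paper intends: it proves Proposition \ref{prop:general_1} in detail and then declares that Propositions \ref{prop:general_2} and \ref{prop:general_3} ``can be proved following the same ideas'', so transposing that argument with the roles of $e_2,e_3$ (and of $a,b$) interchanged is exactly right. Your verification of the multiplication table for $g=\<1,\mu a,b>$, the valuation analysis forcing $v_2(b)=1$, $\alpha_0\in 2\OO_2$ and $\alpha_2\in\OO_2^\times$, the identification of $2\O_2\backslash\O_2'$ with the hyperplane $x_3=0$, the count $\#(2\O_2\backslash\O_2)^\times=(q^2-q)q^2$ via the reduced norm $x_0^2+ax_3^2$, and the inequivalence computation are all correct, and in places more explicit than the model proof, which does not pin down the hyperplane but works directly with a rank-one $2\times 2$ matrix.

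The one genuine gap is the parenthetical by which you dispose of the unit-ness of the representatives. You assert that $N(1+a_ie_3)=1+aa_i^2$ is a unit ``as $a$ is even for the forms to which the proposition is applied''. This is not a hypothesis of the proposition ($v_2(\mu)=1$ and $v_2(b)\geq 1$ impose nothing on $a$), and it is false in the one place the paper invokes this clause: the descent from class B to class C, where $f=\<1,1,\delta_1 2^s>$ has $a=1$. When $a\in\OO_2^\times$, the unique $a_i$ with $a_i^2\equiv a^{-1}\pmod{2}$ (namely $a_1=1$ when $a=1$) gives $N(1+a_ie_3)\equiv(1+\sqrt{a}\,a_i)^2\equiv 0$, so that $1+a_ie_3$ is a zero divisor in $2\O_2\backslash\O_2$ and does not lie in $\O_2^\times$ at all; it must be replaced, e.g.\ by $e_3$ itself, which is a unit of norm $a$ lying outside the hyperplane $x_3=0$ --- precisely the adjustment the paper makes in the B-to-B ($s=0$) and A2-to-B rows of Table \ref{table:repr_unidades2}. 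So your argument is complete only under the additional hypothesis $v_2(a)\geq 1$; as written, the step ``the representatives are units'' fails, and in failing it exposes that the stated representative set (and with it the B-to-C row of Table \ref{table:repr_unidades2}) is itself too strong as it stands.
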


\begin{prop}\label{prop:general_3}
Suppose that $\O_2$ is in correspondence with the form $f=\< 1 , a , b >$. Let
$a',b'\in \OO_2$. Assume that there exist $\alpha_1,\alpha_2,\alpha_3\in \OO_2$
such that $ab\alpha_1^2=b'$, and $a\alpha_3^2+ b\alpha_2^2=a'$. Let $d_2=
\alpha_1 e_1, d_3= \alpha_2 e_2+\alpha_3 e_3, d_1=d_3 d_2$.

Then, $\O_2'=\< 1,d_1,d_2,d_3>_{\OO_2}$ is a suborder of $\O_2$ in
correspondence with the form $g=\< 1 , a' , b' >$, of which $\{1,d_1,d_2,d_3\}$
is a good basis. Furthermore, if $v_2(b')=v_2(b)+1,v_2(a)=v_2(a')=1$ and
$v_2(b)\geq1$, then $\O_2'$ is a maximal suborder of $\O_2$, the index of
$(\O_2')^\times$ in $\O_2^\times$ is $q$, and a set of representatives for the
set $(\O_2')^\times\backslash\O_2^\times$ is given by $\{1+ a_i e_3 :1\leq i
\leq q\}$.

\end{prop}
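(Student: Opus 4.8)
The plan is to follow the strategy of the proof of Proposition \ref{prop:general_1}, separating the algebraic claim (that $\{1,d_1,d_2,d_3\}$ is a good basis attached to $g$) from the count of units modulo $\pp$.

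First I would verify the table \eqref{eqn:tabla_mult} for $g=\<1,a',b'>$ by substituting the relations satisfied by the $e_i$. Since $e_2e_3+e_3e_2=0$, one gets $d_3^2=\alpha_2^2e_2^2+\alpha_3^2e_3^2=-(b\alpha_2^2+a\alpha_3^2)=-a'$, while $d_2^2=\alpha_1^2e_1^2=-ab\alpha_1^2=-b'$; the relation $d_3d_2=d_1$ holds by definition, and expanding $d_1=d_3d_2=-a\alpha_1\alpha_3e_2+b\alpha_1\alpha_2e_3$ lets me check the remaining products, for instance $d_1^2=-ab\alpha_1^2(a\alpha_3^2+b\alpha_2^2)=-a'b'$ and $d_3d_1=-(a\alpha_3^2+b\alpha_2^2)\alpha_1e_1=-a'd_2$. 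This is mechanical once the relations for $f$ are inserted, and proves the first assertion. For maximality I would compare discriminants: in a good basis the Gram matrix of $\tr(x\bar y)$ is $\diag(2,2ab,2b,2a)$, with determinant $16(ab)^2$, so $v_2(d(\O_2'))-v_2(d(\O_2))=v_2(a'b')-v_2(ab)=1$ under the valuation hypotheses; hence $[\O_2:\O_2']=\pp$, the order $\O_2'$ is maximal, and $\pp\O_2=2\O_2\subseteq\O_2'$.

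This last inclusion lets me invoke Proposition \ref{prop:unid_mod_p} to replace $(\O_2')^\times\backslash\O_2^\times$ by $(2\O_2\backslash\O_2')^\times\backslash(2\O_2\backslash\O_2)^\times$. By \eqref{eqn:norma} and the hypotheses $v_2(a)=1$, $v_2(b)\geq1$, the reduced norm on $2\O_2\backslash\O_2$ is $N(x)=x_0^2$, so $(2\O_2\backslash\O_2)^\times=\{x_0\neq0\}$ has $(q-1)q^3$ elements. In the quotient, $\bar d_1=0$ (both coefficients of $d_1$ have positive valuation) and $\bar d_2=\bar\alpha_1e_1$ with $\alpha_1\in\OO_2^\times$, so $2\O_2\backslash\O_2'=\la 1,e_1,\bar d_3\ra$, a three-dimensional $k_2$-space whose units $\{x_0\neq0\}$ number $(q-1)q^2$; dividing gives $[\O_2^\times:(\O_2')^\times]=q$.

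Finally, the $1+a_ie_3$ are units, as $N(1+a_ie_3)=1+aa_i^2\equiv1\pmod\pp$, and there are $q$ of them, matching the index; so it remains to prove they are pairwise inequivalent. Mirroring Proposition \ref{prop:general_1}, I would take $x\in2\O_2\backslash\O_2'$ and compute, using $a\equiv0$, that $(1+a_ie_3)x\equiv x_0+(x_1+a_ix_2)e_1+x_2e_2+(x_3+a_ix_0)e_3$; imposing $(1+a_ie_3)x=1+a_je_3$ forces $x_0=1$ and $x_2=0$. The hard part is precisely here: one must use that $x\in2\O_2\backslash\O_2'=\la1,e_1,\bar d_3\ra$ has its $(e_2,e_3)$-coordinates proportional to $(\bar\alpha_2,\bar\alpha_3)$, so that when $\alpha_2\in\OO_2^\times$ the condition $x_2=0$ upgrades to $x_3=0$, giving $a_i=a_j$ and $i=j$. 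Identifying the subalgebra $2\O_2\backslash\O_2'$ and tracking the $e_2$-coordinate (which relies on $\bar d_3$ carrying a nonzero $e_2$-component, as happens in every application in Table \ref{table:suborders2}) is the only delicate point; the good-basis verification and the index computation are routine.
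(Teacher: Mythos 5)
Your proposal matches the paper's intended argument: the paper itself only says that Proposition \ref{prop:general_3} ``can be proved following the same ideas'' as Proposition \ref{prop:general_1}, and that is exactly what you carry out. Your verification of the multiplication table \eqref{eqn:tabla_mult} for $g$, the discriminant computation giving $[\O_2:\O_2']=\pp$ (hence maximality and $2\O_2\subseteq\O_2'$), and the count $[\O_2^\times:(\O_2')^\times]=q$ via Proposition \ref{prop:unid_mod_p} are all correct. The point you flag at the end is a genuine one and not just a presentational worry: the stated hypotheses only force that \emph{one} of $\alpha_2,\alpha_3$ is a unit (if both had positive valuation then $v_2(a\alpha_3^2+b\alpha_2^2)\geq 3>v_2(a')$), and if $\bar\alpha_2=0$ then $2\O_2\backslash\O_2'=\langle 1,e_1,e_3\rangle_{k_2}$, so every $1+a_ie_3$ reduces into $(2\O_2\backslash\O_2')^\times$ and the listed elements all represent a single coset (the correct representatives would then be the $1+a_ie_2$). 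So the proposition as literally stated requires $\alpha_2\in\OO_2^\times$ as an additional (implicit) hypothesis; as you observe, this holds with $\alpha_2=1$ in each of the three rows of Table \ref{table:suborders2} to which the proposition is applied, so none of the paper's conclusions are affected. Apart from making that hypothesis explicit, your proof is complete.
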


Assume that the given system of representatives for $k_2$ is such that $a_1=1$,
and that $a_{q-1}$ and $a_q$ are the two solutions in $k_2$ of $t^2+t+1=0$, when
$q=2^s$ with even $s$.

\begin{prop}\label{prop:repr_unidades2}
Let $\B=\{1,e_1,e_2,e_3\}$ be a quasi-good basis of $\O_2$, and assume that
$\O_2'$ is a maximal suborder of $\O_2$ that has been built using Algorithm
\ref{algo:suborder}. Then, Table \ref{table:repr_unidades2} gives the index of
$(\O_2')^\times$ in $\O_2^\times$ and a system of representatives for the
quotient set.

\begin{table}[h]
\scalebox{0.74}{
\begin{tabular}{|c|c|c|c|c|}
\hline
$\O_2$-class & $\O_2'$-class & $[\O_2^\times:(\O_2')^\times]$ & Representatives
& Condition\\
\hline\hline
\multirow{6}{*}{A1} & \multirow{2}{*}{A1} & $q+1$ & $e_1+e_2, 1 + a_i
e_2\quad (1\leq i \leq q)$ & $s=0$\\
\cline{3-5}
&  & $q$ & $1 + a_i e_2 \quad (1\leq i \leq q)$ & $s\geq1$\\
\cline{2-5}
 & \multirow{2}{*}{A2} & $q(q-1)$ & $(1+a_i e_2)(e_1 +a_j e_2 )\quad (1\leq
i,j\leq q,
a_j\neq0)$ & $r$ odd\\
\cline{3-5}
 &  & $q(q+1)$ & $(1+a_i e_2)(e_1 +a_j e_2 )\quad (1\leq i,j\leq q, a_j\neq0)$,
& $r$ even\\
& & & $(1+a_i e_2)(a_j+e_1)\quad(1\leq i \leq q,q-2\leq j \leq q)$  & \\
\cline{2-5}
 & $B$ & $q-1$ & $1+a_i e_2 \quad (1< i \leq q)$& \\

\hline

\multirow{3}{*}{A2} & A2 & $q^2$ & $1 + a_i e_1 + a_j e_2 \quad (1\leq i,j
\leq q)$ &\\
\cline{2-5}
 & \multirow{2}{*}{B} & $q-1$ & $e_3,1+a_i e_3 \quad (1\leq i\leq q-2)$&
$r$ even\\
\cline{3-5}
 &  & $q+1$ & $e_3,1+a_i e_3 \quad (1\leq i\leq q)$& $r$ odd\\

\hline
\multirow{4}{*}{B} & \multirow{2}{*}{B} &  \multirow{2}{*}{$q$} & $e_2,1+a_i
e_2 \quad (1< i\leq q)$ & $s=0$\\
\cline{4-5}
 & & & $1+a_i e_2  \quad (1\leq i\leq q)$ & $s\geq1$\\
\cline{2-5}
 & C & $q$ & $1+a_i e_3  \quad (1\leq i\leq q)$ & \\
\cline{2-5}
 & D & $q$ & $1+a_i e_2  \quad (1\leq i\leq q)$ & \\
\hline

\multirow{5}{*}{C} & C & $q$ & $1,a_i+ e_2 \quad (1\leq i\leq q)$&  \\

\cline{2-5}

 & \multirow{2}{*}{E} & \multirow{2}{*}{$q$} & $1,a_i+ e_2 \quad (1\leq i\leq
q)$& $\delta_1=1$ \\
\cline{4-5}

 & & & $1,a_i+ e_3 \quad (1\leq i\leq q)$& $\delta_1=3$ \\
\cline{2-5}

 & \multirow{2}{*}{F} & \multirow{2}{*}{$q$} & $1,a_i+ e_2 \quad (1\leq i\leq
q)$& $\delta_1=1$ \\
\cline{4-5}

 &  &  & $1,a_i+ e_3 \quad (1\leq i\leq q)$& $\delta_1=3$ \\

\hline

D & D & $q$ & $1,a_i+ e_2 \quad (1\leq i\leq q)$&  \\

\hline

\multirow{2}{*}{E} & E & $q$ & $1,a_i+ e_2 \quad (1\leq i\leq q)$&  \\

\cline{2-5}

& G & $q$ & $1,a_i+ e_3 \quad (1\leq i\leq q)$&  \\

\hline

F & F & $q$ & $1,a_i+ e_2 \quad (1\leq i\leq q)$&  \\

\hline

G & G & $q$ & $1,a_i+ e_2 \quad (1\leq i\leq q)$&  \\

\hline

\end{tabular}}
\caption{$[\O_2^\times:(\O_2')^\times]$ and representatives for
$(\O_2')^\times\backslash\O_2^\times$}\label{table:repr_unidades2}
\end{table}

\end{prop}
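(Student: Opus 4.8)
The plan is to follow the proof of Proposition \ref{prop:repr_unidades} verbatim, reducing the whole statement to a computation modulo $2$. Since $\O_2'$ is a maximal suborder of $\O_2$, the quotient $\O_2/\O_2'$ is annihilated by $2$, so $2\O_2\subseteq\O_2'$ and Proposition \ref{prop:unid_mod_p} yields a bijection
\[
(\O_2')^\times\backslash\O_2^\times \stackrel{\sim}{\longrightarrow} (2\O_2\backslash\O_2')^\times\backslash(2\O_2\backslash\O_2)^\times .
\]
It therefore suffices to work inside the finite $k_2$-algebra $2\O_2\backslash\O_2$, whose units are exactly the elements of nonzero reduced norm. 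The first step is then, for each class of $\O_2$ in Table \ref{table:qforms2}, to reduce the corresponding norm form \eqref{eqn:norma} or \eqref{eqn:norma2} modulo $2$ and read off $(2\O_2\backslash\O_2)^\times$ in the coordinates supplied by the good basis $\B$.

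A large portion of the rows of Table \ref{table:repr_unidades2} — those coming from descents between the diagonal classes B through G — follow at once from the general Propositions \ref{prop:general_1}, \ref{prop:general_2} and \ref{prop:general_3}. Each such descent in Table \ref{table:suborders2} matches one of the templates $g=\<1,a,\lambda b>$, $g=\<1,\mu a,b>$ or $g=\<1,a',b'>$, and in the maximal-suborder regime the relevant valuation hypotheses ($v_2(\lambda)=1$, $v_2(\mu)=1$, or $v_2(b')=v_2(b)+1$ with $v_2(a)=v_2(a')=1$) are met; the cited propositions then assert that the index is $q$ and that $\{1+a_ie_2\}$ or $\{1+a_ie_3\}$ is a complete set of representatives. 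I would simply tabulate, for each descent, which proposition applies and with which choice of the auxiliary elements $\mu_j$, thereby recording the corresponding entry of the table. This is exactly the link already anticipated in the proof of Proposition \ref{prop:suborders2}.

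The remaining and essential work concerns the non-diagonal classes A1 and A2, with forms $H\perp\<2^s>$ and $J\perp\<2^s>$, to which the general propositions do not apply, together with the handful of exceptional low-level descents. For these I would argue directly, just as in the single case spelled out in the proof of Proposition \ref{prop:repr_unidades}: using \eqref{eqn:norma2}, compute $N\bmod 2$; describe $(2\O_2\backslash\O_2)^\times$ and the subalgebra $2\O_2\backslash\O_2'$, spanned over $k_2$ by the reductions of $1,d_1,d_2,d_3$ from Table \ref{table:suborders2}; verify that each listed representative is a unit and that no two of them are equivalent under left multiplication by $(2\O_2\backslash\O_2')^\times$; and count them against the claimed index. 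This is where the parity conditions on $r$ (respectively $s$) enter, since the size of $(2\O_2\backslash\O_2)^\times$, and hence both the index and the precise list, depends on whether the relevant parameter is even or odd; when $q=2^s$ with $s$ even this is what forces the representatives $a_{q-1},a_q$ solving $t^2+t+1=0$ to appear.

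The main obstacle will be precisely this A1/A2 analysis. The mod-$2$ norm forms are degenerate and non-diagonal, there are several subcases (A1$\to$A1, A1$\to$A2, A1$\to$B, A2$\to$A2, A2$\to$B), and one must track carefully the module structure induced on $\O_2/\O_2'$ — over $k_2$ when the index is $q$, and over its quadratic extension when the index is $q^2$ — in order to count units correctly across the even/odd dichotomy. I expect each individual verification to be a routine but delicate finite computation, so the principal effort lies in organizing these uniformly rather than in any single calculation.
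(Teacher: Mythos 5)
Your proposal follows the paper's proof essentially verbatim: reduce to the quotient $2\O_2\backslash\O_2$ via Proposition \ref{prop:unid_mod_p}, dispatch the diagonal-class descents (B through G) with Propositions \ref{prop:general_1}, \ref{prop:general_2} and \ref{prop:general_3} (including chaining them for composite cases like B to D), and handle the non-diagonal A1/A2 cases by direct computation of the mod-$2$ norm form, the reduction of $\O_2'$, and a unit count against the claimed index. This is exactly the structure of the paper's argument, which likewise works out one representative non-diagonal case (A2 to B) in detail and treats the rest similarly.
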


\begin{proof}

As in the $\pp\nmid2$ case, by Proposition \ref{prop:unid_mod_p}, we may assume
that $\B$ is a good basis for $\O_2$, as well as we may perform all calculations
modulo $2\O_2$.

The cases B to B, C to C, D to D, E to E, F to F and G to G are covered by
Proposition \ref{prop:general_1}. The case B to C is covered by Proposition
\ref{prop:general_2}.

To prove the case B to D, use Proposition \ref{prop:general_2} to descend from
$\< 1 , 1 , 2^2 > $ to $\< 1 , 5 , 2^2> $, and Proposition \ref{prop:general_1}
to descend from this form to $\< 1 , 5 , 3\cdot 2^3 > $. A similar argument
works for the other form of class B.

The cases  C to E (with $\delta_1=3$), C to F (with $\delta_1=3$) and E to G are
covered by Proposition \ref{prop:general_3}.

Now we will prove the case from A2 to B. The remaining cases can be treated in
a similar way, with no further difficulties. 

By (\ref{eqn:norma2}), the norm form on $2 \O_2\backslash\O_2$ is given by
$N(x)=x_0^2+x_0 x_3+ x_3^2$. Hence, a standard calculation shows that
\[
 \vert (2 \O_2\backslash\O_2)^\times \vert=\begin{cases}
                                              q^4-q^2(2q-1), & \textrm{if } r
\textrm{ is even } \\
q^4-q^2, & \textrm{if } r \textrm{ is odd } 
                                             \end{cases}
\]

Since $d_1 = 1+ e_1, d_2= 1+e_2$ and $d_3= 1+ e_1 + e_2$ in $2
\O_2\backslash\O_2$, we have that $2 \O_2\backslash\O'_2=\< 1,e_1,e_2>_{k_2}$.
Hence $\vert(2 \O_2\backslash\O_2)^\times\vert=q^3-q^2$, and this proves the
equality on $[\O_2^\times:(\O_2')^\times]$.

Now we need to find the right amount of non equivalent units. It is easily seen
that the elements in the set $\{1+a_i e_3:1\leq i\leq q\}\cup\{e_3\}$ are not
mutually equivalent modulo $(2 \O_2\backslash\O_2)^\times$, and they are all
units, except for $1+a_{q-1} e_3$ and $1+a_q e_3$ when $q=2^s$ with even $s$.

\end{proof}

\end{document}